\definecolor{ForestGreen}{rgb}{0.0, 0.27, 0.13}
\definecolor{MidnightBlue}{rgb}{0.1, 0.1, 0.44}
\newcommand{\vertiii}[1]{{\left\vert\kern-0.25ex\left\vert\kern-0.25ex\left\vert #1 
    \right\vert\kern-0.25ex\right\vert\kern-0.25ex\right\vert}}		
\newcommand{\e}{\varepsilon}		
\newcommand{\Ediv}{E_{\varepsilon,\Div}}
\newcommand{\Ecur}{E_{\varepsilon,\Cur}}
\newcommand{\ediv}{e_{\varepsilon,\Div}}
\newcommand{\ecur}{e_{\varepsilon,\Cur}}
\newcommand{\Edivn}{E_{\varepsilon_{n},\Div}} 
\newcommand{\Ecurn}{E_{\varepsilon_{n},\Cur}} 
\newcommand{\grad}{\nabla}		
\newcommand{\R}{\mathbb{R}}		
\newcommand{\Ht}{H^1_T}	
\newcommand{\Hloc}{H_{{\mbox{\scriptsize loc}}}}	
\newcommand{\w}{\rightharpoonup}		
\DeclareMathOperator{\Div}{div}		
\DeclareMathOperator{\Cur}{curl}		
\DeclareMathOperator{\Curj}{\Cur_{j}}   
\newtheorem{thm}{Theorem}[section]
\newtheorem{prop}[thm]{Proposition}
\newtheorem{lem}[thm]{Lemma}
\newtheorem{cor}[thm]{Corollary}
\newtheorem*{assum*}{Assumption}			
\newtheorem{rem}[thm]{Remark}
\numberwithin{equation}{section}
\title{A priori estimates and $\eta-$compactness for anisotropic Ginzburg-Landau minimizers with tangential anchoring}
\date{\today}
\begin{document}

\author[1]{Lia Bronsard\footnote{bronsard@mcmaster.ca }}
\author[2]{Andrew Colinet\footnote{acolinet@mun.ca}}
\author[3,1]{Dominik Stantejsky\footnote{dominik.stantejsky@univ-lorraine.fr}}
\author[1]{Lee van Brussel \footnote{vanbrulw@mcmaster.ca}}
\affil[1]{Department of Mathematics and Statistics, McMaster University, Hamilton, ON L8S 4L8 Canada}
\affil[2]{Department of Computer Science, Memorial University of Newfoundland, St. John's, NL A1C 5S7 Canada}
\affil[3]{Universit\'e de Lorraine, Institut \'Elie Cartan de Lorraine, UMR 7502 CNRS,  54506 Vand\oe uvre-l\`es-Nancy Cedex, France}

\maketitle

\begin{abstract}
We consider minimizers $u_\varepsilon$ of the Ginzburg-Landau energy with quadratic divergence or curl penalization on a simply-connected two-dimensional domain $\Omega$.
On the boundary, strong tangential anchoring is imposed.
We prove a priori estimates for $u_\varepsilon$ in $L^\infty$ uniform in $\varepsilon$ and that the Lipschitz constant of $u_\varepsilon$ blows up like $\varepsilon^{-1}$.
We then deduce compactness for a subsequence that converges to an $\mathbb{S}^1-$valued map with either one interior point defect or two boundary half-defects. We conclude our study with a proof that no boundary vortices can occur in the divergence penalized case.\\
\linebreak
\textbf{Keywords:}  Ginzburg-Landau energy, defects, boundary index, divergence and curl penalization, mixed Dirichlet-Robin boundary condition. \hfill\hfill
\linebreak
\textbf{MSC2020:} 
49K20, 
35B45, 
35B38, 
35E20, 
49S05. 
\end{abstract}

\tableofcontents


\section{Introduction}

In this paper, we consider an energy minimization problem motivated by two-dimensional nematic liquid crystals. 
The energy consists of the Ginzburg-Landau energy with additional elastic distortions, paired with the requirement that energy minimizing configurations adhere to a strong tangential anchoring condition along the boundary.
The primary purpose of this work is to fill a gap in existing results pertaining to the type (and number) of vortices one can expect from this setup in the length scale limit $\e\to 0$.
Results are known in the (classical) case of the Ginzburg-Landau energy with Dirichlet condition \cite{bbh} and strong tangential anchoring \cite{ABC,abv}.
The energy with additional elastic term as considered in this article has been investigated in \cite{colbert2013analysis,gomist} subject to a strong Dirichlet boundary condition.
The main result of this paper is summarized in Theorem~\ref{thm:maindiv}.
As a necessary preliminary, we also prove a priori bounds for minimizers completing the analysis from \cite{bcs24}. \\

The mathematical framework for this problem is as follows. We assume that a two-dimensional sample of nematic liquid crystal occupies a bounded, simply connected region $\Omega\subset\mathbb{R}^2$ with  $C^{3,1}-$boundary $\Gamma:=\partial\Omega$. 
The material is characterized by a relaxed \emph{director field} $u:\Omega\to\mathbb{R}^2$ which assigns to each $x\in\Omega$ a vector representing the preferred molecular alignment. 
The class of admissible configurations we consider is defined by 
\begin{equation*}
\Ht(\Omega)\coloneqq\left\{u\in H^1(\Omega;\mathbb{R}^2):\ \langle u,n\rangle=0\ \mbox{on}\ \Gamma\right\}
\, ,
\end{equation*}
where $\langle u,n\rangle$ is the inner product of $u$ with the outward unit normal vector $n\in\mathbb{S}^1$ to $\Gamma$. 
We also use the shorter notation $u_n \coloneqq\langle u,n\rangle$.
In this way, energy minimizing directors remain parallel to the unit tangent vector $\tau\coloneqq{}n^{\perp}$ to $\Gamma$, with the possibility of orientational change, and therefore permitting the existence of boundary vortices. 
Thus, the problem of boundary vortices amounts to observing the sign of $u_{\tau}\coloneqq\langle u,\tau\rangle$ along $\Gamma$.\\

The two energies studied in this work are of Ginzburg-Landau type with the addition of either a $(\Div u)^2$ or $(\Cur u)^2$ term as a means to penalize molecular \emph{splay} and \emph{bend} respectively:
\begin{equation}\label{def:DivEnergy}
\Ediv(u)\coloneqq\frac{1}{2}\int_{\Omega}\Bigl(|\grad u|^2+k(\Div u)^2+\frac{1}{2\e^2}\bigl(1-|u|^2\bigr)^2\Bigr)\mathrm{d}x
\end{equation}
\begin{equation}\label{def:CurlEnergy}
\Ecur(u)\coloneqq\frac{1}{2}\int_{\Omega}\Bigl(|\grad u|^2+k(\Cur u)^2+\frac{1}{2\e^2}\bigl(1-|u|^2\bigr)^2\Bigr)\mathrm{d}x
\, ,
\end{equation}
where $\e>0$ is the length scale parameter and $k>0$. 
Taking our minimization class to be $\Ht(\Omega)$, we obtain our desired minimization problems
\begin{equation}\label{eq:minprobST}
\inf\left\{\Ediv(u):u\in\Ht(\Omega;\R^{2})\right\},\ \inf\left\{\Ecur(u):u\in\Ht(\Omega;\R^{2})\right\}.
\end{equation}
Associated with these minimization problems are the Euler--Lagrange equations
\begin{equation}\label{eq:ELdivweak}
    \int_{\Omega}\!{}\nabla{}u_{\e}:\nabla{}v
    +k\int_{\Omega}\!{}(\Div u_{\e})(\Div v)
    =\frac{1}{\e^{2}}\int_{\Omega}\!{}
    u_{\e}\cdot{}v(1-|u_{\e}|^{2})
\end{equation}
and
\begin{equation}\label{eq:ELcurlweak}
   \int_{\Omega}\!{}\nabla{}u_{\e}:\nabla{}v
    +k\int_{\Omega}\!{}(\Cur u_{\e})(\Cur v)
    =\frac{1}{\e^{2}}\int_{\Omega}\!{}
    u_{\e}\cdot{}v(1-|u_{\e}|^{2}) 
\end{equation}
for all $v\in{}H_{T}^{1}(\Omega;\mathbb{R}^{2})$.
Formally, one can write the strong formulation of these equations and one gets
\begin{equation}\label{eq:ELdivstrong}
\left\{
\begin{alignedat}{2}
-\Delta u-k\nabla\Div u &=\frac{1}{\e^2}u(1-|u|^2)\quad && \mbox{in}\ \Omega,\\
u_{n}&=0\quad && \mbox{on}\ \Gamma,\\
\partial_nu_{\tau}&=0\quad && \mbox{on}\ \Gamma,
\end{alignedat}
\right.
\end{equation}

\begin{equation}\label{eq:ELcurlstrong}
\left\{
\begin{alignedat}{2}
-\Delta u-k\nabla^{\perp}\Cur u &=\frac{1}{\e^2}u(1-|u|^2)\quad && \mbox{in}\ \Omega,\\
u_{n}&=0\quad && \mbox{on}\ \Gamma,\\
\partial_n u_{\tau}+k\Cur u&=0\quad && \mbox{on}\ \Gamma.
\end{alignedat}
\right.
\end{equation}
Noting the decomposition of $u$ in the frame $\{n,\tau\}$ i.e.\ $u=u_\tau \tau+u_n n$ one can write $\Cur u = \partial_n u_\tau - \partial_\tau u_n + \kappa u_\tau$, where $\kappa$ is the curvature of the boundary.
Hence $0=\partial_n u_{\tau}+k\Cur u$ is equivalent to $(1+k)\partial_n u_\tau + k\kappa u_\tau = k\partial_\tau u_n = 0$, where in the last step we used that $u_n=0$ on $\Gamma$. 
The boundary condition for $u_\tau$ can thus be seen as a non-homogeneous Robin condition, see Subsection~\ref{subsec:coord} for more details.
This mixed Dirichlet-Robin boundary condition is the reason why one cannot apply the a priori results from \cite{bcs24} which only handles the Dirichlet-Neumann case coming from the divergence penalization \eqref{eq:ELdivstrong}.
The reflection-extension method used to prove the $L^\infty$ and Lipschitz bounds needs to account for the additional boundary term which we show can be absorbed into the differential operator while preserving ellipticity.
This is made precise in Section~\ref{sec:bounds}.\\

The physical motivation for studying these problems is twofold:
Many mathematical and physical results assume a simplified elastic energy which typically consists only of the Dirichlet energy of the deformation \cite{FuStaYoYo, LeKiKi, ball2011orientability, majzar,baumanparkphillips12, alama2015weak, alama2020thin, alama2021boojum,ACS21,ACS24, Alama_2016, Alama_2018}.
However, some materials such as lyotropic chromonic liquid crystals \cite{material,ZhoNaYu} posses properties that require a more precise description of their elastic characteristics which lead towards the inclusion of anisotropic elastic terms into the variational models \cite{Tasinkevych2012,Liu2013, bcmmssv2024, Kolapa_2022, gomist}.\\

The necessity for studying the type of boundary conditions as in this article comes from the anchoring transition results of Volovik \& Lavrentovich \cite{volovik1983topological} and the mathematical insight provided in Colbert-Kelly \& Phillips \cite{colbert2013analysis} for Smectic C* liquid crystals. 
Indeed, the situation investigated in \cite{volovik1983topological} with tangential anchoring yields two boundary defects, leading us to study the condition $u_n=0$ along $\Gamma$. 
Due to this boundary condition, a $\Ht$-minimizer $u_{\e}$ near a nontrivial boundary vortex $q\in\Gamma$ for $\Ediv$ or $\Ecur$ will either change orientation across $q$ or it will not. 
More precisely, we mean that along an interior arc connecting two nearby boundary points on either side of $q$, the phase of $u_{\e}$ along this arc changes either by an odd multiple of $\pi$ or even multiple of $\pi$ with respect to the positively oriented unit tangent vector $\tau(q)$. 
To capture this turning behaviour, an integer quantity called the \emph{boundary index} $D(q)\in\mathbb{Z}$ is defined--as the analogue of \emph{degree} for interior vortices.
The reader is directed to \cite[Section 4]{abv} for details on this construction, the general idea is as follows. 
We assume $|u_{\e}|\geq \frac{1}{2}$ in some small annular region centered at $q\in\Gamma$. 
Since $u_{\e}/|u_{\e}|=\pm\tau$ along the boundary of this annular region that coincides with $\Gamma$, we make the observation that the function $u_{\e}^2$ (when viewed as a complex-valued function in polar form) maintains the same orientation on either side of said boundary components. 
We then interpolate $u_{\e}^2$ across $q$, which defines a function with even degree along the boundary of the closed ball $\partial(\overline{B_r(q)}\cap\overline{\Omega})$. 
The boundary index of $u_{\e}$ is then defined as one half of this quantity. \\

To analyze the interaction of boundary vortices with bend and splay penalization, we take inspiration from \cite{colbert2013analysis}, where they consider an energy of the form 
\begin{equation}\label{def:energy-CKPh}
\frac{1}{2}\int_{\Omega}\left(k_s(\Div u)^2+k_b(\Cur u)^2+\frac{1}{2\e^2}\left(1-|u|^2\right)^2\right)\mathrm{d}x
\end{equation}
with non-equal splay $k_s>0$ and bend $k_b>0$ moduli and Dirichlet boundary data $u=g\in\mathbb{S}^1$ on $\partial\Omega$. 
It is found that the elastic energy density above can be rewritten in one of the two forms:
\begin{equation*}
k_s(\Div u)^2+k_b(\Cur u)^2=
\begin{cases}
k_b|\grad u|^2+(k_s-k_b)(\Div u)^2+2k_b\det(\grad u) & \mbox{if}\ k_s>k_b,\\
k_s|\grad u|^2+(k_b-k_s)(\Cur u)^2+2k_s\det(\grad u) & \mbox{if}\ k_b>k_s.
\end{cases}
\end{equation*}
In either case, the Dirichlet boundary data simplifies their energy as the Jacobian term integrates to $\pi$ times the winding number of $g$ along $\Gamma$. 
Our energies \eqref{def:DivEnergy} and \eqref{def:CurlEnergy} do not contain the Jacobian contribution, which would be needed to make the connection with \eqref{def:energy-CKPh} since due to our boundary condition, the Jacobian may contribute nontrivial energy in case of boundary vortices. \\

With this framework and physical motivation in hand, we are now ready to state our main result:
\begin{thm}\label{thm:maindiv}
Let $\Omega\subset\mathbb{R}^2$ be open, bounded, and simply-connected with $C^{3,1}-$boundary $\Gamma$. 
Let $\{u_{\e}\}_{\e>0}$ be a sequence of $H^1_T(\Omega;\mathbb{R}^2)$-minimizers for $\Ediv$ or $\Ecur$. 
Then there is a subsequence $\e_n\to 0$ and a finite set of point singularities $\Sigma\subset\overline{\Omega}$ such that 
\begin{equation*}
    u_{\e_n}\w u_0 \quad\mbox{weakly in}\ \Hloc^1(\overline{\Omega}\setminus\Sigma;\mathbb{R}^2)
    \, ,
\end{equation*}
where $u_0\in \Hloc^1(\overline{\Omega}\setminus\Sigma;\mathbb{R}^2)$ with $|u_0|=1$ $\mathcal{H}^2-$almost everywhere and $\langle u_0,n\rangle=0$ $\mathcal{H}^1-$almost everywhere on $\Gamma$. 
The degree and boundary index associated to each vortex for $u_0$ in $\Sigma$ is equal to one. Moreover,

\begin{enumerate}[(i)]
    \item $\Sigma=\{p\}\subset\Omega$ in the case of divergence penalization, and
    \item $\Sigma=\{p\}\subset\Omega$ or $\Sigma=\{q_1,q_2\}\subset\Gamma$ in the case of curl penalization. 
\end{enumerate}
\end{thm}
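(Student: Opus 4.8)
The plan is to establish the result via the standard Ginzburg-Landau vortex-analysis machinery, but carefully adapted to the tangential anchoring condition and to the extra splay/bend term. First I would invoke the a priori estimates from Section~\ref{sec:bounds}: the uniform $L^\infty$ bound $\|u_{\e}\|_{L^\infty(\Omega)}\le C$ and the Lipschitz bound $\|\nabla u_{\e}\|_{L^\infty}\le C/\e$, together with an upper energy bound of the form $\Ediv(u_{\e})\le C|\log\e|$ (resp.\ $\Ecur$), obtained by constructing an explicit competitor with a prescribed number of $\pm1$ degree interior vortices or boundary half-vortices adhering to $\Ht(\Omega)$. With these in hand, the next step is the \emph{vortex ball construction}: because the energy density controls $\tfrac12|\nabla u_{\e}|^2+\tfrac1{4\e^2}(1-|u_{\e}|^2)^2$, one produces a finite collection of disjoint balls $\{B_{r_i}(x_i)\}$ covering the set $\{|u_{\e}|\le \tfrac12\}$ with $\sum r_i\to 0$ and a lower bound $\sum_i \pi|d_i||\log\e| \le \Ediv(u_{\e}) + C$, where $d_i$ are the local degrees; for balls meeting $\Gamma$ one uses the boundary-index $D(q)$ described in the introduction and the reflection argument to treat the half-ball as a full vortex of even degree whose half is $D$. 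The $\eta$-compactness lemma then forces the bad set to cluster at finitely many points $\Sigma\subset\overline{\Omega}$, and on $\Hloc^1(\overline\Omega\setminus\Sigma)$ one extracts a weakly convergent subsequence to $u_0$ with $|u_0|=1$ a.e.\ and $\langle u_0,n\rangle=0$ on $\Gamma$, the latter passing to the limit because the trace is weakly continuous on compact subsets of $\Gamma\setminus\Sigma$.

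To pin down that each limiting singularity has degree (or boundary index) exactly one, I would run a lower-bound/renormalized-energy argument: a vortex of degree $d$ at an interior point contributes at least $\pi d^2|\log\e|$ to the leading order (the square coming from the Dirichlet term alone, which is present as a lower bound in both energies since $k>0$ only adds a nonnegative contribution), while $N$ simple vortices summing to the same total topological charge contribute only $\pi N|\log\e|$; minimality then forbids $|d_i|\ge 2$ and forbids cancelling pairs, by comparison with a competitor obtained from $u_{\e}$ by splitting/merging vortices (a dipole-removal surgery on a small annulus, which costs only $O(1)$ energy). The same dichotomy applied at the boundary, using that a boundary half-vortex of index $D$ costs $\sim \tfrac{\pi}{2}D^2|\log\e|$, shows boundary indices are $\pm1$; and the tangential constraint $u_0=\pm\tau$ on $\Gamma$ fixes the sign so that the total boundary index is determined by the topology of $\Gamma$ (a single closed curve), which is why an odd number of boundary half-vortices — hence exactly two, or zero — can occur.

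Finally, for the trichotomy in (i) versus (ii): a careful global topological accounting shows that if $\Sigma$ contains a boundary point then, since $u_0/|u_0| = \pm\tau$ along each boundary arc between consecutive singularities and $\tau$ itself winds once around $\mathbb{S}^1$ as one traverses $\Gamma$, the number of interior vortices plus half the number of boundary half-vortices must equal $1$; the minimal-energy configurations are then exactly $\{p\}\subset\Omega$ or $\{q_1,q_2\}\subset\Gamma$. To rule out the boundary case under \emph{divergence} penalization (statement (i)), I would compare the renormalized energies of the two admissible configurations: near a boundary half-vortex the leading-order profile is constrained by the homogeneous Neumann-type condition $\partial_n u_\tau = 0$, and one shows — as in the argument announced at the end of the introduction and carried out in the final section — that the $(\Div u)^2$ penalization strictly raises the cost of a boundary vortex relative to the curl case (equivalently, the optimal $\mathbb{S}^1$-profile around a boundary point necessarily has nonzero divergence, so $k(\Div u)^2$ cannot be made negligible), so that a single interior vortex is always energetically preferred. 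The \textbf{main obstacle} I anticipate is precisely this last step: controlling the elastic term $k(\Div u_{\e})^2$ (resp.\ $k(\Cur u_{\e})^2$) near a vortex, since unlike the Dirichlet term it is not rotation-invariant and does not decouple from the vortex profile — one must identify the optimal vortex profile for the anisotropic energy and quantify the excess cost, presumably via a lower-bound estimate on $\int (\Div u)^2$ over annuli for $\mathbb{S}^1$-valued maps with prescribed (half-)degree and tangential boundary values.
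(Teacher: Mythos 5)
Your overall framework — a priori $L^\infty$ and Lipschitz bounds, logarithmic energy upper bound, vortex-ball covering and $\eta$-compactness, the topological identity $\sum d_i+\tfrac12\sum D_j=1$ from the winding of $\tau$, and a local lower bound $\pi d^2\log(R/r)$ (interior) vs.\ $\tfrac{\pi}{2}D^2\log(R/r)$ (boundary) forcing all degrees and indices to equal $1$ — matches the paper's strategy in Sections~\ref{sec:eta} and \ref{sec:lowerbound} closely, and your reading of the dichotomy $\Sigma=\{p\}$ or $\Sigma=\{q_1,q_2\}$ from the summing constraint is correct (with $J$ even, not odd, as you wrote).

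The genuine gap is in your final step. You propose to exclude boundary vortices for $\Ediv$ by a \emph{renormalized-energy} ($O(1)$-level) comparison, arguing that the $(\Div u)^2$ term ``strictly raises the cost'' of a boundary half-vortex relative to an interior vortex, and you flag quantifying this excess cost over annuli as the main obstacle. The paper does something sharper and simpler: the obstruction appears already at order $|\ln\e|$, not $O(1)$. Concretely, writing $u_\e=|u_\e|e^{i(\theta+\psi_\e)}$ near a hypothetical boundary vortex $x_0$, with $\psi_\e\in\pi\mathbb{Z}$ on $\Gamma$ by anchoring, one computes
\begin{equation*}
u_\e^\perp\cdot\nabla\theta=\frac{|u_\e|}{r}\cos\psi_\e,
\end{equation*}
so the divergence carries a $\tfrac1r\cos\psi_\e$ contribution. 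Since the total energy is $\pi|\ln\e|+O(1)$ and the Dirichlet part alone already accounts for $\pi|\ln\e|+o(|\ln\e|)$ from the $\tfrac{|u_\e|^2}{r^2}$ term, one forces $\int_{A_{\e,r_\e}}|\Div u_\e|^2=o(|\ln\e|)$, and after eliminating cross terms this yields $\int_{A_{\e,r_\e}}\tfrac{|u_\e|^2}{r^2}\cos^2\psi_\e=o(|\ln\e|)$. Selecting a good circle $r=s_\e$ and passing to the limit gives $\psi_\e\to\psi_0$ with $\cos^2\psi_0=0$, i.e.\ $\psi_0\in\tfrac{\pi}{2}+\pi\mathbb{Z}$, which directly contradicts the tangential constraint $\psi_0\in\pi\mathbb{Z}$. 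In other words, a boundary half-vortex with strong tangential anchoring is \emph{incompatible} with divergence control at leading logarithmic order — there is no need (and no hope) to compute a finite excess renormalized energy, because the putative configuration already violates the upper bound $\pi|\ln\e|+C$. Your proposed $O(1)$ argument would both be harder to carry out and would under-state the strength of the obstruction; you should instead aim directly at the $|\ln\e|$-order incompatibility via the polar decomposition of $\Div u$.
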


In the case of divergence penalization, our theorem guarantees a single interior vortex. However, the theorem does not give direct insight to precisely which vortex type is expected in the case of curl penalization. Numerical simulations suggest curl penalization favours boundary defects, see Figure~\ref{fig:disk:strAnch:div_curl} and \ref{fig:peanut:strAnch:div_curl}. 
This is due to the freedom of the boundary condition, which cannot be obtained by a smooth tangential Dirichlet condition, see Figure \ref{fig:disk:Dirichlet:curl}.\\

\begin{figure}
\begin{center}
\includegraphics[scale=0.22, angle=90]{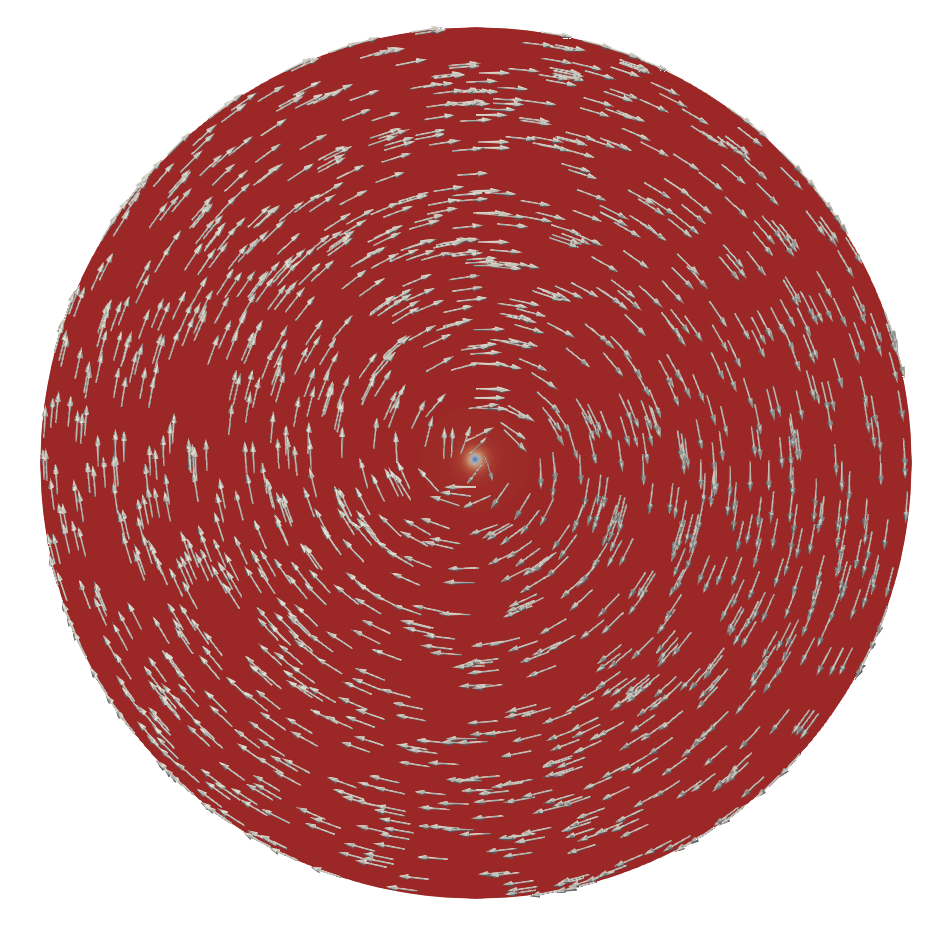} 
\includegraphics[scale=0.22, angle=90]{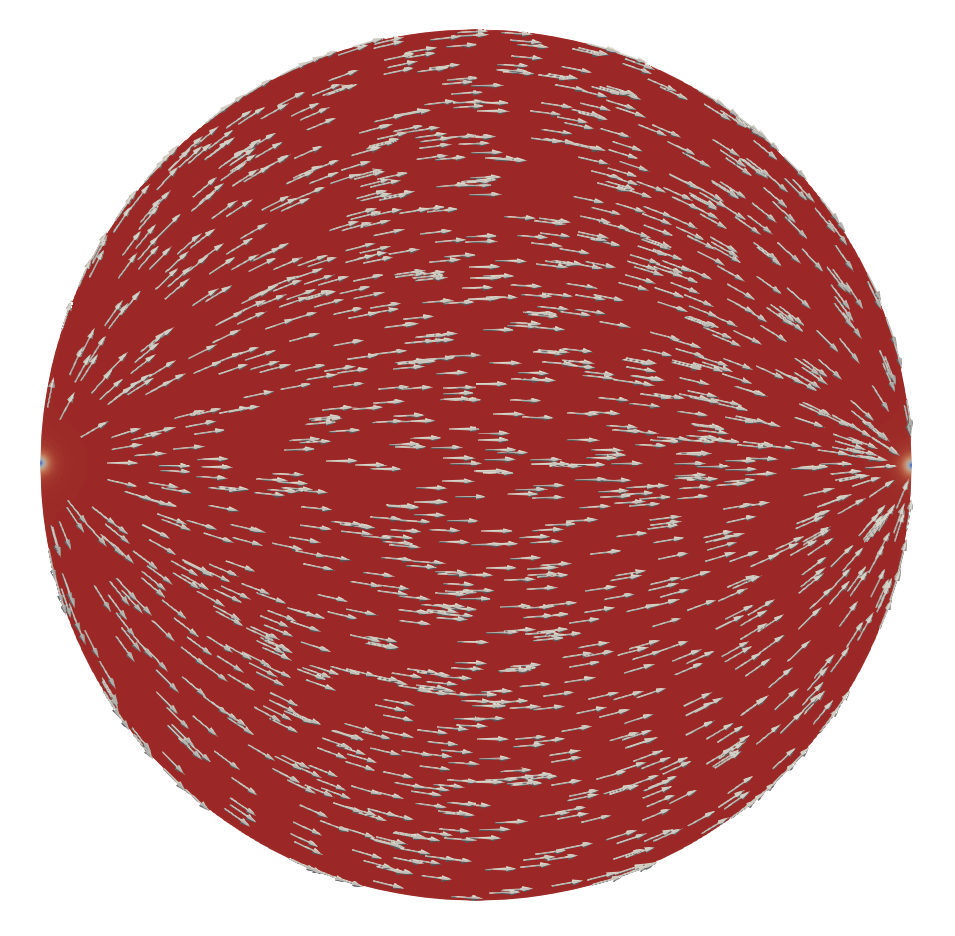} 
\caption{Plot of energy minimal vector fields $u_\e$ of $E_\e$ with divergence penalization (left) and curl penalization (right) for $k=1$ and $\e=0.01$ on the unit disk subject to the boundary condition $\langle u_\e,x\rangle=0$. 
The divergence penalized minimizers shows a divergence-free interior degree $1$ vortex, while in the curl-penalized case two opposing half-vortices on the boundary are preferred.}
\label{fig:disk:strAnch:div_curl}
\end{center}
\end{figure}

\begin{figure}
\begin{center}
\includegraphics[scale=0.22]{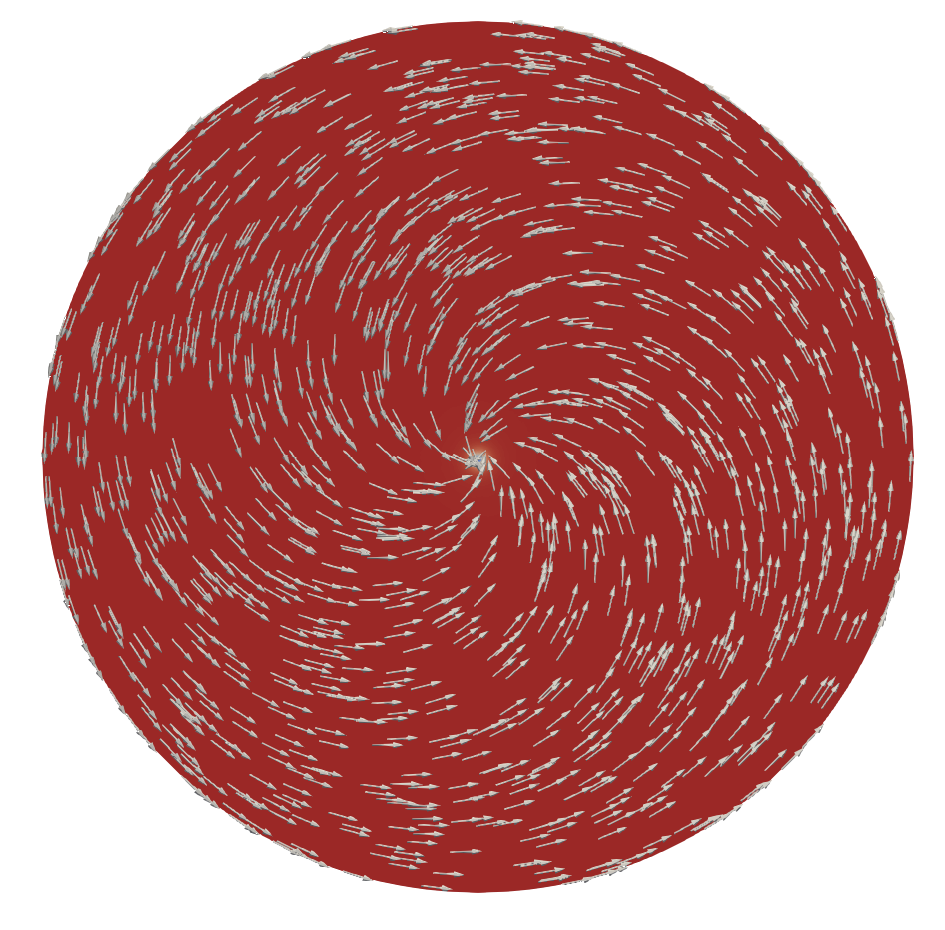} 
\caption{Plot of a minimizer $u_\e$ of $E_\e$ with curl-penalization and $u_\e(x)=x^\perp$ imposed on the boundary. 
Note that this is not the energy minimal configuration for $\langle u_\e,x\rangle=0$ (see Figure \ref{fig:disk:strAnch:div_curl}). 
The Dirichlet condition forces an interior defect of type $\frac{x}{|x|}$ at the origin, see also \cite{LKM2006}.}
\label{fig:disk:Dirichlet:curl}
\end{center}
\end{figure}

\begin{figure}
\begin{center}
\includegraphics[scale=0.25, angle=90]{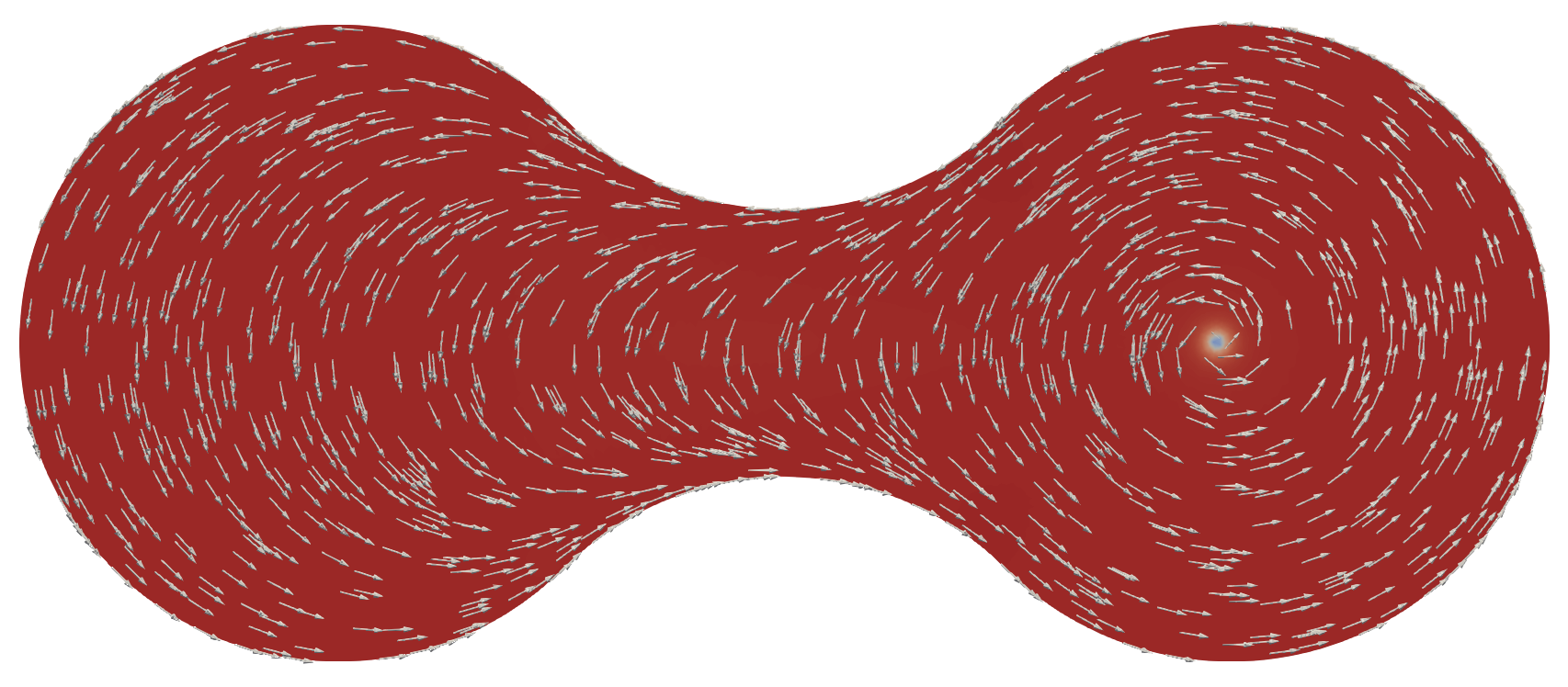} 
\includegraphics[scale=0.25, angle=90]{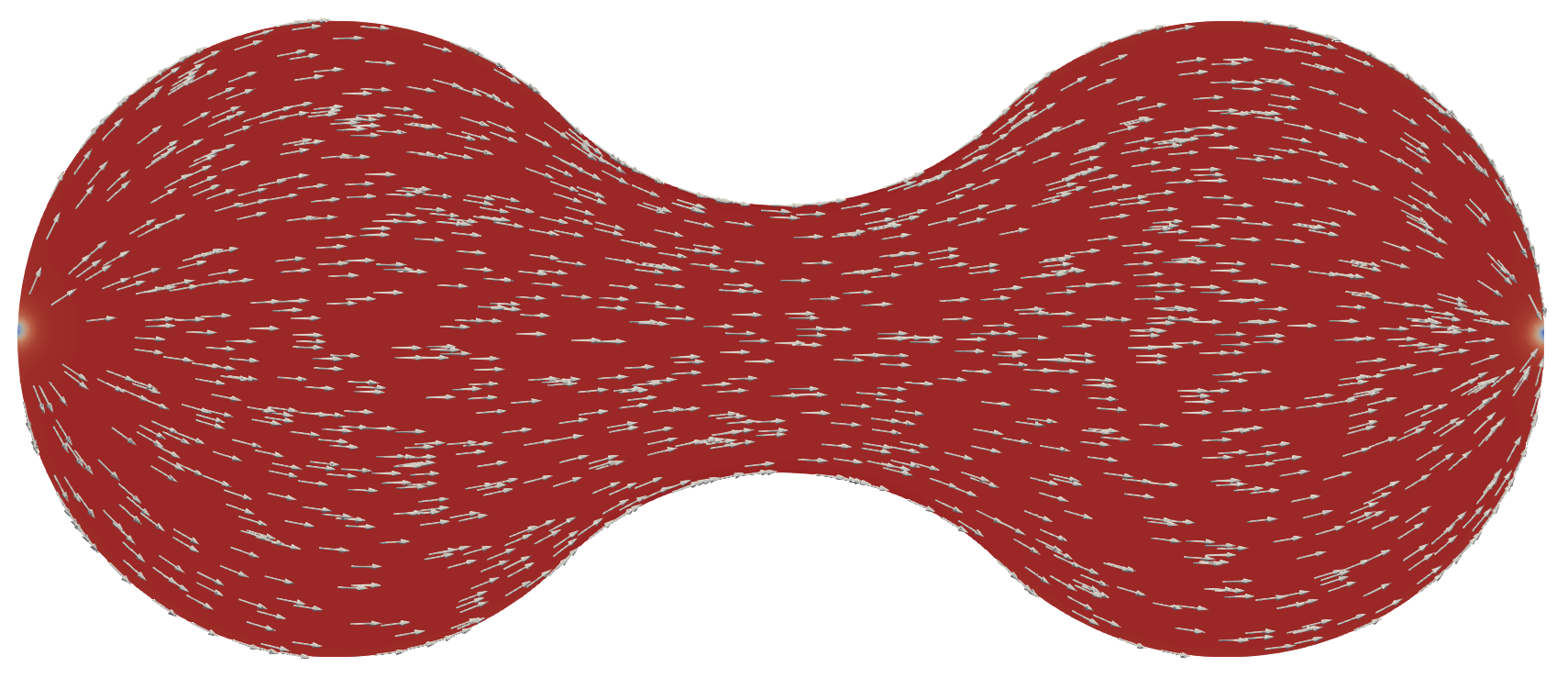} 
\caption{Plot of energy minimal vector fields $u_\e$ of $E_\e$ with divergence penalization (left) and curl penalization (right) for $k=1$ and $\e=0.01$ on a peanut shaped domain subject to the boundary condition $\langle u_\e,n\rangle=0$. 
For $\e$ sufficiently small, the divergence penalized minimizers shows a divergence-free interior degree $1$ vortex. 
In the curl-penalized case, two half-vortices on the boundary placed as far from each other as possible are optimal.}
\label{fig:peanut:strAnch:div_curl}
\end{center}
\end{figure}

The proof of Theorem~\ref{thm:maindiv} is presented in the following three sections of this article. 
In each section, we first analyze the case of divergence-penalized minimizers, followed by the necessary modifications for curl-penalized minimizers. 
Beginning in Section~\ref{sec:bounds}, we obtain an upper bound for minimizing energies and then provide a priori $L^{\infty}$-bounds for minimizers and their gradients. 
These a priori bounds are then used in Section~\ref{sec:eta} to prove an $\eta$-compactness result for both divergence and curl-dependent energies. 
The proof concludes with Section~\ref{sec:lowerbound} where a matching lower bound for the energies is found away from defects and the nonexistence of boundary vortices in the case of divergence penalization is shown.

\paragraph{Acknowledgment}
We would like to thank Stan Alama for his helpful discussions during Lee van Brussel's doctoral thesis on which part of this article is based.
LB is supported by an NSERC discovery grant. Part of this work was carried out while AC and DS were affiliated with McMaster University.

\section{Energy Upper Bound and a priori Estimates}\label{sec:bounds}

We begin this section by obtaining an upper bound for the energies of an $\Ht$-minimizer using competitors constructed by Colbert-Kelly and Phillips in \cite{colbert2013analysis} for both divergence and curl penalizations.
\begin{prop}\label{prop:energyupper}
If $u_{\e}$ is an $\Ht$-minimizer of $\Ediv$, then there is a constant $C>0$ independent of $\e$ such that
\begin{equation}\label{eq:Eupperbound}
\Ediv(u_{\e})\leq \pi|\ln\e|+C.
\end{equation}
Similarly, if $u_{\e}$ is an $\Ht$-minimizer of $\Ecur$, then inequality \eqref{eq:Eupperbound} holds for $\Ecur(u_{\e})$.
\end{prop}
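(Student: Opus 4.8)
The strategy is the standard one of constructing an explicit competitor and invoking minimality: since $u_\e$ minimizes $\Ediv$ (resp.\ $\Ecur$) over $\Ht(\Omega;\R^2)$, it suffices to exhibit a single test map $v_\e\in\Ht(\Omega;\R^2)$ with $\Ediv(v_\e)\le\pi|\ln\e|+C$. The natural candidate is the competitor built by Colbert-Kelly and Phillips in \cite{colbert2013analysis}, adapted to the tangential anchoring setting.

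Here is how I would organize the construction. First I would fix a point $p_0$ in the interior of $\Omega$ (or, in the curl case, one could alternatively place the core near the boundary, but a single interior vortex already gives the bound we want), and build $v_\e$ so that away from a ball $B_\rho(p_0)$ of fixed radius $\rho>0$, the map $v_\e$ is an $\mathbb{S}^1$-valued field of degree one that is tangent to $\Gamma$ — this is possible because $\Omega$ is simply connected with smooth boundary, so one can write $v_\e = e^{i\varphi}$ with $\varphi$ a harmonic (or at least $H^1$, smooth up to $\Gamma$) lifting chosen so that $\langle v_\e, n\rangle = 0$ on $\Gamma$; the existence of such a unit tangent field of degree one is classical. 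Inside $B_\rho(p_0)$ I would interpolate radially down to $0$ at the center on the scale $\e$, using the usual profile $f(|x-p_0|/\e)$ with $f(0)=0$, $f(\infty)=1$, so that the core contributes $O(1)$ to the potential and gradient energy. The logarithmically divergent part comes entirely from the annular region $B_\rho(p_0)\setminus B_\e(p_0)$, where $|\grad v_\e|^2 \approx 1/|x-p_0|^2$ integrates to $2\pi\ln(\rho/\e)$, giving the factor $\tfrac12\cdot 2\pi = \pi$ in front of $|\ln\e|$. Everything outside $B_\rho(p_0)$ is $\e$-independent and contributes to $C$.

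The one genuinely extra point here, relative to the pure Ginzburg-Landau case, is controlling the elastic term $k\int_\Omega(\Div v_\e)^2$ (resp.\ $k\int_\Omega(\Cur v_\e)^2$). Away from the core this is bounded by a constant depending only on $\Omega$, $\rho$, $k$ and the choice of tangent field, since $v_\e$ is smooth there. Near the core one has $|\Div v_\e|, |\Cur v_\e| \le C|\grad v_\e|$ pointwise, so $k\int_{B_\rho(p_0)}(\Div v_\e)^2 \le Ck\int_{B_\rho(p_0)}|\grad v_\e|^2$, which would seemingly also produce a $\pi|\ln\e|$ contribution with the wrong constant. The key observation — exactly the one exploited in \cite{colbert2013analysis} — is that a degree-one vortex profile can be chosen to be \emph{divergence-free} (the field $x^\perp/|x|$, i.e.\ $v_\e = i\,e^{i\theta}$ in the core), so that $\Div v_\e \equiv 0$ in the core for the divergence energy; symmetrically, the radial profile $x/|x|$ is \emph{curl-free}, handling the curl energy. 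Thus one should choose the core profile to match the penalized quantity, and then the elastic term stays $O(1)$. Reconciling this core choice with the prescribed tangent boundary behaviour outside $B_\rho(p_0)$ is handled in the intermediate annulus, which is $\e$-independent and therefore only adds to $C$.

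I expect the main obstacle to be bookkeeping rather than conceptual: one must verify that the transition layer between the core profile (chosen to kill $\Div$ or $\Cur$) and the global tangent field can be made with $\e$-independent energy while respecting $v_\e\in\Ht(\Omega;\R^2)$, and that the curvature terms appearing in $\Div$ and $\Cur$ when expressed in the $\{n,\tau\}$ frame near $\Gamma$ (recall $\Cur u = \partial_n u_\tau - \partial_\tau u_n + \kappa u_\tau$) are absorbed into the constant using the $C^{3,1}$ regularity of $\Gamma$. Once the competitor is in hand, the conclusion $\Ediv(u_\e)\le\Ediv(v_\e)\le\pi|\ln\e|+C$ is immediate from minimality, and the curl case is identical with the curl-free core profile.
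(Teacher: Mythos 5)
Your proposal is correct and rests on the same underlying idea as the paper's proof, namely the Colbert-Kelly--Phillips competitor with a degree-one vortex whose core profile is chosen divergence-free (for $\Ediv$) or curl-free (for $\Ecur$) so that the elastic term stays $O(1)$. The paper, however, takes a noticeably more economical route: rather than re-deriving the construction and adapting it to the tangential anchoring class, it simply observes that the Dirichlet class $\{v\in H^1(\Omega;\mathbb{R}^2): v=\tau\ \text{on}\ \Gamma\}$ is contained in $\Ht(\Omega)$ (since $\langle\tau,n\rangle=0$), and that $\deg(\tau;\Gamma)=1$, so the bound $\min_{\Ht}\Ediv\le\Ediv(v)\le\pi|\ln\e|+C$ follows immediately by citing \cite[Proposition~2.1]{colbert2013analysis} for the Dirichlet minimizer. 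This set-inclusion shortcut avoids all the bookkeeping you flag at the end of your plan (reconciling the core profile with the boundary tangent field, handling the curvature terms in the $\{n,\tau\}$ frame), because those issues were already resolved in the cited Dirichlet result. Your version is sound but redoes work that the citation already provides; the paper buys brevity by outsourcing the construction entirely, at the cost of being less self-contained.
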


\begin{proof}
By \cite[Proposition 2.1]{colbert2013analysis} there exists a minimizer $v$ for $\Ediv$ over the function space
\begin{equation*}
\{v\in H^1(\Omega;\mathbb{R}^2):v=\tau\ \mbox{on}\ \Gamma\}\subset \Ht(\Omega).
\end{equation*} 
By the set inclusion above, $\Ediv(u_{\e})\leq \Ediv(v)$ and noting $\deg(\tau;\Gamma)=1$, applying \cite[Proposition 2.1]{colbert2013analysis} to $\Ediv(v)$ yields
\begin{equation*}
\Ediv(u_{\e})\leq \Ediv(v)\leq \pi|\ln\e|+C
\end{equation*}
as desired. The proof for $\Ecur$ is identical.
\end{proof}

Another crucial ingredient for the proofs in Sections~\ref{sec:eta} and \ref{sec:lowerbound} are estimates on the $L^\infty-$ and Lipschitz norms of minimizers $u_\e$ of \eqref{def:DivEnergy} or \eqref{def:CurlEnergy}.
In the case of divergence penalization, i.e.\ \eqref{def:DivEnergy}, these estimates have been proven in \cite{bcs24}:

\begin{lem}[\cite{bcs24}]\label{lem:a-priori-div}
Let $\partial\Omega$ be of class $C^{3,1}$.
There exist constants $C_1, C_2,  \varepsilon_{0}>0$, only depending on
$k$ and $\Omega$ such that for all
$\varepsilon\in (0,\varepsilon_{0})$ and any minimizer $u_\varepsilon$ of
\eqref{def:DivEnergy} subject to the boundary condition $\langle u,n\rangle=0$, it holds
\begin{equation*}
\Vert u_\varepsilon\Vert_{L^\infty(\Omega;\mathbb{R}^{2})}
\ \leq \ 
C_1\quad \mbox{and}\quad \Vert
\nabla u_\varepsilon\Vert_{L^\infty(\Omega;\mathbb{R}^{2})}
\ \leq \ 
\frac{C_2}{\varepsilon}.
\end{equation*}
\end{lem}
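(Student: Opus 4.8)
\noindent\emph{Proof plan.} Both bounds are local, so the strategy is to reduce them to interior estimates for a uniformly elliptic system and then extract them from (i) a uniform $L^\infty$ control and (ii) rescaled elliptic regularity. Away from $\G$ the Euler--Lagrange equation \eqref{eq:ELdivstrong} is already of this type: the principal part $-\Delta u-k\grad\Div u$ has symbol $|\xi|^2\mathrm{Id}+k\,\xi\otimes\xi$, which is positive definite for $k>0$, while the right-hand side $\tfrac1{\e^2}u(1-|u|^2)$ is the \emph{favourable} cubic nonlinearity, in that it drives $|u|$ towards $1$. Near $\G$ one flattens the boundary: since $\G\in C^{3,1}$, the normal-coordinate map $(s,t)\mapsto\gamma(s)+t\,n(s)$ is a $C^{2,1}$ diffeomorphism of a collar onto $\{0\le t<t_0\}$, in which the conditions of \eqref{eq:ELdivstrong} read $u_n=0$ and, up to lower-order frame terms, $\del_t u_\tau=0$ on $\{t=0\}$. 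The \emph{odd} reflection of $u_n$ together with the \emph{even} reflection of $u_\tau$ across $\{t=0\}$ then glue to a $C^1$ field $\bar u$ on $\{|t|<t_0\}$ solving an elliptic system of the same structure with coefficients that are uniformly elliptic and $C^{1,\alpha}$; this is the reflection--extension device (as used in \cite{bcs24} for the present Dirichlet--Neumann case). After this reduction it suffices to estimate $\bar u$ and $\grad\bar u$ on interior balls, and by interior elliptic regularity $u_\e$ is in particular continuous up to $\G$, so the left-hand sides are well defined.

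\noindent\emph{Step 1: the $L^\infty$ bound --- the main obstacle.} Proposition~\ref{prop:energyupper} gives $\Ediv(u_\e)\le\pi|\ln\e|+C$, hence $\int_\Om|\grad u_\e|^2\le C|\ln\e|$ and $\int_\Om(1-|u_\e|^2)^2\le C\e^2|\ln\e|$. In contrast with the scalar Ginzburg--Landau case one cannot simply conclude $|u_\e|\le1$: truncating $u_\e$ at level $1$ need not lower the $(\Div u)^2$ term, and there is no scalar maximum principle for $w:=|u_\e|^2$ because of the coupling $k\grad\Div u_\e$. Testing \eqref{eq:ELdivweak} with $v=\varphi\,u_\e$ ($\varphi\ge0$ scalar, so $v\in\Ht$) nonetheless yields the scalar identity
\[
\Div\!\Big(\tfrac12\grad w+k\,u_\e\,\Div u_\e\Big)=|\grad u_\e|^2+k(\Div u_\e)^2+\tfrac1{\e^2}\,w\,(w-1),
\]
whose cubic term has the right sign on the superlevel sets $\{w>\lambda\}$, $\lambda\ge1$. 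The plan is a De Giorgi/Moser iteration for $(w-\lambda)_+$ there: the positive left-hand side terms $|\grad u_\e|^2$, $k(\Div u_\e)^2$ and $\tfrac1{\e^2}w(w-1)$ are used to absorb the drift $k\,u_\e\Div u_\e$ via Young's inequality, and the energy bound forces $|\{w>\lambda\}|$ to be small, making the iteration converge to $\|w\|_{L^\infty}\le C_1^2$. To keep the absorption robust in $k$ one first notes that $d:=\Div u_\e$ satisfies the \emph{scalar} equation $-(1+k)\Delta d=\tfrac1{\e^2}\Div\!\big((1-|u_\e|^2)u_\e\big)$, extracts from it an interior $L^p$ estimate on $d$, and feeds it back into the iteration. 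This is where essentially all of the work lies.

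\noindent\emph{Step 2: the Lipschitz bound.} Granting $\|u_\e\|_{L^\infty(\Om)}\le C_1$, fix $x_0\in\overline\Om$ and set $v(y):=u_\e(x_0+\e y)$ --- in the flattened coordinates and on the reflected system when $x_0$ lies in the collar. Then, on $B_1$, $v$ solves an equation $-\Div(A_\e\grad v)-k\grad(\Div_{A_\e}v)=v(1-|v|^2)+(\text{lower order})$ with right-hand side bounded by $C_1+C_1^3$ and with $A_\e$ uniformly elliptic and uniformly $C^{1,\alpha}$ on this scale (this is exactly where $\G\in C^{3,1}$ is used, to guarantee $C^{1,\alpha}$ coefficients after flattening). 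Interior $W^{2,p}$ and then Schauder estimates for uniformly elliptic systems give $\|v\|_{C^{1,\alpha}(B_{1/2})}\le C(k,\Om)$, and undoing the scaling yields $\|\grad u_\e\|_{L^\infty(B_{\e/2}(x_0)\cap\overline\Om)}\le C_2/\e$; a covering of $\overline\Om$ by such balls gives the claim. All constants depend only on the ellipticity of the system and on $\G$, hence only on $k$ and $\Om$, while $\e_0=\e_0(k,\Om)$ enters only so that the collar and the rescaled balls fit inside the domain of the flattening map.
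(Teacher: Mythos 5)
Your overall framework (reflection--extension across $\G$, uniform ellipticity of the extended system, rescaling to scale $\e$, and elliptic regularity) matches the paper's cited method, and Step~2 is essentially the argument in~\cite{bcs24} and in Subsection~\ref{subsec:regproof} for the curl analogue. The genuine divergence is in Step~1: the paper (and~\cite{bcs24}) does \emph{not} obtain the $L^\infty$ bound by a De~Giorgi/Moser iteration on $w=|u_\e|^2$. Instead, both bounds drop out of the \emph{same} rescaled elliptic-regularity bootstrap. The key input is the uniform potential bound of type~\eqref{eq:uniform_bound_L4}, obtained by matching the logarithmic upper bound of Proposition~\ref{prop:energyupper} with the $\pi|\ln\e|$ lower bound; this gives a uniform $L^4$ bound on $u_\e$ (hence on $\widehat U_\e(z)=U_\e(x_0+\e z)$). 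The rescaled nonlinearity $\widehat U_\e(1-|\widehat U_\e|^2)$ is therefore uniformly bounded in $L^{4/3}$, so the extended system yields a uniform $W^{2,4/3}$ bound, and $W^{2,4/3}\hookrightarrow L^\infty$ in two dimensions delivers $\|u_\e\|_{L^\infty}\le C_1$ with no iteration on superlevel sets. The Lipschitz bound then follows by bootstrapping to $W^{2,p}$ with $p>2$, as you describe.

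Your Step~1 as written does not close. Testing with $\varphi=(w-\lambda)_+$ and applying Young to the drift $k\int(\Div u_\e)(u_\e\cdot\nabla\varphi)$ gives a term of the form $\tfrac{k^2}{2\delta}\int_{\{w>\lambda\}} w\,(\Div u_\e)^2$, which must be absorbed by the left-hand term $k\int(w-\lambda)_+(\Div u_\e)^2$; on $\{w>2\lambda\}$ this requires $\delta\ge k$, while absorbing $\delta\int|\nabla(w-\lambda)_+|^2$ requires $\delta$ small, so the two constraints are incompatible unless $k$ is small. Your fallback --- feed in an $L^p$ bound on $d=\Div u_\e$ from $-(1+k)\Delta d=\tfrac{1}{\e^2}\Div((1-|u_\e|^2)u_\e)$ --- does not repair this, because the right-hand side of that scalar equation is only $O(\e^{-1}\sqrt{|\ln\e|})$ in $H^{-1}$ (and $\|\Div u_\e\|_{L^2}$ is itself only $O(\sqrt{|\ln\e|})$ from the energy), so any resulting $L^p$ bound on $d$ degrades as $\e\to 0$. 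You flag this yourself by saying ``this is where essentially all of the work lies''; the paper's route via the uniform $L^4$ bound and $W^{2,4/3}\hookrightarrow L^\infty$ avoids the issue entirely and is the one you should adopt.
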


In the curl-penalized case, we are going to prove the analogous results. We have the following Theorem:

\begin{thm}\label{thm:a-priori-curl}
Let $\partial\Omega$ be of class $C^{3,1}$.
There exist constants $C_1, C_2,  \varepsilon_{0}>0$, only depending on
$k$ and $\Omega$ such that for all
$\varepsilon\in (0,\varepsilon_{0})$ and any minimizer $u_\varepsilon$ of
\eqref{def:CurlEnergy} subject to the boundary condition $\langle u,n\rangle=0$, it holds
\begin{equation*}
\Vert u_\varepsilon\Vert_{L^\infty(\Omega;\mathbb{R}^{2})}
\ \leq \ 
C_1\quad \mbox{and}\quad \Vert
\nabla u_\varepsilon\Vert_{L^\infty(\Omega;\mathbb{R}^{2})}
\ \leq \ 
\frac{C_2}{\varepsilon}.
\end{equation*}
\end{thm}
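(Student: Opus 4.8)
The plan is to adapt the reflection-extension argument that establishes Lemma~\ref{lem:a-priori-div} to the curl-penalized setting, the essential new difficulty being that the natural boundary condition on $u_\tau$ is of Robin type rather than Neumann type. I would work in a collar neighborhood of $\Gamma$ using boundary-normal (Fermi) coordinates $(s,t)$, where $s$ is arclength along $\Gamma$ and $t\in[0,t_0)$ is the signed distance to $\Gamma$; since $\partial\Omega\in C^{3,1}$, this change of variables is $C^{2,1}$ and the metric coefficients are controlled in terms of the curvature $\kappa(s)$ and its derivatives. In these coordinates write $u = u_\tau\tau + u_n n$, so that the system \eqref{eq:ELcurlstrong} becomes, after expanding $\Cur u = \partial_n u_\tau - \partial_\tau u_n + \kappa u_\tau$ and using $u_n = 0$ on $\Gamma$, a second-order elliptic system for $(u_\tau,u_n)$ whose boundary conditions are $u_n = 0$ (Dirichlet) and $(1+k)\partial_n u_\tau + k\kappa u_\tau = 0$ (homogeneous Robin) on $t=0$, exactly as recorded in the introduction. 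The $L^\infty$ bound is the easier half: since $u_\e$ minimizes $\Ecur$, truncating $u_\e$ at radius $1$ (i.e.\ replacing $u_\e$ by $u_\e/\max\{1,|u_\e|\}$) does not increase $|\grad u_\e|^2$ pointwise and does not increase $|\Cur u_\e|^2$ pointwise either (this is a standard pointwise inequality for the radial truncation, valid since truncation only shrinks derivatives), while it strictly decreases the potential term wherever $|u_\e|>1$; since the truncated map still lies in $H^1_T(\Omega)$, minimality forces $|u_\e|\le 1$ a.e., giving $C_1 = 1$. (One can instead run a Moser iteration or a De Giorgi argument on the scalar equation satisfied by $|u_\e|^2$ if one prefers to avoid the truncation argument, but the truncation is cleanest.)

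For the Lipschitz bound I would follow the strategy of \cite{bcs24}: rescale at scale $\e$, reduce to an interior/half-space estimate with a right-hand side of size $O(1)$, and invoke elliptic regularity. Concretely, fix $x_0\in\overline\Omega$ and set $v(y) = u_\e(x_0 + \e y)$ on $B_1$ (or on a half-ball $B_1^+$ flattened via the Fermi coordinates if $x_0$ is within distance $\e$ of $\Gamma$). Using the $L^\infty$ bound, the rescaled equation reads $-\Delta v - k\grad^\perp\Cur v = v(1-|v|^2)$ with right-hand side bounded by $1$ in $L^\infty$, and $\|v\|_{L^\infty}\le 1$. The operator $-\Delta u - k\grad^\perp\Cur u$ is a constant-coefficient, strongly elliptic $2\times 2$ system (its principal symbol is $|\xi|^2 I + k\,\xi^\perp\otimes\xi^\perp$, which is positive definite for $k>0$), so interior $W^{2,p}$ and then $C^{1,\alpha}$ estimates apply on $B_{1/2}$, yielding $\|\grad v\|_{L^\infty(B_{1/2})}\le C$, i.e.\ $\|\grad u_\e\|_{L^\infty}\le C/\e$ in the interior. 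The boundary case is where the Robin condition must be handled: after flattening, one has the Dirichlet-Robin system on the half-ball, and the key claim — flagged in the introduction as "the additional boundary term can be absorbed into the differential operator while preserving ellipticity" — is that this mixed system satisfies the complementing (Lopatinski--Shapiro) condition, so that the Agmon--Douglis--Nirenberg boundary regularity theory gives $W^{2,p}$ up to $\{t=0\}$; alternatively, one reflects $u_\tau$ across $t=0$ by an odd/even reflection adjusted to kill the Robin term (extending by $u_\tau(s,-t) = u_\tau(s,t) + $ a correction linear in $t$ built from $\kappa$, absorbed into the lower-order coefficients), reflects $u_n$ oddly, and checks that the extended pair solves an elliptic system with bounded (merely $L^\infty$ or $C^{0,1}$) coefficients and bounded right-hand side on a full ball, then applies interior estimates.

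The main obstacle is precisely the treatment of the Robin boundary condition under reflection. Unlike the Neumann case in \cite{bcs24}, a naive even reflection of $u_\tau$ does not produce a weak solution across $\{t=0\}$ because of the $k\kappa u_\tau$ term; one must either (a) verify the complementing condition for the full Dirichlet-Robin system directly — which requires checking that the boundary operator symbol, restricted to the stable subspace of the ODE obtained by Fourier transform in $s$, is invertible, a computation that uses $k>0$ crucially — or (b) design a reflection that introduces only lower-order (zeroth- and first-order) terms with coefficients controlled by $\|\kappa\|_{C^{0,1}}$, which is finite because $\Gamma\in C^{3,1}$, and then note that after rescaling by $\e$ these lower-order coefficients are multiplied by positive powers of $\e$ and hence are harmless. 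I expect route (b), following \cite{bcs24} as closely as possible, to be the one taken. A secondary technical point is that the Fermi-coordinate change has variable coefficients, so strictly speaking the rescaled operator is not quite constant-coefficient; but its coefficients are $\e$-close to constants on $B_1$ (since the coordinate change is $C^{2,1}$ and we are zooming in at scale $\e$), so a standard perturbation/freezing-coefficients argument upgrades the constant-coefficient estimate to the variable-coefficient one for $\e$ small, which is why the conclusion holds only for $\e\in(0,\e_0)$.
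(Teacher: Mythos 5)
Your proposed route to the $L^\infty$ bound does not work, and this is precisely where the difficulty of the anisotropic problem lies. The radial truncation $u\mapsto u/\max\{1,|u|\}$ decreases the full Dirichlet integral $\int|\nabla u|^2$ pointwise, but it does \emph{not} decrease $(\Cur u)^2$ (nor $(\Div u)^2$) pointwise. A simple counterexample: take $u(x)=(A+x_1+x_2,\,x_1)$ with $A>1$, so $|u|>1$ near the origin and $\Cur u\equiv 0$; yet for the truncation $v=u/|u|$ one computes $\Cur v(0)=1/A\neq 0$. Because of this, the truncated competitor need not have lower $\Ecur$-energy, so one cannot conclude $|u_\e|\le 1$, and in fact the paper's constant $C_1$ is not claimed to equal $1$. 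The paper therefore derives the $L^\infty$ bound not from a maximum-principle or truncation argument but from the same elliptic-regularity bootstrap used for the Lipschitz estimate: after reflection and rescaling one obtains a $W^{2,4/3}$ bound for the rescaled solution, which requires a uniform $L^4$ bound on $u_\e$; that $L^4$ bound is in turn supplied by the matching upper and lower $\pi|\ln\e|$ energy estimates, which control $\frac{1}{\e^2}\int(1-|u_\e|^2)^2$. Sobolev embedding and Morrey then give the $L^\infty$ bound, and iterating the bootstrap to $W^{2,p}$ with $p>2$ gives the Lipschitz bound. Since your scheme used the $L^\infty$ bound as an input to the Lipschitz step, the gap in the first half propagates to the whole argument.

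On the Lipschitz step and the treatment of the Robin condition, your overall strategy (reflection-extension at scale $\e$, then interior elliptic regularity) matches the paper, but the mechanism for accommodating the Robin boundary term is different. You propose modifying the reflection by adding a correction linear in $t$ so as to kill the Robin mismatch. The paper instead keeps the same even/odd reflection $U_\e(x)=A(x)u_\e(R(x))$, $A=I_2-2nn^T$, used in the divergence-penalized case, accepts that the reflected map satisfies the original equation only to leading order, and shows that the discrepancy produced by the Robin condition appears as an \emph{additional first-order term} in the extended weak formulation (see \eqref{def:ExtendedPDE}); it then verifies that the resulting system still satisfies the Legendre--Hadamard condition, so interior elliptic estimates apply on the two-sided collar. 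Both strategies could in principle be made rigorous, but the paper's choice avoids modifying the extension map itself and keeps the structure of \cite{bcs24} intact, which is what makes the glueing calculations in Appendix~\ref{app:calculations} tractable.
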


The proof of this theorem is inspired by \cite{bcs24} but needs some important modifications to handle the Dirchlet-Robin boundary condition \eqref{eq:ELcurlstrong} instead of the Dirichlet-Neumann in the divergence penalized case \eqref{eq:ELdivstrong}.
In particular, a fundamental difference is that the Dirichlet-Neumann boundary conditions from \cite{bcs24} ensured that the extended function satisfied an elliptic equation of the same form while the Dirichlet-Robin boundary conditions can only establish this to highest order.
To compensate for this, additional first-order terms must be included in the elliptic operator.
This obstruction is handled rigorously in Subsection~\ref{subsec:glueing} and Appendix~\ref{app:calculations}.
As a result of the additional terms, the ``glueing" procedure requires an
addendum in order to extend to this setting.
Fortunately, ellipticity is determined by the highest order terms of the PDE and so the strategy from \cite{bcs24} is still admissible.

The rest of this section is devoted to the proof of Theorem~\ref{thm:a-priori-curl}.
As in \cite{bcs24}, we first introduce local coordinates that allow us to rewrite the PDE in \eqref{eq:ELcurlstrong} in a geometrically adapted frame which can then be extended via a reflection across the boundary $\partial\Omega$, see Subsection~\ref{subsec:coord}.
It is in this extension, that one needs to account for the different boundary condition, giving rise to an additional term in the extended PDE, see Lemma~\ref{lem:pdeext} in Subsection~\ref{subsec:glueing} and Appendix~\ref{app:calculations}.
To apply elliptic regularity estimates, we check in Subsection~\ref{subsec:ellip} that the differential operator of the extended PDE satisfies the Legendre-Hadamard condition, which allows us to conclude the proof.
Finally, in Subsection~\ref{subsec:regproof} we combine
the previous work and obtain our desired conclusion.
In addition, in Subsection \ref{subsec:glueing} we provide an
extended discussion surrounding elliptic regularity results
pertaining to elliptic systems of PDEs.
While results of this nature can be obtained by piecing together
known theorems we centralize this and provide a reference.


\subsection{Local Coordinates}\label{subsec:coord}

Here we provide a few remarks regarding the choice of local coordinates about $\Gamma=\partial\Omega$.
These will be used to define the extension.
A formal construction has been provided in \cite{ABC} so we only outline the properties needed for calculations.\\

As we will work frequently in a neighbourhood of $\Gamma$,
it will be convenient to introduce a suitable coordinate system.
In order to define this, we parametrize $\Gamma$ by its
arclength, $L$, using a positively oriented $C^{3,1}$ curve
$\gamma=(\gamma_{1}(y_{1}),\gamma_{2}(y_{1}))$
where $\gamma\colon\mathbb{R}\slash{}L\mathbb{Z}\to\Gamma$.
Using this, we define the positively oriented Frenet frame
$\{n,\tau\}$ where $n$ is taken to be the outward unit normal and
$\tau$ is an appropriate unit tangent vector.
Specifically, we define these quantities, for
$y_{1}\in\mathbb{R}\slash{}L\mathbb{Z}$, by
\begin{equation*}
    \tau(y_{1})\coloneqq\gamma'(y_{1}),\hspace{15pt}
    n(y_{1})\coloneqq-(\tau(y_{1}))^{\perp}.
\end{equation*}
This frame satisfies the sign-modified Frenet-Serret formulas
\begin{align*}
\left\{
\begin{array}{l}
\partial_{\tau}n = \kappa\tau\\[0.5em]
\partial_{\tau}\tau=-\kappa n
\end{array}
\right.
\end{align*}
where $\kappa=\kappa(x)$ is the curvature of the boundary at $x\in\Gamma$.
Using the positive frame $\{n,\tau\}$ defined near $\Gamma$ we may introduce a coordinate system defined in a tubular
neighbourhood of $\Gamma$.
To do this we begin by introducing the $C^{2,1}$ map
$X\colon(\mathbb{R}\slash{}L\mathbb{Z})\times(0,r_{0})\to
\Omega_{r_{0}}$ defined
by
\begin{equation}\label{def:precoord}
    X(y_{1},y_{2})\coloneqq\gamma(y_{1})-y_{2}n(y_{1}).
\end{equation}
Next, we cover a tubular neighbourhood of $\Gamma$ by sets $\{\mathcal{U}_{j}\}_{j=1}^{N}$, of the form $X(B_{r_{1}}(b_{j},0)\cap\{y_{2}>0\})$ for $j=1,2,\ldots,N$ where $r_{1}>0$ is a suitably chosen constant
dependent only on quantities related to $\Omega$, and
$b_{j}$ for $j=1,2,\ldots,N$ are appropriate centers on
$\mathbb{R}\slash{}L\mathbb{Z}$ chosen to form a cover of $\Gamma$.
Then, we define local coordinate
maps $\psi_{j}\colon{}B_{r_{1},+}(0)\to\mathcal{U}_{j}$ by
\begin{equation}\label{def:coord}
	\psi_{j}(y_{1},y_{2})\coloneqq{}X(b_{j}+y_{1},y_{2})
\end{equation}
and
\begin{equation*}
	B_{r_{1},+}(0)\coloneqq{}\bigl\{y\in{}B_{r_{1}}(0):y_{2}>0\bigr\}.
\end{equation*}
Since we will typically work inside a single coordinate chart and
translation will not affect the quantities we need to calculate
we will, for notational convenience, assume $b_{j}=0$.
As it will be helpful for calculations, we note that
\begin{equation*}
	\tau'(y_{1})=-\kappa(y_{1})n(y_{1}),\hspace{20pt}
	n'(y_{1})=\kappa(y_{1})\tau(y_{1})
\end{equation*}
as well as that
\begin{align*}
	\nabla\psi_{j}(y)&=\Bigl[(1-y_{2}\kappa(y_{1}))\tau(y_{1}),-n(y_{1})\Bigr],\\
	\nabla\psi_{j}^{-1}(x)&=
	\begin{pmatrix}
		\bigl(1-(\psi_{j}^{-1})^{2}(x)\kappa\bigl((\psi_{j}^{-1})^{1}(x)\bigr)\bigr)^{-1}\tau\bigl((\psi_{j}^{-1})^{1}(x)\bigr)^{T}\\
		-n\bigl((\psi_{j}^{-1})^{1}(x)\bigr)^{T}
	\end{pmatrix}.
\end{align*}
We observe that we can express a map $u\colon\Omega\to\mathbb{R}^{2}$ using these coordinates as
\begin{equation*}
	u(x)=\widetilde{u}_{\tau}\bigl((\psi_{j}^{-1})(x)\bigr)\tau\bigl((\psi_{j}^{-1})^{1}(x)\bigr)
	+\widetilde{u}_{n}\bigl((\psi_{j}^{-1})(x)\bigr)n\bigl((\psi_{j}^{-1})^{1}(x)\bigr)
\end{equation*}
where $x\in\mathcal{U}_{j}$ and $j=1,2,\ldots,N$.
We compute that
\begin{align}
    \partial_{x_{i}}u_{\e}(x)=&
    \bigl[\partial_{x_{i}}\widetilde{\psi}_{j}^{-1}(x)^{T}\nabla(\widetilde{u}_{\e})_{\tau}\bigl(\widetilde{\psi}_{j}^{-1}(x)\bigr)\bigr]
    \tau\bigl((\widetilde{\psi}_{j}^{-1})^{1}(x)\bigr)
    \label{eq:partialderiv}\\
    &-(\widetilde{u}_{\e})_{\tau}\bigl(\widetilde{\psi}_{j}^{-1}(x)\bigr)
    \partial_{x_{i}}(\widetilde{\psi}_{j}^{-1})^{1}(x)
    \kappa\bigl((\widetilde{\psi}_{j}^{-1})^{1}(x)\bigr)
    n\bigl((\widetilde{\psi}_{j}^{-1})^{1}(x)\bigr)
    \nonumber\\
    &+\bigl[\partial_{x_{i}}\widetilde{\psi}_{j}^{-1}(x)^{T}\nabla(\widetilde{u}_{\e})_{n}\bigl(\widetilde{\psi}_{j}^{-1}(x)\bigr)\bigr]n\bigl((\widetilde{\psi}_{j}^{-1})^{1}(x)\bigr)
    \nonumber
    \\
    &+(\widetilde{u}_{\e})_{n}\bigl(\widetilde{\psi}_{j}^{-1}(x)\bigr)
    \partial_{x_{i}}(\widetilde{\psi}_{j}^{-1})^{1}(x)
    \kappa\bigl((\widetilde{\psi}_{j}^{-1})^{1}(x)\bigr)
    \tau\bigl((\widetilde{\psi}_{j}^{-1})^{1}(x)\bigr).\nonumber
\end{align}
As a consequence we have
\begin{align}
	\nabla{}u(x)n(x)&=-\partial_{y_{2}}\widetilde{u}_{\tau}\bigl(\psi_{j}^{-1}(x)\bigr)\tau\bigl((\psi_{j}^{-1})^{1}(x)\bigr)
	-\partial_{y_{2}}\widetilde{u}_{n}\bigl(\psi_{j}^{-1}(x)\bigr)n\bigl((\psi_{j}^{-1})^{1}(x)\bigr),
    \label{eq:normderiv}\\
	\Cur u(x)&=
	-\partial_{y_{2}}\widetilde{u}_{\tau}\bigl(\psi_{j}^{-1}(x)\bigr)
	-\frac{\partial_{y_{1}}\widetilde{u}_{n}\bigl(\psi_{j}^{-1}(x)\bigr)}{1-(\psi_{j}^{-1})^{2}(x)\kappa\bigl((\psi_{j}^{-1})^{1}(x)\bigr)}
	+\frac{\kappa\bigl((\psi_{j}^{-1})^{1}(x)\bigr)\widetilde{u}_{\tau}\bigl(\psi_{j}^{-1}(x)\bigr)}
	{1-(\psi_{j}^{-1})^{2}(x)\kappa\bigl((\psi_{j}^{-1})^{1}(x)\bigr)}.
    \label{eq:curlderiv}
\end{align}
In particular, if $x\in\Gamma$ it holds $(\psi_{j}^{-1})^{2}(x)=0$ and thus we obtain
\begin{align*}
	\nabla{}u(x)n(x)&=-\partial_{y_{2}}\widetilde{u}_{\tau}\bigl(\psi_{j}^{-1}(x)\bigr)\tau\bigl((\psi_{j}^{-1})^{1}(x)\bigr)
	-\partial_{y_{2}}\widetilde{u}_{n}\bigl(\psi_{j}^{-1}(x)\bigr)n\bigl((\psi_{j}^{-1})^{1}(x)\bigr),\\
	\Cur u(x)&=
	-\partial_{y_{2}}\widetilde{u}_{\tau}\bigl(\psi_{j}^{-1}(x)\bigr)
	-\partial_{y_{1}}\widetilde{u}_{n}\bigl(\psi_{j}^{-1}(x)\bigr)
	+\kappa\bigl((\psi_{j}^{-1})^{1}(x)\bigr)\widetilde{u}_{\tau}\bigl(\psi_{j}^{-1}(x)\bigr).
\end{align*}
For convenience, when $x\in\mathcal{U}_{j}$ for $j=1,2,\ldots,N$, we introduce the notation
\begin{equation*}
    \tau(x)\coloneqq\tau\bigl((\psi_{j}^{-1})^{1}(x)\bigr),\hspace{5pt}
    n(x)\coloneqq{}n\bigl((\psi_{j}^{-1})^{1}(x)\bigr)
    \hspace{5pt}
\end{equation*}
as well as
\begin{equation*}
    u_{\tau}(x)\coloneqq{}\widetilde{u}_{\tau}\bigl(\psi_{j}^{-1}(x)\bigr),\hspace{5pt}
    u_{n}(x)\coloneqq{}\widetilde{u}_{n}\bigl(\psi_{j}^{-1}(x)\bigr).
\end{equation*}
Using this notation we have that for $x\in\mathcal{U}_{j}$ and $j=1,2,\ldots,N$ that
\begin{equation}\label{eq:frenetdecomp}
\begin{split}
u&=u_nn+u_{\tau}\tau,\\[0.5em]
\partial_nu&=\partial_nu_nn+\partial_nu_{\tau}\tau,\\[0.5em]
\partial_{\tau}u&=-\kappa u_{\tau}n+\partial_{\tau}u_{\tau}\tau.
\end{split}
\end{equation}
Through decompositions \eqref{eq:frenetdecomp}, an expansion of $\Cur u$ provides a more familiar view of the boundary conditions associated to \eqref{eq:ELcurlstrong}, namely that they can be rewritten as the Dirichlet-Robin pair
\begin{equation*}
\left\{
\begin{alignedat}{2}
u_{n}&=0\quad && \mbox{on}\ \Gamma,\\
(1+k)\partial_nu_{\tau}+k\kappa u_{\tau}&=0\quad && \mbox{on}\ \Gamma.
\end{alignedat}
\right.
\end{equation*}
Through a similar formula we may define an extension of \eqref{def:coord}, denoted
$\widetilde{\psi}_{j}\colon{}B_{r_{1}}(0)\to\widetilde{\mathcal{U}}_{j}$, where $\{\widetilde{\mathcal{U}}_{j}\}_{j=1}^{N}$ cover a two-sided
tubular neighbourhood of $\Gamma$.
By adjoining one additional set, $\widetilde{\mathcal{U}}_{0}$, which covers all points of $\Omega$ of sufficient distance from the boundary,
we obtain a cover of an open set $\widetilde{\Omega}$ which contains $\Omega$.
Pairing $\widetilde{\mathcal{U}}_{0}$ with the identity map we obtain an atlas for $\widetilde{\Omega}$.
We refer to \cite{ABC,bcs24} for more details of the construction.

If $u_{\varepsilon}$ is a minimizer of \eqref{def:CurlEnergy} in $H_{T}^{1}(\Omega;\mathbb{R}^{2})$ then we define an extension,
$U_{\varepsilon}$, in the following way:
\begin{equation}\label{def:extension}
	U_{\varepsilon}(x)\coloneqq
	\begin{cases}
		u_{\varepsilon}(x)&\text{if }x\in\Omega\, , \\
		A(x)u_{\varepsilon}\bigl(R(x)\bigr)&\text{if }x\in\widetilde{\Omega}\setminus\Omega\, ,
	\end{cases}
\end{equation}
where, for $x\in\widetilde{\mathcal{U}}_{j}$, 
\begin{align}
	A(x)&\coloneqq{}I_{2}-2n\bigl((\widetilde{\psi}_{j}^{-1})^{1}(x)\bigr)n\bigl((\widetilde{\psi}_{j}^{-1})^{1}(x)\bigr)^{T}\, ,\\
	R(x)&\coloneqq\widetilde{\psi}_{j}\Bigl(\Bigl[I_{2}-2\mathbf{e}_{2}\mathbf{e}_{2}^{T}\Bigr]\widetilde{\psi}_{j}^{-1}(x)\Bigr)
    \, .\label{eq:BdMirror}
\end{align}


\subsection{PDE Extension}\label{subsec:glueing}

In this subsection, we extend the PDE \eqref{eq:ELcurlstrong} satisfied by $u_{\varepsilon}$ on $\Omega$ to one satisfied by $U_{\varepsilon}$ on the extended domain $\widetilde{\Omega}$.
To do this,
we provide the main points of the necessary
computations needed to prove this.
For the interested reader, we provide an appendix containing additional
computational details.\\

Similar to \cite{bcs24}, we introduce a few preliminary
definitions and calculations needed in the proof before preceding.
Since $\widetilde{\mathcal{U}}_{0}\subset\Omega$ and
$U_{\e}=u_{\e}$ here
then we can focus on demonstrating that a PDE is satisfied by
$U_{\e}$ on $\bigcup_{j=1}^{N}\widetilde{\mathcal{U}}_{j}$.
Notice that we may further reduce the problem to demonstrating the glueing
to functions whose support is contained in
$\widetilde{\mathcal{U}}_{j}$ for $j=1,2,\ldots,N$ by appealing to a
partition of unity.
Using this reduction, we may assume that the test function, $v$, satisfies
$v\in{}H_{0}^{1}(\widetilde{\mathcal{U}}_{j};\mathbb{R}^{2})$
for some $j=1,2,\ldots,N$.
In addition, we may make use of the local coordinate descriptions
of $U_{\e}$ and $u_{\e}$ on each of $\widetilde{\mathcal{U}}_{j}$ for
$j=1,2,\ldots,N$.
We express $u_{\e}$ on $\widetilde{\mathcal{U}}_{j}$ for
$j=1,2,\ldots,N$ as
\begin{equation*}
    u_{\e}(x)=
    (\widetilde{u}_{\e})_{\tau}\bigl(\widetilde{\psi}_{j}^{-1}(x)\bigr)
    \tau\bigl((\widetilde{\psi}_{j}^{-1})^{1}(x)\bigr)
    +(\widetilde{u}_{\e})_{n}\bigl(\widetilde{\psi}_{j}^{-1}(x)\bigr)
    n\bigl((\widetilde{\psi}_{j}^{-1})^{1}(x)\bigr)
\end{equation*}
and we observe that equations \eqref{eq:partialderiv},
\eqref{eq:normderiv}, and \eqref{eq:curlderiv} all hold.
Notice that these identities remain valid for any suitable function $v$ when
expressed in tangent-normal coordinates.

Next, for $j=1,2,\ldots,N$ we define the function
$\sigma_{j}\colon\widetilde{\mathcal{U}}_{j}\to\widetilde{\mathcal{U}}_{j}$ by
\begin{equation*}
    \sigma_{j}(x)\coloneqq
    \begin{cases}
        x& \text{for }x\in\widetilde{\mathcal{U}}_{j}\cap\Omega\\
        R(x)&
        \text{for }x\in\widetilde{\mathcal{U}}_{j}\setminus\Omega
    \end{cases}
\end{equation*}
where $R$ is as defined in \eqref{eq:BdMirror}.
This function will be used on the enlarged coordinate chart $\widetilde{\mathcal{U}}_{j}$ to swap between an exterior point to $\Omega$ into its interior counterpart while leaving interior points invariant.
This function will be imperative for the glueing argument as we
intend to make use of the structure of the extension as well as
information valid in the interior.
We also introduce, for $j=1,2,\ldots,N$,
the function $\mathfrak{R}_{j}$, defined on
$\widetilde{\mathcal{U}}_{j}\times\mathbb{R}^{2}$ by
\begin{equation*}
    \mathfrak{R}_{j}(x,z)\coloneqq
    \Bigl[\tau\bigl((\widetilde{\psi}_{j}^{-1})^{1}(x)\bigr)
    \tau\bigl((\widetilde{\psi}_{j}^{-1})^{1}(x)\bigr)^{T}-
    n\bigl((\widetilde{\psi}_{j}^{-1})^{1}(x)\bigr)
    n\bigl((\widetilde{\psi}_{j}^{-1})^{1}(x)\bigr)^{T}
    \Bigr]z,
\end{equation*}
which corresponds to the function $R$ written in the original coordinate
system.
We observe that
\begin{equation*}
    \nabla\sigma_{j}(\sigma_{j}(x))\nabla\sigma_{j}(\sigma_{j}(x))^{T}=
    \begin{cases}
        I_{2}&\text{for }x\in\widetilde{\mathcal{U}}_{j}\cap\Omega,\\
        \nabla\widetilde{\psi}_{j}\bigl(\widetilde{\psi}_{j}^{-1}(x)\bigr)
        \mathcal{M}(x)
        \nabla\widetilde{\psi}_{j}\bigl(\widetilde{\psi}_{j}^{-1}(x)\bigr)^{T}&\text{for }x\in\widetilde{\mathcal{U}}_{j}\setminus\Omega,
    \end{cases}
\end{equation*}
where
\begin{equation*}
    \mathcal{M}(x)\coloneqq
    \begin{pmatrix}
            \frac{1}{\bigl(1-|(\widetilde{\psi}_{j}^{-1})^{2}(x)|
            \kappa\bigl((\widetilde{\psi}_{j}^{-1})^{1}(x)\bigr)\bigr)^{2}}& 0\\
            0& 1
    \end{pmatrix}.
\end{equation*}
Corresponding to this matrix we introduce the inner product defined for $x\in\widetilde{\mathcal{U}}_{j}$, for $j=1,2,\ldots,N$, by
\begin{equation*}
    \bigl<v,w\bigr>_{j}\coloneqq{}
    |\det(\nabla\sigma_{j}(x))|
    v^{T}\nabla\sigma_{j}\bigl(\sigma_{j}(x)\bigr)
    \nabla\sigma_{j}\bigl(\sigma_{j}(x)\bigr)^{T}w
    \, ,
\end{equation*}
where $v,w\in\mathbb{R}^{2}$.
This inner product will enter when verifying that the PDEs glue properly and to simplify the notation hereafter.
In addition, we introduce the following \emph{distortion factor}
\begin{equation*}
    \mathcal{D}_{j}(x)\coloneqq
    \begin{cases}
        1& \text{for }x\in\widetilde{\mathcal{U}}_{j}\cap\Omega,\\
        \frac{1-(\widetilde{\psi}_{j}^{-1})^{2}(x)\kappa\bigl((\widetilde{\psi}_{j}^{-1})^{1}(x)\bigr)}
        {1+(\widetilde{\psi}_{j}^{-1})^{2}(x)\kappa\bigl((\widetilde{\psi}_{j}^{-1})^{1}(x)\bigr)},
        &\text{for }x\in\widetilde{\mathcal{U}}_{j}\setminus\Omega,
    \end{cases}
\end{equation*}
that accounts for the deformation due to the change of variables from outside to the inside of the domain.
Using this distortion factor, we define
\begin{equation*}
    \Curj(w)(x)\coloneqq
    |\det(\nabla\sigma_{j}(x))|^{\frac{1}{2}}
    \biggl[
    \partial_{n}\biggl(w(x)\cdot\tau\bigl((\widetilde{\psi}_{j}^{-1})^{1}(x)\bigr)\biggr)
    -\mathcal{D}_{j}(x)\partial_{\tau}\biggl(w(x)\cdot{}n\bigl((\widetilde{\psi}_{j}^{-1})^{1}(x)\bigr)\biggr)
    \biggr]
\end{equation*}
for $x\in\widetilde{\mathcal{U}}_{j}$
for $j=1,2,\ldots,N$ and functions,
$w$, of appropriate regularity.
In particular, we use this notation to denote a quantity resembling curl
but including a compensating distortion factor.
This compensating factor ultimately arises from metric distortion in
the exterior of the domain and can be computed from the traditional
Euclidean curl in the interior of the domain after a reflection
change of variables.

Finally, for notational convenience, we let
$\mathcal{G}_{j}\colon\widetilde{\Omega}\times{}B_{r_{1}}(0)
\to{}M_{2\times2}(\mathbb{R})$, for $j=1,2,\ldots,N$,
denote the matrix-valued functions given by
\begin{equation*}
    \mathcal{G}_j(x,y)\coloneqq
    \begin{pmatrix}
        \frac{1}{(1-y_{2}\kappa(y_{1}))^{2}}& 0\\
        0& 1
    \end{pmatrix}
    \, ,
\end{equation*}
where $\kappa(y_1)$ denotes the curvature of $\Gamma=\partial\Omega$ at the point $x=\widetilde{\psi}_j(y_1,0)$.
With this notation in place we are ready to glue the PDEs together.\\

\begin{lem}\label{lem:pdeext}
    Suppose $\Omega\subset\mathbb{R}^{2}$ is an open, bounded, and
    simply connected set with $C^{3,1}$ boundary.
    Suppose also that $u_{\e}$ is a minimizer to \eqref{def:CurlEnergy}
    and that $U_{\e}$ is its corresponding extension to $\widetilde{\Omega}$
    defined as in \eqref{def:extension}.
    Then for $j=1,2,\ldots,N$ we have that $U_{\e}$ satisfies
    \begin{align}
    &\sum_{i=1}^{2}\int_{\widetilde{\mathcal{U}}_{j}}\!{}
    \bigl<\nabla{}U_{\e}^{i},\nabla{}v^{i}\bigr>_{j}
    +k\int_{\widetilde{\mathcal{U}}_{j}}\!{}
    (\Curj U_{\e})(\Curj v)
    \nonumber\\
    &+k\int_{\widetilde{\mathcal{U}}_{j}}\!{}
    |\det(\nabla\sigma_{j}(x))|\mathcal{D}_{j}(x)
    \biggl[\frac{\kappa\bigl((\widetilde{\psi}_{j}^{-1})^{1}(x)\bigr)U_{\e}(x)\cdot\tau\bigl((\widetilde{\psi}_{j}^{-1})^{1}(x)\bigr)}
    {1-(\widetilde{\psi}_{j}^{-1})^{2}(x)\kappa\bigl((\widetilde{\psi}_{j}^{-1})^{1}(x)\bigr)}\biggr]
    \nabla\bigl[v(x)\cdot{}n\bigl(\widetilde{\psi}_{j}^{-1}(x)\bigr)\bigr]\cdot{}n\bigl(\widetilde{\psi}_{j}^{-1}(x)\bigr)
    \nonumber\\
    =&\int_{\widetilde{\mathcal{U}}_{j}\cap\Omega}\!{}
    \frac{U_{\e}(x)\cdot{}v(x)}{\varepsilon^{2}}(1-|U_{\e}(x)|^{2})
    \nonumber\\
    &+\int_{\widetilde{\mathcal{U}}_{j}\setminus\Omega}\!{}
    |\det(\nabla\sigma_{j}(x))|
    \frac{\mathfrak{R}_{j}\bigl(x,U_{\e}(x)\bigr)\cdot{}v(x)}{\varepsilon^{2}}(1-|U_{\e}(x)|^{2})
    \nonumber\\
    &+\int_{\widetilde{\mathcal{U}}_{j}}\!{}\widetilde{\mathcal{F}}_{j}
    \bigl(x,U_{\e}(x),\nabla{}U_{\e}(x)\bigr)\cdot{}v(x)
    \label{def:ExtendedPDE}
\end{align}
where $\widetilde{F}_{j}$ satisfies
\begin{equation}\label{eq:carathmapApp}
    |\widetilde{\mathcal{F}}_{j}(x,z,p)|\le{}C(\Omega,k)\bigl[1+|z|+|p|\bigr],\hspace{15pt}
    |\nabla_{z,p}\widetilde{\mathcal{F}}_{j}(x,z,p)|\le{}C(\Omega,k).
\end{equation}
\end{lem}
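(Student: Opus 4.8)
The plan is to start from the weak Euler--Lagrange equation \eqref{eq:ELcurlweak} satisfied by $u_{\e}$ on $\Omega$, localize it to a single chart $\widetilde{\mathcal{U}}_{j}\cap\Omega$ using the partition of unity reduction already described, and then perform the reflection change of variables $x\mapsto R(x)$ on the exterior piece $\widetilde{\mathcal{U}}_{j}\setminus\Omega$ to rewrite every integral over $\widetilde{\mathcal{U}}_{j}\setminus\Omega$ as an integral over $\widetilde{\mathcal{U}}_{j}\cap\Omega$. The key point is that for a test function $v\in H_{0}^{1}(\widetilde{\mathcal{U}}_{j};\mathbb{R}^{2})$, the reflection pairs up the interior contribution of $v$ on $\widetilde{\mathcal{U}}_{j}\cap\Omega$ with the pullback of the exterior contribution; since $U_{\e}(x)=A(x)u_{\e}(R(x))$ by \eqref{def:extension}, the exterior integrand becomes, after substitution, an integrand in $u_{\e}$ on the interior that can be combined with the genuine interior term. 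The Jacobian of this substitution is $|\det(\nabla\sigma_{j}(x))|$, which is exactly why the inner product $\langle\cdot,\cdot\rangle_{j}$ and the factor $\mathcal{D}_{j}$ are built the way they are: they are the pushed-forward Euclidean gradient inner product and curl, respectively, carrying the metric distortion $\mathcal{M}(x)$ through the reflection.

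The main work is bookkeeping on the curl term. First I would substitute the Frenet-frame expression \eqref{eq:curlderiv} for $\Cur u_{\e}$ (and the analogous one for $\Cur v$) into $k\int_{\Omega}(\Cur u_{\e})(\Cur v)$, producing three types of terms: a $\partial_{n}u_{\tau}$--$\partial_{n}v_{\tau}$ piece, the mixed $\partial_{\tau}u_{n}$/$\partial_{\tau}v_{n}$ pieces, and the curvature terms $\kappa u_{\tau}$, $\kappa v_{\tau}$, each divided by the metric factor $1-y_{2}\kappa$. The first two combine into $k\int (\Curj U_{\e})(\Curj v)$ once the reflection is applied and the distortion factor $\mathcal{D}_{j}$ is inserted to match the metric on the two sides. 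The genuinely new phenomenon, compared to the divergence case in \cite{bcs24}, is that the curvature cross-terms do not assemble into anything as clean; the piece $k\int \mathcal{D}_{j}\,[\kappa U_{\e}\cdot\tau/(1-y_{2}\kappa)]\,\nabla[v\cdot n]\cdot n$ is precisely the leftover curvature term displayed in \eqref{def:ExtendedPDE}, which has to be isolated and kept explicitly. All remaining lower-order contributions — the curvature--curvature products $\kappa^{2}u_{\tau}v_{\tau}$, the terms arising from differentiating the frame vectors $\tau,n$ and the chart map (equation \eqref{eq:partialderiv}), and the discrepancies between the true exterior metric $\mathcal{G}_{j}$ and its interior counterpart — get collected into the remainder $\widetilde{\mathcal{F}}_{j}(x,U_{\e},\nabla U_{\e})$, which is affine in $(z,p)$ with $C^{3,1}$-boundary–controlled coefficients, hence satisfies \eqref{eq:carathmapApp}.

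I would organize the computation as follows: (1) write out \eqref{eq:ELcurlweak} restricted to $\widetilde{\mathcal{U}}_{j}\cap\Omega$ with test functions supported there; (2) for the exterior, use $U_{\e}=A u_{\e}\circ R$, change variables via $\sigma_{j}$, and verify that $\int_{\widetilde{\mathcal{U}}_{j}\setminus\Omega}\langle\nabla U_{\e}^{i},\nabla v^{i}\rangle_{j}$ equals $\int$ over the interior of the corresponding expression in $u_{\e}$ and the reflected test function $\mathfrak{R}_{j}(\cdot,v)$ — this is where $\nabla\sigma_{j}(\sigma_{j}(x))\nabla\sigma_{j}(\sigma_{j}(x))^{T}=\nabla\widetilde\psi_{j}\,\mathcal{M}\,\nabla\widetilde\psi_{j}^{T}$ is used; (3) do the same for the curl term, tracking the curvature terms separately and defining $\mathcal{D}_{j}$ so the metric factors on interior and exterior match to leading order; (4) add the interior and (transported) exterior identities: since a general $v\in H_{0}^{1}(\widetilde{\mathcal{U}}_{j})$ restricted to $\Omega$ together with $\mathfrak{R}_{j}(\cdot,v)$ restricted to $\Omega$ is a valid test function for \eqref{eq:ELcurlweak} after symmetrization, the two halves of the left-hand side combine, the potential terms give the two integrals on the right, and everything not captured by $\langle\cdot,\cdot\rangle_{j}$, $\Curj$, and the explicit curvature term lands in $\widetilde{\mathcal{F}}_{j}$; (5) finally, estimate $\widetilde{\mathcal{F}}_{j}$: each term in it is a product of a bounded $C^{2,1}$ geometric coefficient (controlled by $\|\kappa\|_{C^{0,1}}$ and hence by $\Omega$) with either a component of $U_{\e}$ or a first derivative of $U_{\e}$, giving the linear growth bound, and differentiating in $(z,p)$ removes the $U_{\e}$-dependence leaving only the bounded coefficients, giving the second bound in \eqref{eq:carathmapApp}. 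The main obstacle I anticipate is step (3)–(4): making sure the curvature cross-terms that cannot be absorbed into $\widetilde{\mathcal{F}}_{j}$ (because they involve $\nabla v$, not just $v$) are exactly the single explicit term written in \eqref{def:ExtendedPDE}, and checking that this term is consistent on both sides of $\Gamma$ under the reflection — equivalently, that no analogous $\nabla v$–level curvature term survives from the interior that would fail to match it. The detailed verification of this matching is exactly what the excerpt defers to Appendix~\ref{app:calculations}.
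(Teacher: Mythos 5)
Your proposal follows essentially the same route as the paper: localize via a partition of unity, work in the Frenet-frame tubular coordinates, change variables by the reflection $\sigma_j$ on the exterior piece, and exploit the fact that symmetrizing the test function against the involution $w\mapsto\mathfrak{R}_j(x,w(\sigma_j(x)))$ produces an admissible competitor for \eqref{eq:ELcurlweak} (because its normal trace vanishes on $\Gamma$), so the two halves of the identity combine. The only point worth flagging is that the paper makes explicit one computational device you leave implicit, namely an intermediate integration by parts in the tangential direction that moves a $\partial_\tau\widetilde v_n$ off the test function using $\widetilde v_n=0$ on $\Gamma$, converting that curvature cross-term into a zero-order contribution that lands in $F_{j,n}\subset\widetilde{\mathcal F}_j$; this is exactly what lets all curvature cross-terms except the single displayed $\nabla[v\cdot n]\cdot n$ term be absorbed into the remainder, which is the matching you correctly identify as the crux.
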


\begin{proof}
    First, we express \eqref{eq:ELcurlweak} in terms of tangent and
    normal components.
    Using the identities discussed before the statement of this lemma
    in \eqref{eq:ELcurlweak} in addition to expressing $u_{\e}$ and $v$
    in tangent-normal coordinates, integrating by parts, and using that
    $\widetilde{v}_{n}=0$ gives
    \begin{align*}
        &\int_{\mathcal{U}_{j}}\!{}
        \biggl[
        \frac{\partial_{y_{1}}(\widetilde{u}_{\e})_{\tau}\bigl(\widetilde{\psi}_{j}^{-1}(x)\bigr)
        \partial_{y_{1}}\widetilde{v}_{\tau}\bigl(\widetilde{\psi}_{j}^{-1}(x)\bigr)}
        {\bigl(1-(\widetilde{\psi}_{j}^{-1})^{2}(x)\kappa\bigl((\widetilde{\psi}_{j}^{-1})^{1}(x)\bigr)\bigr)^{2}}
        +
        \partial_{y_{2}}(\widetilde{u}_{\e})_{\tau}\bigl(\widetilde{\psi}_{j}^{-1}(x)\bigr)
        \partial_{y_{2}}\widetilde{v}_{\tau}\bigl(\widetilde{\psi}_{j}^{-1}(x)\bigr)
        \biggr]
        \\
        &+k\int_{\mathcal{U}_{j}}
        \biggl[
        \partial_{y_{2}}(\widetilde{u}_{\e})_{\tau}\bigl(\widetilde{\psi}_{j}^{-1}(x)\bigr)
        +\frac{\partial_{y_{1}}(\widetilde{u}_{\e})_{n}\bigl(\widetilde{\psi}_{j}^{-1}(x)\bigr)}{1-(\widetilde{\psi}_{j}^{-1})^{2}(x)\kappa\bigl((\widetilde{\psi}_{j}^{-1})^{1}(x)\bigr)}
        \Bigr]
        \Bigl[\partial_{y_{2}}\widetilde{v}_{\tau}\bigl(\widetilde{\psi}_{j}^{-1}(x)\bigr)\biggr]
        \\
        &+k\int_{\mathcal{U}_{j}}\!{}
        \biggl[\frac{\kappa\bigl((\widetilde{\psi}_{j}^{-1})^{1}(x)\bigr)(\widetilde{u}_{\e})_{\tau}\bigl(\widetilde{\psi}_{j}^{-1}(x)\bigr)}{1-(\widetilde{\psi}_{j}^{-1})^{2}(x)\kappa\bigl((\widetilde{\psi}_{j}^{-1})^{1}(x)\bigr)}\biggr]
        \Bigl[\partial_{y_{2}}\widetilde{v}_{\tau}\bigl(\widetilde{\psi}_{j}^{-1}(x)\bigr)\Bigr]
        \\
        =&\int_{\mathcal{U}_{j}}\!{}
        \frac{(\widetilde{u}_{\e})_{\tau}\bigl(\widetilde{\psi}_{j}^{-1}(x)\bigr)
        \widetilde{v}_{\tau}\bigl(\widetilde{\psi}_{j}^{-1}(x)\bigr)}
        {\varepsilon^{2}}(1-|u|^{2})
        +\int_{\mathcal{U}_{j}}\!{}F_{j,\tau}\bigl(x,u_{\e}(x),\nabla{}u_{\e}(x)\bigr)
        \widetilde{v}_{\tau}\bigl(\widetilde{\psi}_{j}^{-1}(x)\bigr)
    \end{align*}
    and
    \begin{align*}
        &\int_{\mathcal{U}_{j}}\!{}
       \biggl[
        \frac{\partial_{y_{1}}(\widetilde{u}_{\e})_{n}\bigl(\psi_{j}^{-1}(x)\bigr)
        \partial_{y_{1}}\widetilde{v}_{n}\bigl(\psi_{j}^{-1}(x)\bigr)}
        {\bigl(1-(\psi_{j}^{-1})^{2}(x)\kappa\bigl((\psi_{j}^{-1})^{1}(x)\bigr)\bigr)^{2}}
        +
        \partial_{y_{2}}(\widetilde{u}_{\e})_{n}\bigl(\psi_{j}^{-1}(x)\bigr)
        \partial_{y_{2}}\widetilde{v}_{n}\bigl(\psi_{j}^{-1}(x)\bigr)
        \biggr]\\
        &+k\int_{\mathcal{U}_{j}}
        \biggl[
        \partial_{y_{2}}(\widetilde{u}_{\e})_{\tau}\bigl(\widetilde{\psi}_{j}^{-1}(x)\bigr)
        +\frac{\partial_{y_{1}}(\widetilde{u}_{\e})_{n}\bigl(\widetilde{\psi}_{j}^{-1}(x)\bigr)}{1-(\widetilde{\psi}_{j}^{-1})^{2}(x)\kappa\bigl((\widetilde{\psi}_{j}^{-1})^{1}(x)\bigr)}
        \biggr]
        \biggl[\frac{\partial_{y_{1}}\widetilde{v}_{n}\bigl(\widetilde{\psi}_{j}^{-1}(x)\bigr)}{1-(\widetilde{\psi}_{j}^{-1})^{2}(x)\kappa\bigl((\widetilde{\psi}_{j}^{-1})^{1}(x)\bigr)}
        \biggr]\\
        =&\int_{\mathcal{U}_{j}}\!{}
        \frac{(\widetilde{u}_{\e})_{n}\bigl(\psi_{j}^{-1}(x)\bigr)
        \widetilde{v}_{n}\bigl(\psi_{j}^{-1}(x)\bigr)}
        {\varepsilon^{2}}(1-|u|^{2})
        +\int_{\mathcal{U}_{j}}\!{}F_{j,n}\bigl(x,u_{\e}(x),\nabla{}u_{\e}(x)\bigr)
        \widetilde{v}_{n}\bigl(\psi_{j}^{-1}(x)\bigr)
        \, ,
    \end{align*}
    where $F_{j,\tau}$ and $F_{j,n}$ are determined by the remaining
    integrands that do not involve any derivatives of the test
    function, see \eqref{app:def-Fjt} and \eqref{app:def-Fjn} for their definition.
    Notice that $F_{j,\tau}$ and $F_{j,n}$ satisfy
    \begin{equation}\label{eq:carathmap}
        \max\{|F_{j,\tau}(x,z,p)|,|F_{j,n}(x,z,p)|\}
        \le{}C(\Omega,k)\bigl[1+|z|+|p|\bigr]
    \end{equation}
    and
    \begin{equation}\label{eq:carathmapgrad}
        \max\{|\nabla_{z,p}F_{j,\tau}(x,z,p)|,|\nabla_{z,p}F_{j,n}(x,z,p)|\}
        \le{}C(\Omega,k)
    \end{equation}
    for each $j=1,2,\ldots,N$ since all constituent terms also satisfy this.

    Combining our previous work now gives
    \begin{align*}
        &\sum_{i=1}^{2}\int_{\widetilde{\mathcal{U}}_{j}}\!{}
        \bigl<\nabla{}U_{\e}^{i},\nabla{}v^{i}\bigr>_{j}
        +k\int_{\widetilde{\mathcal{U}}_{j}}\!{}
        (\Curj U_{\e})(\Curj v)
        \nonumber\\
        &+k\int_{\widetilde{\mathcal{U}}_{j}}\!{}
        |\det(\nabla\sigma_{j}(x))|\mathcal{D}_{j}(x)
        \biggl[\frac{\kappa\bigl((\widetilde{\psi}_{j}^{-1})^{1}(x)\bigr)U_{\e}(x)\cdot\tau\bigl((\widetilde{\psi}_{j}^{-1})^{1}(x)\bigr)}
        {1-(\widetilde{\psi}_{j}^{-1})^{2}(x)\kappa\bigl((\widetilde{\psi}_{j}^{-1})^{1}(x)\bigr)}\biggr]
        \nabla\bigl[v(x)\cdot{}n\bigl(\widetilde{\psi}_{j}^{-1}(x)\bigr)\bigr]\cdot{}n\bigl(\widetilde{\psi}_{j}^{-1}(x)\bigr)
        \nonumber\\
        =&\int_{\widetilde{\mathcal{U}}_{j}\cap\Omega}\!{}
        \nabla(\widetilde{u}_{\e})_{\tau}\bigl(\widetilde{\psi}_{j}^{-1}(x)\bigr)^{T}
        \bigl[\mathcal{G}_{j}(x,\widetilde{\psi}_{j}^{-1}(x))\bigr]
        \bigl[\nabla\widetilde{v}_{\tau}\bigl(\widetilde{\psi}_{j}^{-1}(x)\bigr)+
        \nabla\widetilde{v}_{\tau}^{R}\bigl(\widetilde{\psi}_{j}^{-1}(x)\bigr)\bigr]\\
        &+
        \int_{\widetilde{\mathcal{U}}_{j}\cap\Omega}\!{}
        \nabla(\widetilde{u}_{\e})_{n}\bigl(\widetilde{\psi}_{j}^{-1}(x)\bigr)^{T}
        \bigl[\mathcal{G}_{j}(x,\widetilde{\psi}_{j}^{-1}(x))\bigr]
        \bigl[\nabla\widetilde{v}_{n}\bigl(\widetilde{\psi}_{j}^{-1}(x)\bigr)-
        \nabla\widetilde{v}_{n}^{R}\bigl(\widetilde{\psi}_{j}^{-1}(x)\bigr)\bigr]\\
        &+k\int_{\widetilde{\mathcal{U}}_{j}\cap\Omega}\!{}
        \biggl[
        \partial_{y_{2}}(\widetilde{u}_{\e})_{\tau}\bigl(\widetilde{\psi}_{j}^{-1}(x)\bigr)+
        \frac{\partial_{y_{1}}(\widetilde{u}_{\e})_{n}\bigl(\widetilde{\psi}_{j}^{-1}(x)\bigr)}
        {1-(\widetilde{\psi}_{j}^{-1})^{2}(x)\kappa\bigl((\widetilde{\psi}_{j}^{-1})^{1}(x)\bigr)}
        \biggr]
        \bigl[\partial_{y_{2}}\widetilde{v}_{\tau}\bigl(\widetilde{\psi}_{j}^{-1}(x)\bigr)-\partial_{y_{2}}(\widetilde{v}^{R})_{\tau}\bigl(\widetilde{\psi}_{j}^{-1}(x)\bigr)
        \bigr]
        \\
        &+k\int_{\widetilde{\mathcal{U}}_{j}\cap\Omega}\!{}
        \biggl[
        \partial_{y_{2}}(\widetilde{u}_{\e})_{\tau}\bigl(\widetilde{\psi}_{j}^{-1}(x)\bigr)+
        \frac{\partial_{y_{1}}(\widetilde{u}_{\e})_{n}\bigl(\widetilde{\psi}_{j}^{-1}(x)\bigr)}
        {1-(\widetilde{\psi}_{j}^{-1})^{2}(x)\kappa\bigl((\widetilde{\psi}_{j}^{-1})^{1}(x)\bigr)}
        \biggr]
        \frac{\partial_{y_{1}}\widetilde{v}_{\tau}\bigl(\widetilde{\psi}_{j}^{-1}(x)\bigr)+\partial_{y_{1}}(\widetilde{v}^{R})_{\tau}\bigl(\widetilde{\psi}_{j}^{-1}(x)\bigr)}
        {1-(\widetilde{\psi}_{j}^{-1})^{2}(x)\kappa\bigl((\widetilde{\psi}_{j}^{-1})^{1}(x)\bigr)}
        \\
        &+k\int_{\widetilde{\mathcal{U}}_{j}\cap\Omega}\!{}
        \biggl[\frac{\kappa\bigl((\widetilde{\psi}_{j}^{-1})^{1}(x)\bigr)(\widetilde{u}_{\e})_{\tau}\bigl(\widetilde{\psi}_{j}^{-1}(x)\bigr)}{1-(\widetilde{\psi}_{j}^{-1})^{2}(x)\kappa\bigl((\widetilde{\psi}_{j}^{-1})^{1}(x)\bigr)}\biggr]
        \Bigl[\partial_{y_{2}}\widetilde{v}_{\tau}\bigl(\widetilde{\psi}_{j}^{-1}(x)\bigr)
        -\partial_{y_{2}}(\widetilde{v}^{R})_{\tau}\bigl(\widetilde{\psi}_{j}^{-1}(x)\bigr)\Bigr]
        \\
        &+\int_{\widetilde{\mathcal{U}}_{j}\setminus\Omega}\!{}
        \widetilde{F}_{j}\bigl(x,U_{\e}(x),\nabla{}U_{\e}(x)\bigr)\cdot
        v(x).
    \end{align*}
    Since the operation
    \begin{equation*}
        w\mapsto{}\mathfrak{R}_{j}(x,w(\sigma_{j}(x))),
    \end{equation*}
    for suitable functions, is an involution for each
    $x\in\widetilde{\mathcal{U}}_{j}$ then
    we may define the even part relative to this involution, denoted
    $w_{E}$, by
    \begin{equation*}
        w_{E}(x)\coloneqq
        \frac{w(x)+\mathfrak{R}_{j}(x,w(\sigma_{j}(x)))}{2}.
    \end{equation*}
    Observe that
    \begin{equation*}
        (w_{E})_{\tau}(x)=
        \frac{\widetilde{w}_{\tau}(\widetilde{\psi}_{j}^{-1}(x))
        +\widetilde{w}_{\tau}(\widetilde{\psi}_{j}^{-1}(R(x)))}{2},
        \hspace{15pt}
        (w_{E})_{n}(x)=
        \frac{\widetilde{w}_{n}(\widetilde{\psi}_{j}^{-1}(x))
        -\widetilde{w}_{n}(\widetilde{\psi}_{j}^{-1}(R(x)))}{2}
    \end{equation*}
    and that for $x\in\Gamma$ satisfying
    $x=\widetilde{\psi}_{j}(y_{1},0)$
    \begin{equation*}
        (w_{E})_{n}(x)
        =\frac{\widetilde{w}_{n}(\widetilde{\psi}_{j}^{-1}(x))
        -\widetilde{w}_{n}(\widetilde{\psi}_{j}^{-1}(x))}{2}
        =\frac{\widetilde{w}_{n}(y_{1},0)
        -\widetilde{w}_{n}(y_{1},0)}{2}
        =0.
    \end{equation*}
    Thus, $w_{E}$ only has tangential part along $\Gamma$.
    Using this notation in the previous calculation combined with
    the PDE satisfied by $u_{\e}$ we find that
    \begin{align*}
        &
        \sum_{i=1}^{2}\int_{\widetilde{\mathcal{U}}_{j}}\!{}
        \bigl<\nabla{}U_{\e}^{i}(x),\nabla{}v^{i}(x)\bigr>_{j}
        +k\int_{\widetilde{\mathcal{U}}_{j}}\!{}
        \Curj(U_{\e})(x)\Curj(v)(x)\\
        &+k\int_{\widetilde{\mathcal{U}}_{j}}\!{}
        |\det(\nabla\sigma_{j}(x))|\mathcal{D}_{j}(x)
        \biggl[\frac{\kappa\bigl((\widetilde{\psi}_{j}^{-1})^{1}(x)\bigr)U_{\e}(x)\cdot\tau\bigl((\widetilde{\psi}_{j}^{-1})^{1}(x)\bigr)}
        {1-(\widetilde{\psi}_{j}^{-1})^{2}(x)\kappa\bigl((\widetilde{\psi}_{j}^{-1})^{1}(x)\bigr)}\biggr]
        \nabla\bigl[v(x)\cdot{}n\bigl(\widetilde{\psi}_{j}^{-1}(x)\bigr)\bigr]\cdot{}n\bigl(\widetilde{\psi}_{j}^{-1}(x)\bigr)
        \\
        =&2\int_{\widetilde{\mathcal{U}}_{j}\cap\Omega}\!{}
            \frac{U_{\e}\cdot{}v_{E}}{\varepsilon^{2}}(1-|U_{\e}|^{2})
            +\int_{\widetilde{\mathcal{U}}_{j}}\!{}
            \widetilde{\mathcal{F}}_{j}\bigl(x,U_{\e}(x),\nabla{}U_{\e}(x)\bigr)\cdot
            v(x),
    \end{align*}
    where $\widetilde{\mathcal{F}}_{j}$ combines $F_{j,\tau}$,
    $F_{j,n}$, and $\widetilde{F}_{j}$.
    Noting that
    \begin{equation*}
        U(x)\cdot\bigl[\mathfrak{R}_{j}\bigl(x,v(\sigma_{j}(x))\bigr)\bigr]=\mathfrak{R}_{j}(x,U(x))\cdot{}v(\sigma_{j}(x))
        \, ,
    \end{equation*}
    and changing variables gives
    \begin{align*}
        &\sum_{i=1}^{2}\int_{\widetilde{\mathcal{U}}_{j}}\!{}
        \bigl<\nabla{}U_{\e}^{i},\nabla{}v^{i}\bigr>_{j}
        +k\int_{\widetilde{\mathcal{U}}_{j}}\!{}
        (\Curj U_{\e})(\Curj v)
        \nonumber\\
        &+k\int_{\widetilde{\mathcal{U}}_{j}}\!{}
        |\det(\nabla\sigma_{j}(x))|\mathcal{D}_{j}(x)
        \biggl[\frac{\kappa\bigl((\widetilde{\psi}_{j}^{-1})^{1}(x)\bigr)U_{\e}(x)\cdot\tau\bigl((\widetilde{\psi}_{j}^{-1})^{1}(x)\bigr)}
        {1-(\widetilde{\psi}_{j}^{-1})^{2}(x)\kappa\bigl((\widetilde{\psi}_{j}^{-1})^{1}(x)\bigr)}\biggr]
        \nabla\bigl[v(x)\cdot{}n\bigl(\widetilde{\psi}_{j}^{-1}(x)\bigr)\bigr]\cdot{}n\bigl(\widetilde{\psi}_{j}^{-1}(x)\bigr)
        \nonumber\\
        =&\int_{\widetilde{\mathcal{U}}_{j}\cap\Omega}\!{}
        \frac{U_{\e}(x)\cdot{}v(x)}{\varepsilon^{2}}(1-|U_{\e}(x)|^{2})
        \nonumber\\
        &+\int_{\widetilde{\mathcal{U}}_{j}\setminus\Omega}\!{}
        |\det(\nabla\sigma_{j}(x))|
        \frac{\mathfrak{R}_{j}\bigl(x,U_{\e}(x)\bigr)\cdot{}v(x)}{\varepsilon^{2}}(1-|U_{\e}(x)|^{2})
        \nonumber\\
        &+\int_{\widetilde{\mathcal{U}}_{j}}\!{}\widetilde{\mathcal{F}}_{j}
        \bigl(x,U_{\e}(x),\nabla{}U_{\e}(x)\bigr)\cdot{}v(x).
    \end{align*}
\end{proof}


\subsection{Ellipticity}\label{subsec:ellip}

From the calculations of Subsection \ref{subsec:glueing} we see that the operator has changed for
$x\in\widetilde{\Omega}\setminus\Omega$.
In particular, it is worth noting that the operator now includes a first order term which did not appear when considering the divergence penalized case \cite{bcs24}.
In this subsection, we show that the new operator is still elliptic in the sense of Legendre-Hadamard.
The importance of this subsection is that, while the operator has
changed, the elliptic structure of the PDE remains intact.
As a result, one may apply elliptic regularity in order to obtain
control of stronger norms.\\

\begin{lem}\label{lem:ExtEllipticity}
    Suppose $\Omega\subset\mathbb{R}^{2}$ is an open, bounded, and
    simply connected set with $C^{3,1}$ boundary.
    Suppose also that $u_{\e}$ is a minimizer to \eqref{def:CurlEnergy}
    and that $U_{\e}$ is its corresponding extension to $\widetilde{\Omega}$
    defined as in \eqref{def:extension}.
    Then $U_{\e}$ weakly solves an elliptic PDE.
\end{lem}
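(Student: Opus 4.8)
The plan is to isolate the principal (second--order) part of the bilinear form on the left--hand side of \eqref{def:ExtendedPDE} and to check that it satisfies the Legendre--Hadamard condition; since this condition is determined entirely by the top--order coefficients, the new first--order term that distinguishes the curl case from the divergence case of \cite{bcs24} will play no role. First I would record that $U_{\e}\in{}H^{1}(\widetilde{\Om};\R^{2})$ by construction of the extension \eqref{def:extension} (on $\G$ one has $A(x)u_{\e}=u_{\e}$ since $u_{\e}\cdot{}n=0$ there, so the two branches match in trace), so the weak formulation of Lemma~\ref{lem:pdeext} is meaningful. Then I would split the left--hand side of \eqref{def:ExtendedPDE} as $B_{j}^{0}(U_{\e},v)+B_{j}^{1}(U_{\e},v)$, where the principal part is
\[
B_{j}^{0}(U_{\e},v)\coloneqq\sum_{i=1}^{2}\int_{\widetilde{\mathcal{U}}_{j}}\bigl<\grad U_{\e}^{i},\grad v^{i}\bigr>_{j}+k\int_{\widetilde{\mathcal{U}}_{j}}(\Curj U_{\e})(\Curj v),
\]
and $B_{j}^{1}$ is the remaining integral involving $\grad\bigl[v\cdot{}n\bigr]\cdot{}n$, which is first order in $v$ and carries no derivative of $U_{\e}$. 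Together with the reaction integrals and the Carath\'eodory map $\widetilde{\mathcal{F}}_{j}$ on the right--hand side of \eqref{def:ExtendedPDE}, which by \eqref{eq:carathmapApp} grow at most affinely in $(U_{\e},\grad U_{\e})$ and hence (using $H^{1}\hookrightarrow{}L^{p}$ for every $p<\infty$ in two dimensions for the cubic reaction term) define an $L^{2}(\widetilde{\mathcal{U}}_{j})$ datum, these pieces contribute only lower--order coefficients and an $L^{2}$ right--hand side to the operator solved weakly by $U_{\e}$. So the lemma reduces to the Legendre--Hadamard ellipticity of the coefficients of $B_{j}^{0}$.

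For the gradient term the coefficient matrix is $\mathcal{A}_{j}(x)\coloneqq{}|\det(\grad\sigma_{j}(x))|\,\grad\sigma_{j}(\sigma_{j}(x))\grad\sigma_{j}(\sigma_{j}(x))^{T}$, which by the identities of Subsection~\ref{subsec:glueing} equals $I_{2}$ on $\widetilde{\mathcal{U}}_{j}\cap\Om$ and equals $|\det(\grad\sigma_{j}(x))|\,\grad\widetilde{\psi}_{j}\bigl(\widetilde{\psi}_{j}^{-1}(x)\bigr)\,\mathcal{M}(x)\,\grad\widetilde{\psi}_{j}\bigl(\widetilde{\psi}_{j}^{-1}(x)\bigr)^{T}$ on $\widetilde{\mathcal{U}}_{j}\setminus\Om$. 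I would fix the tubular width $r_{1}$ (as in Subsection~\ref{subsec:coord}, depending only on $\sup_{\G}|\kdiff|$) so small that $r_{1}\sup_{\G}|\kdiff|\le\tfrac12$, whence $|(\widetilde{\psi}_{j}^{-1})^{2}(x)|\,|\kdiff|\le\tfrac12$ on every $\widetilde{\mathcal{U}}_{j}$; then $\mathcal{M}(x)$ is symmetric with eigenvalues in $[\tfrac{4}{9},4]$, the matrix $\grad\widetilde{\psi}_{j}$, whose columns are $(1-y_{2}\kdiff)\tau$ and $-n$, has $|\det\grad\widetilde{\psi}_{j}|=|1-y_{2}\kdiff|\in[\tfrac12,\tfrac32]$ so that $\grad\widetilde{\psi}_{j}$ and its inverse are uniformly bounded, and $|\det(\grad\sigma_{j})|$ is continuous and bounded above and below by positive constants, as is clear from the exterior formula $\sigma_{j}=R$ and the form of $\grad\widetilde{\psi}_{j}$. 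Hence there is $\lambda_{0}=\lambda_{0}(\Om)>0$ with $\zeta^{T}\mathcal{A}_{j}(x)\zeta\ge\lambda_{0}|\zeta|^{2}$ for all $\zeta\in\R^{2}$ and all $x\in\widetilde{\mathcal{U}}_{j}$, so the contribution of the gradient term to the principal symbol at a pair $(\xi,\eta)\in(\R^{2}\setminus\{0\})\times(\R^{2}\setminus\{0\})$ equals $(\xi^{T}\mathcal{A}_{j}(x)\xi)\,|\eta|^{2}\ge\lambda_{0}|\xi|^{2}|\eta|^{2}$.

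For the curl term I would use that $\Curj$ is a first--order differential operator with smooth, geometry--only coefficients, so its principal symbol at $(\xi,\eta)$ is a scalar $s_{j}(x;\xi,\eta)$ linear in $\xi$ and in $\eta$; consequently the principal part of $k\int(\Curj U_{\e})(\Curj v)$ has coefficient tensor $k\,b^{\alpha}_{i}b^{\beta}_{j}$, contributing exactly $k\,s_{j}(x;\xi,\eta)^{2}\ge0$ to the principal symbol of $B_{j}^{0}$ --- a perfect square, hence a non--negative (generically degenerate) addition. Adding the two contributions, the principal symbol of $B_{j}^{0}$ is bounded below by $\lambda_{0}|\xi|^{2}|\eta|^{2}+k\,s_{j}(x;\xi,\eta)^{2}\ge\lambda_{0}|\xi|^{2}|\eta|^{2}>0$, which is the Legendre--Hadamard condition with a constant independent of $\e$ and of the chart index $j$. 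Patching the charts by the partition of unity of Subsection~\ref{subsec:glueing} (and recalling that on $\widetilde{\mathcal{U}}_{0}\subset\Om$ one simply has $U_{\e}=u_{\e}$, with principal symbol $|\xi|^{2}|\eta|^{2}+k(\xi^{\perp}\!\cdot\eta)^{2}\ge|\xi|^{2}|\eta|^{2}$) then produces a single Legendre--Hadamard elliptic operator on $\widetilde{\Om}$ whose weak form is solved by $U_{\e}$, which is the claim.

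The main obstacle, already flagged in the discussion preceding this subsection, is precisely the appearance of a first--order term for $x\in\widetilde{\Om}\setminus\Om$ with no analogue in \cite{bcs24}; the content of the argument above is that, carrying only one derivative, it cannot enter the principal symbol and therefore cannot destroy the coercivity furnished by the gradient term. The only substantive verifications are that the reflected metric $\mathcal{A}_{j}$ stays uniformly positive definite after the exterior change of variables --- which is where the smallness of $r_{1}$ enters --- and that the $\Curj$ contribution is a perfect square with $k>0$, hence non--negative.
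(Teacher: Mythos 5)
Your proof is correct and follows essentially the same strategy as the paper's: isolate the principal part, observe that the $\Curj$--$\Curj$ contribution to the Legendre--Hadamard quadratic is a non-negative perfect square, and bound below by the uniformly coercive gradient contribution, with the exterior-only first-order term not entering the symbol. The paper carries this out more explicitly by passing to the chart coordinates $y=\widetilde{\psi}_{j}^{-1}(x)$ and reading off the coefficients $A^{\alpha\beta}_{ij}$ before applying the arithmetic--geometric inequality, whereas you argue at the level of the coefficient matrix $\mathcal{A}_{j}$ and the symbol $s_{j}$, but the two computations are the same.
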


\begin{proof}
    Since the modifications to the operator all occur for $x\in\widetilde{\Omega}\setminus\Omega$ and the operator was elliptic to start, it suffices to verify ellipticity only on $\widetilde{\Omega}\setminus\Omega$.
    In addition, since ellipticity is a pointwise condition and invariant under a change of variables, it is enough to verify ellipticity in
    local coordinates within the coordinate charts $\widetilde{\mathcal{U}}_{j}\setminus\Omega$ for each $j=1,2,\ldots,N$.
    As a result, we will use the weak formulation of the PDE in coordinates
    found for the extension in Subsection \ref{subsec:glueing}.
    First, notice that we can rewrite
    \begin{equation*}
    	\sum_{i=1}^{2}\int_{\widetilde{\mathcal{U}}_{j}\setminus\Omega}\!{}
        \bigl<\nabla{}U_{\varepsilon}^{i}(x),\nabla{}v^{i}(x)\bigr>_{j}
    \end{equation*}
    in the coordinates introduced in Subsection~\ref{subsec:coord} as
    \begin{align*}
        &\int_{\widetilde{\mathcal{U}}_{j}\setminus\Omega}\!{}
        |\det(\nabla\sigma_{j}(x))|
        \nabla(\widetilde{u}_{\e})_{\tau}\bigl(\widetilde{\psi}_{j}^{-1}(R(x))\bigr)^{T}
        \bigl[[I_{2}-2\mathbf{e}_{2}\mathbf{e}_{2}^{T}]\mathcal{G}_{j}(x,\widetilde{\psi}_{j}^{-1}(R(x)))\bigr]
        \nabla\widetilde{v}_{\tau}\bigl(\widetilde{\psi}_{j}^{-1}(x)\bigr)\\
        &-\int_{\widetilde{\mathcal{U}}_{j}\setminus\Omega}\!{}
        |\det(\nabla\sigma_{j}(x))|
        \nabla\widetilde({u}_{\e})_{n}\bigl(\widetilde{\psi}_{j}^{-1}(R(x))\bigr)^{T}
        \bigl[[I_{2}-2\mathbf{e}_{2}\mathbf{e}_{2}^{T}]\mathcal{G}_{j}(x,\widetilde{\psi}_{j}^{-1}(R(x)))\bigr]
        \nabla\widetilde{v}_{n}\bigl(\widetilde{\psi}_{j}^{-1}(x)\bigr)\\
        &+\int_{\widetilde{\mathcal{U}}_{j}\setminus\Omega}\!{}
        |\det(\nabla\sigma_{j}(x))|
        \kappa\bigl((\widetilde{\psi}_{j}^{-1})^{1}(x)\bigr)
        \widetilde{v}_{n}\bigl(\widetilde{\psi}_{j}^{-1}(x)\bigr)
        \nabla(\widetilde{u}_{\e})_{\tau}\bigl(\widetilde{\psi}_{j}^{-1}(R(x))\bigr)^{T}
        \bigl[[I_{2}-2\mathbf{e}_{2}\mathbf{e}_{2}^{T}]\mathcal{G}_{j}(x,\widetilde{\psi}_{j}^{-1}(R(x)))\bigr]
        \mathbf{e}_{1}\\
        &-\int_{\widetilde{\mathcal{U}}_{j}\setminus\Omega}\!{}
        |\det(\nabla\sigma_{j}(x))|
        \kappa\bigl((\widetilde{\psi}_{j}^{-1})^{1}(x)\bigr)
        \widetilde{v}_{\tau}\bigl(\widetilde{\psi}_{j}^{-1}(x)\bigr)
        \nabla(\widetilde{u}_{\e})_{n}\bigl(\widetilde{\psi}_{j}^{-1}(R(x))\bigr)^{T}
        \bigl[[I_{2}-2\mathbf{e}_{2}\mathbf{e}_{2}^{T}]\mathcal{G}_{j}(x,\widetilde{\psi}_{j}^{-1}(R(x)))\bigr]
        \mathbf{e}_{1}\\
        &-\sum_{i=1}^{2}\int_{\widetilde{\mathcal{U}}_{j}\setminus\Omega}\!{}
        |\det(\nabla\sigma_{j}(x))|
        \frac{\kappa\bigl((\widetilde{\psi}_{j}^{-1})^{1}(x)\bigr)
        n^{i}\bigl((\widetilde{\psi}_{j}^{-1})^{1}(x)\bigr)}
        {\bigl(1+(\widetilde{\psi}_{j}^{-1})^{2}(x)\kappa\bigl((\widetilde{\psi}_{j}^{-1})^{1}(x)\bigr)\bigr)^{2}}
        (\widetilde{u}_{\e})_{\tau}\bigl(\widetilde{\psi}_{j}^{-1}(R(x))\bigr)
        \Bigl[
        \nabla\widetilde{\psi}_{j}\bigl(\widetilde{\psi}_{j}^{-1}(x)\bigr)\mathbf{e}_{1}\Bigr]\cdot{}\nabla{}v(x)\\
        &+\sum_{i=1}^{2}\int_{\widetilde{\mathcal{U}}_{j}\setminus\Omega}\!{}
        |\det(\nabla\sigma_{j}(x))|
        \frac{\kappa\bigl((\widetilde{\psi}_{j}^{-1})^{1}(x)\bigr)
        \tau^{i}\bigl((\widetilde{\psi}_{j}^{-1})^{1}(x)\bigr)}
        {\bigl(1+(\widetilde{\psi}_{j}^{-1})^{2}(x)\kappa\bigl((\widetilde{\psi}_{j}^{-1})^{1}(x)\bigr)\bigr)^{2}}
        (\widetilde{u}_{\e})_{n}\bigl(\widetilde{\psi}_{j}^{-1}(R(x))\bigr)
        \Bigl[
        \nabla\widetilde{\psi}_{j}\bigl(\widetilde{\psi}_{j}^{-1}(x)\bigr)\mathbf{e}_{1}\Bigr]\cdot{}\nabla{}v(x).
    \end{align*}
    Second, we can rewrite the integral
    \begin{equation*}
    	k\int_{\widetilde{\mathcal{U}}_{j}\setminus\Omega}\!{}
    	(\Curj U_{\varepsilon})(\Curj v)
    \end{equation*}
    as
    \begin{align*}
        &k\int_{\widetilde{\mathcal{U}}_{j}\setminus\Omega}\!{}
        |\det(\nabla\sigma_{j}(x))|
        \partial_{y_{2}}(\widetilde{U}_{\e})_{\tau}\bigl(\widetilde{\psi}_{j}^{-1}(R(x))\bigr)
        \partial_{y_{2}}\widetilde{v}_{\tau}\bigl(\widetilde{\psi}_{j}^{-1}(x)\bigr)\\
        +&k\int_{\widetilde{\mathcal{U}}_{j}\setminus\Omega}\!{}
        |\det(\nabla\sigma_{j}(x))|
        \frac{
        \partial_{y_{1}}(\widetilde{U}_{\e})_{n}\bigl(\widetilde{\psi}_{j}^{-1}(R(x))\bigr)
        \partial_{y_{2}}\widetilde{v}_{\tau}\bigl(\psi_{j}^{-1}(x)\bigr)}
        {1+(\widetilde{\psi}_{j}^{-1})^{2}(x)\kappa\bigl((\widetilde{\psi_{j}^{-1}})^{1}(x)\bigr)}\\
        +&k\int_{\widetilde{\mathcal{U}}_{j}\setminus\Omega}\!{}
        |\det(\nabla\sigma_{j}(x))|
        \frac{\partial_{y_{2}}(\widetilde{U}_{\e})_{\tau}\bigl(\widetilde{\psi}_{j}^{-1}(R(x))\bigr)\partial_{y_{1}}\widetilde{v}_{n}\bigl(\psi_{j}^{-1}(x)\bigr)}
        {1+(\widetilde{\psi}_{j}^{-1})^{2}(x)\kappa\bigl((\widetilde{\psi_{j}^{-1}})^{1}(x)\bigr)}\\
        +&k\int_{\widetilde{\mathcal{U}}_{j}\setminus\Omega}\!{}
        |\det(\nabla\sigma_{j}(x))|
        \frac{\partial_{y_{1}}(\widetilde{U}_{\e})_{n}\bigl(\widetilde{\psi}_{j}^{-1}(R(x))\bigr)\partial_{y_{1}}\widetilde{v}_{n}\bigl(\psi_{j}^{-1}(x)\bigr)}
        {\bigl(1+(\widetilde{\psi}_{j}^{-1})^{2}(x)\kappa\bigl((\widetilde{\psi_{j}^{-1}})^{1}(x)\bigr)\bigr)^{2}}.
    \end{align*}
    Finally, we can rewrite the integral
    \begin{equation*}
    	k\int_{\widetilde{\mathcal{U}}_{j}}\!{}
        |\det(\nabla\sigma_{j}(x))|\mathcal{D}_{j}(x)
        \biggl[\frac{\kappa\bigl((\widetilde{\psi}_{j}^{-1})^{1}(x)\bigr)U_{\e}(x)\cdot\tau\bigl((\widetilde{\psi}_{j}^{-1})^{1}(x)\bigr)}
        {1-(\widetilde{\psi}_{j}^{-1})^{2}(x)\kappa\bigl((\widetilde{\psi}_{j}^{-1})^{1}(x)\bigr)}\biggr]
        \nabla\bigl[v(x)\cdot{}n\bigl(\widetilde{\psi}_{j}^{-1}(x)\bigr)\bigr]\cdot{}n\bigl(\widetilde{\psi}_{j}^{-1}(x)\bigr),
    \end{equation*}
    as
    \begin{align*}
        -k\int_{\widetilde{\mathcal{U}}_{j}}\!{}
        |\det(\nabla\sigma_{j}(x))|
        \frac{\kappa\bigl((\widetilde{\psi}_{j}^{-1})^{1}(x)\bigr)
        (U_{\e})_{\tau}\bigl(\psi_{j}^{-1}(x)\bigr)
        \partial_{y_{2}}\widetilde{v}_{n}\bigl(\widetilde{\psi}_{j}^{-1}(x)\bigr)}
        {1+(\widetilde{\psi}_{j}^{-1})^{2}(x)\kappa\bigl((\widetilde{\psi_{j}^{-1}})^{1}(x)\bigr)}.
    \end{align*}
    Summing these representations and changing coordinates using the change of variables $x=\widetilde{\psi}_{j}(y)$ leads to
    \begin{align*}
    	&\int_{B_{r_{1},-}(0)}\!{}
    	\biggl[\frac{\partial_{y_{1}}(\widetilde{U}_{\varepsilon})_{\tau}(y)
    	\partial_{y_{1}}\widetilde{v}_{\tau}(y)}
    	{(1+y_{2}\kappa(y_{1}))^{2}}
    	+(k+1)\partial_{y_{2}}(\widetilde{U}_{\varepsilon})_{\tau}(y)
    	\partial_{y_{2}}\widetilde{v}_{\tau}(y)\biggr]|\det(\nabla\widetilde{\psi}_{j}(x))|\\
    	+&\int_{B_{r_{1},-}(0)}\!{}\biggl[
    	\frac{(k+1)\partial_{y_{1}}(\widetilde{U}_{\varepsilon})_{n}(y)
    	\partial_{y_{1}}\widetilde{v}_{n}(y)}
    	{(1+y_{2}\kappa(y_{1}))^{2}}
    	+\partial_{y_{2}}(\widetilde{U}_{\varepsilon})_{n}(y)
    	\partial_{y_{2}}\widetilde{v}_{n}(y)
        \biggr]|\det(\nabla\widetilde{\psi}_{j}(x))|\\
    	+&k\int_{B_{r_{1},-}(0)}\!{}\biggl[
        \frac{\partial_{y_{2}}(\widetilde{U}_{\varepsilon})_{\tau}(y)
    	\partial_{y_{1}}\widetilde{v}_{n}(y)
        +\partial_{y_{1}}(\widetilde{U}_{\varepsilon})_{n}(y)
    	\partial_{y_{2}}\widetilde{v}_{\tau}(y)}{1+y_{2}\kappa(y_{1})}
        \biggr]|\det(\nabla\widetilde{\psi}_{j}(x))|\\
    	+&
        \text{lower order terms}.
    \end{align*}
    Next we check ellipticity of the leading order terms.
    We refer to Definition 3.36 in Subsection 3.41 of \cite{GiMa} for notation regarding PDE systems.
    From the above we can read off the coefficients $A_{ij}^{\alpha\beta}$ of the elliptic operator. 
    For $\alpha,\beta,i,j\in\{1,2\}$ it holds
    \begin{alignat*}{3}
            &
            A^{1,1}_{1,1}
            =
            \frac{|\det(\nabla\widetilde{\psi}_{j}(x))|}
            {(1+y_{2}\kappa(y_{1}))^{2}},
            \quad
            &&
            A^{2,2}_{1,1}
            =
            (1+k)|\det(\nabla\widetilde{\psi}_{j}(x))|,
            \quad
            &&
            A^{1,1}_{2,2}
            =
            \frac{(1+k)|\det(\nabla\widetilde{\psi}_{j}(x))|}
            {(1+y_{2}\kappa(y_{1}))^{2}},\\
            &
            A^{2,2}_{2,2}
            =
            |\det(\nabla\widetilde{\psi}_{j}(x))|,
            \quad
            &&
            A^{2,1}_{2,1}
            =
            \frac{k|\det(\nabla\widetilde{\psi}_{j}(x))|}{1+y_{2}\kappa(y_{1})},
            \quad
            &&
            A^{1,2}_{1,2}
            =
            \frac{k|\det(\nabla\widetilde{\psi}_{j}(x))|}{1+y_{2}\kappa(y_{1})},
    \end{alignat*}
    and we have $A_{ij}^{\alpha\beta}=0$ otherwise.
    These coefficients are Lipschitz continuous and the Arithmetic-Geometric inequality gives that
    \begin{align*}
        \sum_{k,m,i,j=1}^{2}A_{i,j}^{k,m}\xi_{k}\xi_{m}\eta_{i}\eta_{j}
        =&\frac{|\det(\nabla\widetilde{\psi}_{j}(x))|\xi_{1}^{2}|\eta|^{2}}
        {(1+y_{2}\kappa(y_{1}))^{2}}
        +|\det(\nabla\widetilde{\psi}_{j}(x))|\xi_{2}^{2}|\eta|^{2}\\
        &+k|\det(\nabla\widetilde{\psi}_{j}(x))|\xi_{2}^{2}\eta_{1}^{2}
        +\frac{k|\det(\nabla\widetilde{\psi}_{j}(x))|\xi_{1}^{2}\eta_{2}^{2}}{(1+y_{2}\kappa(y_{1}))^{2}}\\
        &+2|\det(\nabla\widetilde{\psi}_{j}(x))|\bigl[\sqrt{k}\xi_{2}\eta_{1}\bigr]
        \biggl[\frac{\sqrt{k}\xi_{1}\eta_{2}}{1+y_{2}\kappa(y_{1})}\biggr]\\
        \ge&|\det(\nabla\widetilde{\psi}_{j}(x))|\biggl[\frac{\xi_{1}^{2}|\eta|^{2}}{(1+y_{2}\kappa(y_{1}))^{2}}
        +\xi_{2}^{2}|\eta|^{2}\biggr]\\
        \ge&\frac{|\det(\nabla\widetilde{\psi}_{j}(x))|}{\bigl(1+r_{1}\left\|\kappa\right\|_{L^{\infty}((-r_{1},r_{1}))}\bigr)^{2}}|\xi|^{2}|\eta|^{2}.
    \end{align*}
    By the construction of the tubular neighbourhood and the fact that
    $\widetilde{\psi}_{j}$ is invertible over $\widetilde{\mathcal{U}}_{j}$ we
    can find a constant $C_{0}>0$ dependent only on $\Omega$ such that
    \begin{equation*}
        |\det(\nabla\widetilde{\psi}_{j}(x))|\ge{}C_{0}
    \end{equation*}
    for all $j=1,2,\ldots,N$.
    Then we have that
    \begin{equation*}
        \sum_{k,m,i,j=1}^{2}A_{i,j}^{k,m}\xi_{k}\xi_{m}\eta_{i}\eta_{j}
        \ge{}\frac{C_{0}}{\bigl(1+r_{1}\left\|\kappa\right\|_{L^{\infty}((-r_{1},r_{1}))}\bigr)^{2}}|\xi|^{2}|\eta|^{2}.
    \end{equation*}
\end{proof}

\subsection{Proof of Theorem \ref{thm:a-priori-curl}}
\label{subsec:regproof}
We now have all the necessary ingredients to prove Theorem~\ref{thm:a-priori-curl} and will now dedicate a subsection to supplying the argument.
As the proof closely follows \cite{bcs24} we only point out the relevant modifications hereafter.
We also provide an extended discussion which illustrates how to obtain regularity results for a Legendre-Hadamard elliptic system of equations.

\begin{proof}[Proof of Theorem~\ref{thm:a-priori-curl}]
    Noting that we only have to show the bounds close to the boundary, pick any $x_0\in\Gamma$ and perform the rescaling $\frac{1}{\e}(\widetilde{\Omega}-x_0)$ and $\widehat{U}_{\e}(z)\coloneqq{}U_{\e}(x_0+\e z)$, where $\widetilde{\Omega}$ and $U_{\e}$ are the extended domain and extended minimizer from Subsection~\ref{subsec:coord}.
    Using Subsection~\ref{subsec:ellip}, one sees that the PDE from Subsection~\ref{subsec:glueing} is elliptic in the sense of Legendre-Hadamard, so we would like to apply results from elliptic regularity.

    For the convenience of the reader we provide an extended discussion here: 
    \begin{enumerate}
        \item
            $W^{2,2}$ interior regularity for $L^{2}$ data is established e.g.\ in Theorem $4.11$ of \cite{Amb18}.
            We note that while the proof of the Theorem utilizes Legendre ellipticity it actually extends to Legendre-Hadamard ellipticity due to Theorem $4.4$ of
            \cite{GiMa}.
        \item
            Interior regularity of second derivative of the solution to the extended PDE from Subsection~\ref{subsec:glueing} (based on \eqref{eq:ELcurlstrong}) for $L^1$ data can be obtained through Calderon-Zygmund estimates.
    \end{enumerate}
    From here, we appeal to Stampaccia interpolation, to establish $W^{2,p}$ regularity given $L^{p}$ data for $1<p<2$.
    Finally, we proceed with a duality argument to deduce regularity in $W^{2,p}$ given $L^{p}$ data for $2<p<\infty$.
    In particular, we obtain a uniform bound in $W^{2,\frac43}$ after demonstrating that the PDE data is uniformly bounded in $L^{\frac{4}{3}}$.
    For this, we require a uniform $L^4-$bound of $\widehat{U}_{\e}$ which can be obtained as follows:
    First show that for the classical Ginzburg-Landau energy, defined by
    \begin{equation*}
    G_{\e}(u)
    \coloneqq
    \frac{1}{2}\int_{\Omega}\Bigl(|\grad u|^2+\frac{1}{2\e^2}\bigl(1-|u|^2\bigr)^2\Bigr)\mathrm{d}x
    \, ,
    \end{equation*}
    the energy $G_{\sqrt{2}\e}$ of $u_\e$ is bounded from above by $\pi|\ln\e|+C$, which follows as in \cite[Section 4]{bcs24} (based on \cite{colbert2013analysis}), with the minor modification that one has to replace the divergence by the curl and the test function $\rho_\e(x)\frac{(x-x_0)^\perp}{|x-x_0|}$ by $\rho_\e(x)\frac{x-x_0}{|x-x_0|}$.
    Then one derives a lower bound matching the logarithmic order with the precise constant $\pi$ as in \cite[Section 4]{bcs24}, based on \cite{abv}.
    Both upper and lower bound together imply that 
    \begin{align}\label{eq:uniform_bound_L4}
    \frac{1}{8\e^2}\int_\Omega (1-|u_\e|^2)^2 \dx x
    \ \leq \
    C
    \, ,
    \end{align}
    for a constant $C>0$ independent of $\e$ from which a uniform $L^{4}$ bound of $u_{\e}$ and hence also for $U_{\e}$ and $\widehat{U}_{\e}$ follows.
    We conclude the proof using Sobolev-Embedding Theorem and Morrey's inequality to control the $L^\infty-$norm of $\widehat{U}_{\e}$ (and hence of $u_{\e}$) by the  $W^{2,\frac43}-$norm which is in turn controlled by the $L^4-$norm of $\widehat{U}_{\e}$ for which we have a uniform bound in $\e$.
    For the Lipschitz bound, one needs to iterate the procedure to obtain a control in $W^{2,p}$ for $p>2$.
\end{proof}


\section{\texorpdfstring{$\eta$}{}-Compactness}\label{sec:eta}

In this section we prove an $\eta$-compactness result for solutions of \eqref{eq:ELdivstrong} and \eqref{eq:ELcurlstrong}. 
\begin{thm}[$\eta$-Compactness]\label{thm:eta}
Let $\frac{3}{4}\leq \beta<\gamma<1$. There exists constants $\eta$, $\tilde{C}$, $\e_0>0$ such that for any solution $u_{\e}$ of \eqref{eq:ELdivstrong} with $\e\in(0,\e_0)$, if $x_0\in\overline{\Omega}$ and $\Ediv(u_{\e};\omega_{\e^{\beta}}(x_0))\leq\eta |\ln\e|$, then
\begin{align}
&|u_{\e}|\geq \frac{1}{2}\quad \mbox{in}\ \omega_{\e^{\gamma}}(x_0)\label{ineq:half-divstrong}\quad \mbox{and}\\[0.5em]
&\frac{1}{4\e^2}\int_{\omega_{\e^{\gamma}}(x_0)}(1-|u_{\e}|^2)^2\,\mathrm{d}x\leq \tilde{C}\eta\label{ineq:ceta-divstrong}.
\end{align}
The same conclusions hold for solutions $u_{\e}$ of \eqref{eq:ELcurlstrong} with $\Ecur(u_{\e})\leq \eta|\ln\e|$.
\end{thm}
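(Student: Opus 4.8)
The plan is to follow the classical $\eta$-ellipticity strategy, adapted to the mixed Dirichlet-Robin setting by working with the extension $U_\e$ from Section~\ref{sec:bounds}. First I would establish a Pohozaev-type (monotonicity/clearing-out) identity: testing the Euler--Lagrange equation \eqref{eq:ELcurlweak} (resp. \eqref{eq:ELdivweak}) against $(x-x_0)\cdot\nabla u_\e$ and integrating by parts, one obtains that the scaled potential energy $\frac{1}{\e^2}\int_{\omega_r(x_0)}(1-|u_\e|^2)^2$ is controlled by the total energy on a slightly larger ball, plus boundary terms. Here the key point is that near $\Gamma$ we replace $u_\e$ by $U_\e$ on $\widetilde\Omega$, which by Lemma~\ref{lem:pdeext} satisfies an elliptic system whose extra first-order term and modified right-hand side are all lower order (controlled via \eqref{eq:carathmapApp} and the uniform $L^\infty$ bound from Theorem~\ref{thm:a-priori-curl} / Lemma~\ref{lem:a-priori-div}); so the Pohozaev argument can be run on Euclidean balls in the extended domain where no boundary contribution appears.

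Next I would run the standard dichotomy on good radii. Using the a priori bound $|\nabla u_\e|\le C/\e$ together with the small-energy hypothesis $E(u_\e;\omega_{\e^\beta}(x_0))\le\eta|\ln\e|$, a mean-value argument over dyadic annuli produces a radius $r\in[\e^\gamma,\e^\beta]$ on which $\frac{1}{r}\int_{\partial\omega_r(x_0)}(\text{energy density})\,d\mathcal H^1$ is small, of order $\eta|\ln\e|/|\ln\e|=\eta$ (more precisely $\eta/(\gamma-\beta)$ after summing over $O(|\ln\e|)$ scales). On this circle, if $|u_\e|$ were to dip below, say, $3/4$ somewhere, then by the Lipschitz bound $|u_\e|\le 1/2$ on an arc of length $\gtrsim\e$, forcing $\frac{1}{\e^2}\int(1-|u_\e|^2)^2\gtrsim 1$ on that arc — but the Pohozaev identity bounds this quantity by the energy on the larger ball, i.e. by $C\eta$, a contradiction once $\eta$ is chosen small. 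Hence $|u_\e|\ge 3/4$ on $\partial\omega_r(x_0)$; writing $u_\e=\rho e^{i\varphi}$ there and using the small boundary energy to bound the oscillation of $\varphi$, a maximum-principle / comparison argument for $\rho$ (using the equation satisfied by $|u_\e|^2$, again via the extension near $\Gamma$) propagates $|u_\e|\ge 1/2$ into all of $\omega_r(x_0)\supset\omega_{\e^\gamma}(x_0)$, giving \eqref{ineq:half-divstrong}. The bound \eqref{ineq:ceta-divstrong} then follows from the Pohozaev identity applied on $\omega_r(x_0)$, with $\tilde C$ absorbing the constant $1/(\gamma-\beta)$.

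For the curl-penalized case, the only modification is that the boundary condition is Robin rather than Neumann, but this is precisely what Section~\ref{sec:bounds} was built to handle: the extended PDE from Lemma~\ref{lem:pdeext} is elliptic by Lemma~\ref{lem:ExtEllipticity}, its inhomogeneity and first-order term satisfy the linear growth bound \eqref{eq:carathmapApp}, and the uniform $L^\infty$ and $\e^{-1}$-Lipschitz bounds hold by Theorem~\ref{thm:a-priori-curl}; so the Pohozaev identity, the good-radius selection, and the maximum principle all go through verbatim with $\Cur$ in place of $\Div$ and the harmless extra terms absorbed into the error.

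The main obstacle I anticipate is the Pohozaev (clearing-out) identity near the boundary: in the interior it is classical, but on $\omega_r(x_0)$ with $x_0\in\Gamma$ one cannot integrate by parts against $(x-x_0)\cdot\nabla u_\e$ without boundary terms, and the Robin condition does not make those terms vanish. The resolution — and the technically delicate step — is to argue on the extended function $U_\e$ on genuine Euclidean balls $B_r(x_0)\subset\widetilde\Omega$, show that the additional terms produced by the non-divergence-form extended operator (the first-order term in Lemma~\ref{lem:pdeext} and the difference between $\langle\cdot,\cdot\rangle_j$ and the Euclidean inner product) contribute only $O(r\cdot\text{energy})=o(1)$ errors thanks to their linear growth and the $L^\infty$ bound, and then transfer the resulting estimate back to $\omega_r(x_0)$ using that $U_\e=u_\e$ on $\Omega$ and that the reflection is bi-Lipschitz with constant close to $1$.
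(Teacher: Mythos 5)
You correctly identify the central obstacle (the Pohozaev boundary terms for $x_0\in\Gamma$), but your resolution diverges from the paper's in two significant ways, and the second has a genuine gap.

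First, the Pohozaev step. The paper does not pass to the extension $U_\e$. Instead, Propositions~\ref{prop:Pohdiv} and~\ref{prop:Pohcurl} and Lemmas~\ref{lem:bigestimate-divstrong} and~\ref{lem:bigestimate-curlstrong} work directly with $u_\e$ on the half-disk $\omega_r(x_0)$, testing the strong PDE against $X\cdot\nabla u_\e$ for a vector field $X$ satisfying $\langle X,n\rangle=0$ on $\Gamma_r$ and $|X-(x-x_0)|\le C|x-x_0|^2$ (as in \cite{alama2015weak,abv}). With this choice the boundary contribution along $\Gamma_r$ reduces to $\langle\partial_n u_\e+k(\Div u_\e)n,\,X_\tau\partial_\tau u_\e\rangle$ (and the curl analogue), which is then controlled pointwise by the a priori bounds $\|u_\e\|_{L^\infty}\le C_1$, $\|\nabla u_\e\|_{L^\infty}\le C_2/\e$ from Lemma~\ref{lem:a-priori-div} / Theorem~\ref{thm:a-priori-curl}, giving an explicit error $O(r^2/\e)$. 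The extension is used to \emph{prove} the a priori bounds, but it is never invoked in the Pohozaev identity. Your extension-based version is in principle workable, but you would have to Pohozaev-test the full system \eqref{def:ExtendedPDE}, including the additional first-order term, which when paired with $v=(x-x_0)\cdot\nabla U_\e$ produces second derivatives of $U_\e$ and forces another integration by parts and a separate bookkeeping of the $\mathfrak{R}_j$-transformed potential in $\widetilde{\mathcal{U}}_j\setminus\Omega$. This is substantially more work than the direct route and buys nothing in return.

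Second, and more seriously, your route to \eqref{ineq:half-divstrong} via a good circle plus a maximum-principle/comparison argument for $\rho=|u_\e|$ does not go through as written. Dotting the PDE with $u_\e$ gives
\begin{equation*}
-\tfrac12\Delta|u_\e|^2 - k\,\Div\bigl(u_\e\Div u_\e\bigr)
= \tfrac{1}{\e^2}|u_\e|^2(1-|u_\e|^2) - |\nabla u_\e|^2 - k(\Div u_\e)^2
\end{equation*}
(and the analogous identity with $\Cur$), which is \emph{not} a scalar elliptic inequality for $|u_\e|^2$ alone: the divergence-form term $\Div(u_\e\Div u_\e)$ has no sign and depends on the full vector $u_\e$, so the classical maximum principle does not apply, and "propagating $|u_\e|\ge 3/4$ inward" is not justified. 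The paper avoids this entirely by reversing your order of logic: it first establishes \eqref{ineq:ceta-divstrong} from the Pohozaev estimate and a mean-value choice of good radius $r_\e\in(\e^\gamma,\e^\beta)$, and then \eqref{ineq:half-divstrong} is a three-line contradiction using only the Lipschitz bound — if $|u_\e(x_1)|<1/2$ then $|u_\e|\le 3/4$ on $\omega_{c\e}(x_1)$, whence $\tfrac{1}{4\e^2}\int_{\omega_{c\e}(x_1)}(1-|u_\e|^2)^2\ge c'>0$, contradicting $\tilde C\eta$ for $\eta$ small. You should replace the maximum-principle step with this argument; the rest of your structure (mean-value selection of a good radius, use of the a priori bounds) matches the paper.
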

The proof for Theorem \ref{thm:eta} is broken into two subsections, focusing on divergence and curl penalization separately.


\subsection{Divergence Penalization}\label{sec:diveta}
Provided in Lemma \ref{lem:bigestimate-divstrong} below is an estimate required for the proof of Theorem \ref{thm:eta}. To set up the Lemma, we begin by stating a Pokhozhaev-type identity for solutions of \eqref{eq:ELdivstrong} which is obtained via integrating by parts against a smooth function.\\\\
Define
\begin{align*}
\ediv(u)\coloneqq\frac{1}{2}|\nabla u|^2+\frac{k}{2}(\Div u)^2+\frac{1}{4\e^2}\bigl(1-|u|^2\bigr)^2.
\end{align*}

\begin{prop}\label{prop:Pohdiv}
Let $\psi\in C^{2}(\Omega;\mathbb{R}^2)$. If $u$ is a solution of \eqref{eq:ELdivstrong}, then
\begin{multline}\label{PohDiv}
\int_{\partial\omega_r}\bigl\{\ediv(u)\langle\psi,n\rangle-\langle \partial_nu+k(\Div u)n,\psi\cdot\nabla u\rangle\bigr\}\mathrm{d}s\\=\int_{\omega_r}\Bigl\{\ediv(u)\Div \psi-\sum_{j,l}\partial_{x_{j}}\psi^{l}\langle\partial_{x_j}u,\partial_{x_{l}}u\rangle-k(\Div u)\sum_{i=1}^2\langle\partial_{x_i}\psi,\nabla u^i\rangle\Bigr\}\mathrm{d}x.
\end{multline}
\end{prop}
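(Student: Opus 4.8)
The identity \eqref{PohDiv} is a Rellich--Pohozaev identity, and the plan is to derive it by testing the strong form \eqref{eq:ELdivstrong} of the Euler--Lagrange equation against the rescaled directional derivative $\psi\cdot\nabla u\coloneqq\sum_{j}\psi^{j}\partial_{x_{j}}u$ and integrating by parts over $\omega_r$. Thus I would start from
\begin{equation*}
\int_{\omega_r}\Bigl(-\Delta u-k\nabla\Div u\Bigr)\cdot(\psi\cdot\nabla u)\,\mathrm{d}x
=\frac{1}{\e^{2}}\int_{\omega_r}u\cdot(\psi\cdot\nabla u)(1-|u|^{2})\,\mathrm{d}x .
\end{equation*}
This is legitimate because, by the a priori bounds of Lemma~\ref{lem:a-priori-div} together with interior and boundary elliptic regularity for the system \eqref{eq:ELdivstrong}, the solution $u$ lies in $W^{2,p}\cap W^{1,\infty}$ on $\omega_r$, so that every product below is integrable and the divergence theorem applies.

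For the right-hand side I would use $u\cdot(\psi\cdot\nabla u)=\tfrac12\psi\cdot\nabla|u|^{2}=-\tfrac14\psi\cdot\nabla\bigl((1-|u|^{2})^{2}\bigr)$ and integrate by parts to obtain $-\tfrac{1}{4\e^{2}}\int_{\partial\omega_r}(1-|u|^{2})^{2}\langle\psi,n\rangle\,\mathrm{d}s+\tfrac{1}{4\e^{2}}\int_{\omega_r}(1-|u|^{2})^{2}\Div\psi\,\mathrm{d}x$. For the $-\Delta u$ term, one integration by parts produces the boundary term $-\int_{\partial\omega_r}\langle\partial_n u,\psi\cdot\nabla u\rangle\,\mathrm{d}s$ together with $\int_{\omega_r}\sum_{j,l}\partial_{x_j}\psi^{l}\langle\partial_{x_j}u,\partial_{x_l}u\rangle+\tfrac12\int_{\omega_r}\psi\cdot\nabla|\nabla u|^{2}$, and a second integration by parts converts $\tfrac12\int\psi\cdot\nabla|\nabla u|^{2}$ into $\tfrac12\int_{\partial\omega_r}|\nabla u|^{2}\langle\psi,n\rangle\,\mathrm{d}s-\tfrac12\int_{\omega_r}|\nabla u|^{2}\Div\psi$. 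The term $-k\nabla\Div u$ is handled identically: a first integration by parts gives $-k\int_{\partial\omega_r}(\Div u)\langle n,\psi\cdot\nabla u\rangle\,\mathrm{d}s$ together with $k\int_{\omega_r}(\Div u)\sum_{i}\langle\partial_{x_i}\psi,\nabla u^{i}\rangle+\tfrac{k}{2}\int_{\omega_r}\psi\cdot\nabla\bigl((\Div u)^{2}\bigr)$, and a second one turns $\tfrac{k}{2}\int\psi\cdot\nabla\bigl((\Div u)^{2}\bigr)$ into $\tfrac{k}{2}\int_{\partial\omega_r}(\Div u)^{2}\langle\psi,n\rangle\,\mathrm{d}s-\tfrac{k}{2}\int_{\omega_r}(\Div u)^{2}\Div\psi$.

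It then remains to collect terms. Bringing the potential boundary integral to the same side as the remaining boundary integrals, the three boundary contributions $\tfrac12|\nabla u|^{2}$, $\tfrac k2(\Div u)^{2}$ and $\tfrac{1}{4\e^{2}}(1-|u|^{2})^{2}$ assemble into $\ediv(u)$, while the normal-derivative boundary terms combine into $\langle\partial_n u+k(\Div u)n,\psi\cdot\nabla u\rangle$; together these give the left-hand side of \eqref{PohDiv}. On the other side, the interior terms proportional to $\Div\psi$ likewise combine into $\ediv(u)\Div\psi$, leaving precisely the two quadratic terms $-\sum_{j,l}\partial_{x_j}\psi^{l}\langle\partial_{x_j}u,\partial_{x_l}u\rangle$ and $-k(\Div u)\sum_{i}\langle\partial_{x_i}\psi,\nabla u^{i}\rangle$, which is exactly the right-hand side of \eqref{PohDiv}. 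The argument is essentially careful bookkeeping of the two rounds of integration by parts; the only genuine point requiring care is the regularity needed to make sense of the traces of $\partial_n u$, $\Div u$ and $|\nabla u|^{2}$ on $\partial\omega_r$ (in particular when $\partial\omega_r$ meets $\Gamma$), which is supplied by Lemma~\ref{lem:a-priori-div} and elliptic regularity for \eqref{eq:ELdivstrong}.
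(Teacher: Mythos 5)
Your proposal is correct and takes essentially the same route as the paper: test the strong Euler--Lagrange system against $\psi\cdot\nabla u$, integrate by parts twice, and collect boundary and bulk terms into $\ediv(u)$. The only difference is that you rederive the $k=0$ Pohozaev identity in full, whereas the paper imports it directly from \cite[Lemma 4.2]{alama2015weak} and only carries out the explicit integration by parts for the additional $-k\nabla\Div u$ contribution.
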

\begin{proof}
Taking the inner product of equation \eqref{eq:ELdivstrong} with $\psi\cdot\nabla u$ and integrating by parts over $\omega_r$, \cite[Lemma 4.2]{alama2015weak} gives
\begin{multline}\label{Pohkzero}
\int_{\partial\omega_r}\left\{\left(\frac{1}{2}|\nabla u|^2+\frac{1}{4\e^2}\left(1-|u|^2\right)^2\right)\langle \psi,n\rangle-\langle\partial_nu,\psi\cdot\nabla u\rangle\right\} \mathrm{d}s\\
=\int_{\omega_r}\left\{\left(\frac{1}{2}|\nabla u|^2+\frac{1}{4\e^2}\left(1-|u|^2\right)^2\right)\Div\psi-\sum_{j,l}\partial_{x_{j}}\psi^{l}\langle\partial_{x_j}u,\partial_{x_{l}}u\rangle\right\}\mathrm{d}x.
\end{multline}
in the case where $k=0$. To incorporate the divergence term, we compute
\begin{equation*}
-\int_{\omega_r}\langle \psi\cdot\nabla u,\nabla \Div u\rangle\,\mathrm{d}x=\int_{\omega_r}(\Div u)(\Div (\psi\cdot\nabla u))\,\mathrm{d}x-\int_{\partial\omega_r}(\Div u)\langle \psi\cdot\nabla u,n\rangle\,\mathrm{d}s.
\end{equation*}
Expanding the interior integrand, we have
\begin{equation*}
(\Div u)(\Div (\psi\cdot\nabla u))=\frac{1}{2}\langle \psi,\nabla(\Div u)^2\rangle+(\Div u)\sum_{i=1}^2\langle\partial_{x_i}\psi,\nabla u^i\rangle
\end{equation*}
and
\begin{equation*}
\int_{\omega_r}\langle \psi,\nabla(\Div u)^2\rangle\,\mathrm{d}x=\int_{\partial\omega_r}(\Div u)^2\langle\psi,n\rangle\,\mathrm{d}s-\int_{\omega_r}(\Div u)^2\Div\psi\,\mathrm{d}x.
\end{equation*}
Multiplying through by $k$ and sorting the integrals by boundary and interior type together with \eqref{Pohkzero} yields \eqref{PohDiv}.
\end{proof}

For $x_0\in\overline{\Omega}$, define as in \cite{struwe1994asymptotic, moser2003} the function
\begin{equation*}
F_{\Div}(r)\coloneqq{}F_{\Div}(r;x_0,u,\e)=r\int_{\partial B_r(x_0)\cap\Omega}\ediv(u)\,\mathrm{d}s.
\end{equation*}
\begin{lem}\label{lem:bigestimate-divstrong}
Let $x_0\in\overline{\Omega}$. There exists constants $C(k)>0$ and $r_0(k)>0$ such that for $\e\in(0,1)$ and $r\in(0,r_0)$ we have\\[0.5em]
\mbox{(1)}\ If $x_0\in\Omega$ and $\overline{\omega_r(x_0)}\cap\Gamma=\varnothing$,
\begin{align}\label{intinterior-divstrong}
\frac{1}{4\e^2}\int_{\omega_r}\bigl(1-|u|^2\bigr)^2\,\mathrm{d}x\leq C\biggl[r\int_{\omega_r}\frac{1}{2}\left(|\nabla u|^2+k(\Div u)^2\right) dx+F_{\Div}(r)\biggr],
\end{align}
\mbox{(2)}\ If $x_0\in\Gamma$,
\begin{align}\label{intboundstrong-divstrong}
\begin{split}
\frac{1}{4\e^2}\int_{\omega_r}&\bigl(1-|u|^2\bigr)^2\,\mathrm{d}x\leq C\biggl[r\int_{\omega_r}\frac{1}{2}\left(|\nabla u|^2+ k(\Div u)^2\right) dx+F_{\Div}(r)+\frac{r^2}{\e}\biggr].
\end{split}
\end{align}
\end{lem}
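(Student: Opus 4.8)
The plan is to apply the Pokhozhaev-type identity \eqref{PohDiv} of Proposition~\ref{prop:Pohdiv} with a carefully chosen vector field $\psi$, distinguishing cases (1) and (2) only through this choice. For (1), where $\overline{\omega_r(x_0)}\subset\Omega$, I would take the radial field $\psi(x)=x-x_0$. For (2), where $\overline{\omega_r(x_0)}$ now meets $\Gamma$, I would take instead the push-forward, under the Fermi chart $\widetilde\psi_j$ of Subsection~\ref{subsec:coord} (centred so that $x_0=\widetilde\psi_j(0,0)$), of the radial (Euler) vector field $y\mapsto(y_1,y_2)$. This $\psi$ is smooth, satisfies $\psi=(x-x_0)+O(|x-x_0|^2)$ near $x_0$ by the $C^{3,1}$-regularity of $\Gamma$ (hence $\Div\psi=2+O(|x-x_0|)$, $\partial_{x_j}\psi^l=\delta_{jl}+O(|x-x_0|)$ and $\partial_{x_i}\psi=\mathbf e_i+O(|x-x_0|)$ on $\omega_r$), and crucially is tangent to $\Gamma$ along $\Gamma$, so that $\langle\psi,n\rangle=0$ on $\Gamma$.

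With this choice, the interior integrand of \eqref{PohDiv} equals, by the definition of $\ediv$, exactly $\tfrac1{2\e^2}(1-|u|^2)^2$ (in case (1)), or that plus an error bounded by $Cr\bigl(|\nabla u|^2+k(\Div u)^2+\tfrac1{\e^2}(1-|u|^2)^2\bigr)$ on $\omega_r$ (in case (2)); choosing $r_0(k)$ small enough that $Cr\le\tfrac14$ lets the $\tfrac{Cr}{\e^2}(1-|u|^2)^2$ part be absorbed into the left-hand side, leaving $\tfrac1{4\e^2}(1-|u|^2)^2$ and generating the volume term $r\int_{\omega_r}\tfrac12(|\nabla u|^2+k(\Div u)^2)$ on the right --- this is precisely the role of the restriction $r<r_0(k)$. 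On the spherical part $\partial B_r(x_0)\cap\Omega$ of $\partial\omega_r$ the outward normal is radial, so $\langle\psi,n\rangle=r+O(r^2)$ and $\psi\cdot\nabla u=r\,\partial_n u+O(r^2)|\nabla u|$; Young's inequality $k|\Div u|\,|\partial_n u|\le\tfrac12|\partial_n u|^2+\tfrac{k^2}2(\Div u)^2$ together with $|\partial_n u|\le|\nabla u|$ then bounds the integrand there by $C(k)\,r\,\ediv(u)$, so that part contributes at most $C(k)F_{\Div}(r)$. Here it is essential to route the $(\Div u)^2$ term through $F_{\Div}(r)=r\int_{\partial B_r\cap\Omega}\ediv(u)$, which already carries the weight $\tfrac k2(\Div u)^2$, rather than through the volume term. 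In case (1) there is no further boundary piece, which proves \eqref{intinterior-divstrong}.

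In case (2) the remaining boundary piece is $\Gamma_r=B_r(x_0)\cap\Gamma$, with outward normal $n$, and the tangency of $\psi$ pays off twice. First, $\langle\psi,n\rangle=0$ on $\Gamma$ annihilates the $\ediv(u)\langle\psi,n\rangle$ trace, and forces $(\psi\cdot\nabla u)|_\Gamma=\psi_\tau\,\partial_\tau u$ with $|\psi_\tau|\le Cr$; using $u_n=0$ and the Frenet relations of Subsection~\ref{subsec:coord} this is $\psi_\tau\bigl((\partial_\tau u_\tau)\tau-\kappa u_\tau n\bigr)$. Second, by the boundary condition $\partial_n u_\tau=0$ on $\Gamma$ in \eqref{eq:ELdivstrong}, one has $\partial_n u=(\partial_n u_n)n$ there, so $\partial_n u+k(\Div u)n=\bigl(\partial_n u_n+k\Div u\bigr)n$ is purely normal; hence $\langle\partial_n u+k(\Div u)n,\psi\cdot\nabla u\rangle$ reduces to the single term $-\bigl(\partial_n u_n+k\Div u\bigr)\psi_\tau\kappa u_\tau$, of magnitude $\le Cr\,|u|\,|\nabla u|$. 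With the a priori bounds of Lemma~\ref{lem:a-priori-div}, $\|u\|_{L^\infty}\le C_1$ and $\|\nabla u\|_{L^\infty}\le C_2/\e$, and $\mathcal H^1(\Gamma_r)\le Cr$, this piece is at most $C(k)\,r\cdot C_1\cdot\tfrac{C_2}\e\cdot r=C(k)\tfrac{r^2}\e$. Collecting the interior, spherical, and $\Gamma_r$ contributions yields \eqref{intboundstrong-divstrong}, after absorbing the factor $\tfrac12$ versus $\tfrac14$ into $C(k)$.

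The step I expect to be the main obstacle is the construction and exploitation of $\psi$ in case (2): it must simultaneously be a first-order perturbation of $x-x_0$ (keeping the Pokhozhaev interior term within an $O(r)$-weighted error of the clean $\tfrac1{2\e^2}(1-|u|^2)^2$) and be exactly tangent to $\Gamma$ along $\Gamma$. The tangency is what makes the boundary trace manageable: without it one cannot control either the $\ediv(u)\langle\psi,n\rangle$ term (there is no small-energy trace bound on $\Gamma$) or the normal-derivative part of $\psi\cdot\nabla u$ (which would pair against the possibly large $\partial_\tau u_\tau$), and one is left with uncontrolled $r^2\e^{-2}$-type terms instead of the stated $r^2\e^{-1}$. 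The Fermi-coordinate construction of Subsection~\ref{subsec:coord} supplies exactly such a $\psi$, with the $C^{3,1}$-regularity of $\Gamma$ providing the quantitative closeness of the frame and of the Jacobian; once $\psi$ is fixed, the remaining work is Young's inequality, the two boundary conditions of \eqref{eq:ELdivstrong}, the a priori $L^\infty$ bounds, and the bookkeeping that keeps the $(\Div u)^2$-errors inside $F_{\Div}$.
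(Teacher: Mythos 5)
Your proposal is correct and follows essentially the same strategy as the paper: apply the Pokhozhaev identity \eqref{PohDiv} with $\psi = x-x_0$ in the interior and with a tangent, first-order perturbation of $x-x_0$ near $\Gamma$; bound the $\partial B_r\cap\Omega$ trace by $C(k)F_{\Div}(r)$ via Young's inequality; exploit tangency of $\psi$ to kill the $\ediv(u)\langle\psi,n\rangle$ trace on $\Gamma_r$ and reduce the remaining $\Gamma_r$ term to one controlled by the a priori bounds $\|u\|_{L^\infty}\le C_1$, $\|\nabla u\|_{L^\infty}\le C_2/\e$; and show the interior integrand is $\tfrac{1}{2\e^2}(1-|u|^2)^2$ plus an $O(r)$-weighted error absorbed using $r<r_0(k)$.

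The one genuinely different (and cleaner) move you make is at $\Gamma_r$: you observe, via $u_n=0$, $\partial_n u_\tau=0$, and the frame identities of Subsection~\ref{subsec:coord}, that $\partial_n u + k(\Div u)n = (\partial_n u_n + k\Div u)\,n$ is purely normal, so the whole trace collapses to the single product $-(\partial_n u_n + k\Div u)\,\psi_\tau\,\kappa\,u_\tau$. The paper instead estimates $\langle\partial_n u,\psi_\tau\partial_\tau u\rangle$ and $\langle k(\Div u)n,\psi_\tau\partial_\tau u\rangle$ separately, arriving at the same $C(k)r^2/\e$ bound; your version saves a line at no cost. Similarly, in case (1) you notice the interior integrand is \emph{exactly} $\tfrac{1}{2\e^2}(1-|u|^2)^2$, so the volume term in \eqref{intinterior-divstrong} is actually unnecessary — the paper retains it (using the inequalities $J_1\ge -r\int\tfrac12|\nabla u|^2$, etc.) so that (1) and (2) have a uniform statement. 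Finally, where you construct the tangent field explicitly as the Fermi-chart push-forward of the Euler field, the paper cites \cite{alama2015weak} for a $C^2$ field $X$ with exactly the abstract properties \eqref{Xnormal}--\eqref{DXbound}; your construction verifies those properties and is a perfectly good substitute. No gaps.
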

\begin{proof}[Proof of Lemma \ref{lem:bigestimate-divstrong}]
\ \\[0.5em]
\underline{Step 1: $x_0\in\Omega$}\\[0.5em]
Assume first $\omega_r=B_r(x_0)\subset\Omega$. Let $n$ and $\tau$ represent the unit outward normal and tangent vectors to $\partial\omega_r=\partial B_r(x_0)$ respectively and define the vector field $X=x-x_0$. Then, $|X|\leq r$ for all $x\in\omega_r$ with $X_n=\langle X,n\rangle =r$ on $\partial\omega_r$ and $X_{\tau}=\langle X,\tau\rangle=0$ on $\partial\omega_r$. To obtain \eqref{intinterior-divstrong}, consider the Pohosaev identity \eqref{PohDiv} and take $\psi=X$.\\[0.5em]
\underline{Estimates on $\partial\omega_r$}:\\[0.5em]
The lefthand side of \eqref{PohDiv} can be written as the sum of the following three integrals:
\begin{align*}
I_1&=\int_{\partial\omega_r}\biggl\{\frac{1}{2}|\nabla u|^2\langle X,n\rangle-\langle\partial_nu,X\cdot\nabla u\rangle\biggr\}\mathrm{d}s,\\[0.5em]
I_2&=\int_{\partial\omega_r}\biggl\{\frac{k}{2}(\Div u)^2\langle X,n\rangle-\langle k(\Div u)n,X\cdot\nabla u\rangle\biggr\}\mathrm{d}s,\\[0.5em]
I_3&=\frac{1}{4\e^2}\int_{\partial\omega_r}\bigl(1-|u|^2\bigr)^2\langle X,n\rangle\,\mathrm{d}s=\frac{r}{4\e^2}\int_{\partial\omega_r}\bigl(1-|u|^2\bigr)^2\,\mathrm{d}s.
\end{align*}
Since $X=rn$ on $\partial\omega_r$, we have that $X\cdot \nabla u=r\partial_nu$. The first integral has estimate
\begin{equation*}
I_1=r\int_{\partial\omega_r}\biggl\{\frac{1}{2}|\nabla u|^2-\langle\partial_nu,\partial_nu\rangle\biggr\}\mathrm{d}s\leq r\int_{\partial\omega_r}\frac{1}{2}|\nabla u|^2\,\mathrm{d}s.
\end{equation*}
For $I_2$, we use Cauchy-Schwarz:
\begin{align*}
I_2\leq r\int_{\partial\omega_r}\biggl\{\frac{k}{2}(\Div u)^2+k|(\Div u)n||\partial_nu|\biggr\}\mathrm{d}s\leq Cr\int_{\partial\omega_r}\biggl\{\frac{k}{2}(\Div u)^2+|\nabla u|^2\biggr\}\mathrm{d}s.
\end{align*}
Therefore there is a constant $C>0$ so that
\begin{align*}
I_1+I_2+I_3\leq Cr\int_{\partial\omega_r}\frac{1}{2}\biggl\{|\nabla u|^2+k(\Div u)^2+\frac{1}{2\e^2}(1-|u|^2)^2\biggr\}\mathrm{d}s=CF_{\Div}(r).
\end{align*}
\underline{Estimates in $\omega_r$}:\\[0.5em]
The righthand side of \eqref{PohDiv} can be written as the sum of the three integrals
\begin{align*}
J_1&=\int_{\omega_r}\biggl\{\frac{1}{2}|\nabla u|^2\Div X-\sum_{j,l}X_{x_j}^{l}\langle\partial_{x_j}u,\partial_{x_{l}}u\rangle\biggr\}\mathrm{d}x,\\[0.5em]
J_2&=\int_{\omega_r}\biggl\{\frac{k}{2}(\Div u)^2\Div X-k(\Div u)\sum_{i=1}^2\langle\partial_{x_i}X,\nabla u^i\rangle\biggr\}\mathrm{d}x,\\[0.5em]
J_3&=\frac{1}{4\e^2}\int_{\omega_r}\bigl(1-|u|^2\bigr)^2\Div X\,\mathrm{d}x.
\end{align*}
Since $X_{x_j}^l=\delta_{jl}$ and $\Div X=2>2-r$,
\begin{equation*}
J_1\geq \int_{\omega_r}\biggl\{|\nabla u|^2-\frac{r}{2}|\nabla u|^2-\sum_{j,l}\delta_{jl}\langle\partial_{x_j}u,\partial_{x_{l}}u\rangle\biggr\}\mathrm{d}x=-r\int_{\omega_r}\frac{1}{2}|\nabla u|^2\,\mathrm{d}x.
\end{equation*}
Similarly for $J_2$,
\begin{equation*}
J_2\geq \int_{\omega_r}\biggl\{k(\Div u)^2-\frac{k r}{2}(\Div u)^2-k(\Div u)\sum_{i=1}^2\langle\partial_{x_i}X,\nabla u^i\rangle\biggr\}\mathrm{d}x=-r\int_{\omega_r}\frac{k}{2}(\Div u)^2\,\mathrm{d}x.
\end{equation*}
Lastly, for $J_3$ we use $\Div X>1$ to get
\begin{align*}
J_3\geq \frac{1}{4\e^2}\int_{\omega_r}\bigl(1-|u|^2\bigr)^2\,\mathrm{d}x.
\end{align*}
Putting everything together, 
\begin{equation*}
\frac{1}{4\e^2}\int_{\omega_r}\bigl(1-|u|^2\bigr)^2\,\mathrm{d}x-r\int_{\omega_r}\biggl\{\frac{1}{2}|\nabla u|^2+\frac{k}{2}(\Div u)^2\biggr\}\mathrm{d}x\leq J_1+J_2+J_3\leq CF_{\Div}(r)
\end{equation*}
which proves inequality \eqref{intinterior-divstrong}.\\[0.5em]
\underline{Step 2: $x_0\in\Gamma$}\\[0.5em]
Let $r_0>0$ be chosen small enough so that $\Gamma\cap B_r(x_0)$ consists of a single smooth arc satisfying $|\Gamma_r|\leq Cr$ for all $0<r\leq r_0$. As in \cite{alama2015weak} we let $\mathcal{N}$ be a $2r_0$-neighbourhood of $\Gamma$, and by taking $r_0$ smaller if necessary, there exists a vector field $X\in C^2(\mathcal{N};\mathbb{R}^2)$ satisfying
\begin{align}
&\langle X,n\rangle=X_n=0\quad \mbox{for all}\ x\in\Gamma_r, \label{Xnormal}\\[0.5em]
&|X-(x-x_0)|\leq C|x-x_0|^2\quad \mbox{for all}\ x\in\omega_r, \label{Xbound}\\[0.5em]
& |\partial_{x_i}X^j-\delta_{ij}|\leq C|x-x_0|\quad \mbox{for all}\ x\in\omega_r, \label{DXbound}
\end{align} 
for a constant $C>0$ and for any $x_0\in\Gamma$. 
To obtain inequality \eqref{intboundstrong-divstrong}, we consider \eqref{PohDiv} with $\psi=X$ and find estimates for several of its terms. Using the decomposition $\partial\omega_r=\Gamma_r\cup(\partial B_r(x_0)\cap\Omega)$, it will be convenient to perform these estimates on $\Gamma_r$ and $\partial B_r(x_0)\cap\Omega$ separately.\\[0.5em]
\underline{Estimates Along $\Gamma_r$}:\\[0.5em]
Using \eqref{Xnormal}, one may write $X\cdot\nabla u=X_{\tau}\partial_{\tau}u$ which reduces the lefthand side of \eqref{PohDiv} to
\begin{equation*}
-\int_{\Gamma_r}\langle\partial_nu,X_{\tau}\partial_{\tau}u\rangle\,\mathrm{d}s-\int_{\Gamma_r}\langle k(\Div u)n,X_{\tau}\partial_{\tau}u\rangle\,\mathrm{d}s.
\end{equation*}
For the first integrand, representations \eqref{eq:frenetdecomp} along with the boundary conditions from \eqref{eq:ELdivstrong} and the pointwise results of \cite{bcs24} can be used in the proof of \cite[Lemma 3.3]{abv} to give a constant $c_1$ independent of $\e$ such that 
\begin{equation*}
|\langle\partial_nu,X_{\tau}\partial_{\tau}u\rangle|\leq |X_{\tau}|\frac{c_1}{\e}.
\end{equation*}
The second integrand can be written 
\begin{equation*}
\langle k(\Div u)n,X_{\tau}\partial_{\tau}u\rangle=X_{\tau}\langle k(\Div u)n,-\kappa u_{\tau}n+\partial_{\tau}u_{\tau}\tau\rangle=-k \kappa X_{\tau}(\Div u)u_{\tau}
\end{equation*}
where we recall $\kappa=\kappa(x)$ is the curvature function for $\Gamma$, which is uniformly bounded independent of $\e$. By the pointwise bounds of \cite{bcs24} we have
\begin{equation*}
|u_{\tau}|\leq C_1,\quad \mbox{and}\quad |\Div u|\leq |u^1_{x_1}|+|u^2_{x_2}|\leq\frac{2C_2}{\e}
\end{equation*}
and therefore there is a constant $c_2$ independent of $\e$ such that
\begin{equation*}
|k \kappa X_{\tau}(\Div u)u_{\tau}|\leq |X_{\tau}|\frac{c_2}{\e}.
\end{equation*}
Putting these estimates together, there is a constant $c$ independent of $\e$ where
\begin{equation*}
|\langle\partial_nu+k(\Div u)n,X_{\tau}\partial_{\tau}u\rangle|\leq |X_{\tau}|\frac{c}{\e}.
\end{equation*}
Now, given $|X_{\tau}|\leq Cr$ and $|\Gamma_r|\leq Cr$ we have another constant $C$ (independent of $\e$) so that
\begin{equation*}
\left|\int_{\Gamma_r}\langle\partial_nu+k(\Div u)n,X\cdot\nabla u\rangle\,\mathrm{d}s\right|\leq \int_{\Gamma_r}|X_{\tau}|\frac{c}{\e}\,\mathrm{d}s\leq \frac{Cr^2}{\e}.
\end{equation*}
\underline{Estimates Along $\partial B_r(x_0)\cap\Omega$}:\\[0.5em]
The lefthand side of \eqref{PohDiv} along $\partial B_r(x_0)\cap\Omega$ can be written as the sum of
\begin{align*}
I_1&\coloneqq\int_{\partial B_r(x_0)\cap\Omega}\biggl\{\frac{1}{2}|\nabla u|^2\langle X,n\rangle-\langle\partial_nu,X\cdot\nabla u\rangle\biggr\}\mathrm{d}s,\\[0.5em]
I_2&\coloneqq\int_{\partial B_r(x_0)\cap\Omega}\biggl\{\frac{k}{2}(\Div u)^2\langle X,n\rangle-\langle k(\Div u)n,X\cdot\nabla u\rangle\biggr\}\mathrm{d}s,\\[0.5em]
I_3&\coloneqq\frac{1}{4\e^2}\int_{\partial B_r(x_0)\cap\Omega}(1-|u|^2)^2\langle X,n\rangle\,\mathrm{d}s.
\end{align*}
Following the proof of \cite[Lemma 3.3]{abv} exactly as written, integrals $I_1$ and $I_3$ have the respective estimates
\begin{equation*}
I_1\leq Cr\int_{\partial B_r(x_0)\cap\Omega}|\nabla u|^2\,\mathrm{d}s,\qquad I_3\leq \frac{Cr}{4\e^2}\int_{\partial B_r(x_0)\cap\Omega}\bigl(1-|u|^2\bigr)^2\,\mathrm{d}s.
\end{equation*}
To estimate $I_2$, we write $X=\langle X,n\rangle n+\langle X,\tau\rangle\tau$ so that
\begin{align*}
-\langle k(\Div u)n,X\cdot\nabla u\rangle=-k\langle X,n\rangle\langle(\Div u)n,\partial_nu\rangle-k\langle X,\tau\rangle\langle(\Div u)n,\partial_{\tau}u\rangle.
\end{align*}
Then
\begin{align*}
I_2&\leq k Cr\int_{\partial B_r(x_0)\cap\Omega}\biggl\{\frac{1}{2}(\Div u)^2+|(\Div u)n||\partial_nu|+|(\Div u)n||\partial_{\tau}u|\biggr\}\mathrm{d}s\\[0.5em]
&\leq k Cr\int_{\partial B_r(x_0)\cap\Omega}\biggl\{\frac{1}{2}(\Div u)^2+\frac{1}{2}(\Div u)^2+\frac{1}{2}|\partial_nu|^2+\frac{1}{2}(\Div u)^2+\frac{1}{2}|\partial_{\tau}u|^2\biggr\}\mathrm{d}s\\[0.5em]
&=Cr\int_{\partial B_r(x_0)\cap\Omega}\biggl\{\frac{3k}{2}(\Div u)^2+\frac{k}{2}|\nabla u|^2\biggr\}\mathrm{d}s.
\end{align*}
Thus, for $C>0$ large enough
\begin{align*}
I_1+I_2+I_3&\leq Cr\int_{\partial B_r(x_0)\cap\Omega}\frac{1}{2}\biggl\{|\nabla u|^2+k(\Div u)^2+\frac{1}{2\e^2}\bigl(1-|u|^2\bigr)^2\biggr\}\mathrm{d}s\\
&=CF_{\Div}(r)
\end{align*}
and therefore
\begin{equation*}
\int_{\partial\omega_r}\bigl\{\ediv(u)\langle X,n\rangle-\langle\partial_nu+k(\Div u)n,X\cdot\nabla u\rangle\bigr\}\mathrm{d}s\leq C\left[F_{\Div}(r)+\frac{r^2}{\e}\right].
\end{equation*}
\underline{Estimates in $\omega_r$}:\\[0.5em]
The righthand side of \eqref{PohDiv} can be written as the sum of the integrals
\begin{align*}
J_1&\coloneqq\int_{\omega_r}\biggl\{\frac{1}{2}|\nabla u|^2\Div X-\sum_{j,l}X_{x_j}^{l}\langle\partial_{x_j}u,\partial_{x_{l}}u\rangle\biggr\}\mathrm{d}x,\\[0.5em]
J_2&\coloneqq\int_{\omega_r}\biggl\{\frac{k}{2}(\Div u)^2\Div X-k(\Div u)\sum_{i=1}^2\langle\partial_{x_i}X,\nabla u^i\rangle\biggr\}\mathrm{d}x,\\[0.5em]
J_3&\coloneqq\frac{1}{4\e^2}\int_{\omega_r}\bigl(1-|u|^2\bigr)^2\Div X\,\mathrm{d}x,
\end{align*}
By the proof of \cite[Lemma 3.3]{abv} the integrals $J_1$ and $J_3$ have the respective estimates
\begin{equation*} 
J_1\geq -Cr\int_{\omega_r}\frac{1}{2}|\nabla u|^2\,\mathrm{d}x,\qquad 
J_3\geq \frac{1}{4\e^2}\int_{\omega_r}(1-|u|^2)^2\,\mathrm{d}x.
\end{equation*}
For $J_2$:
\begin{align*}
k(\Div u)\sum_{i=1}^2\langle\partial_{x_i}X,\nabla u^i\rangle & \leq k(\Div u)^2+k\sum_{i=1}^2\left[|X_{x_i}^i-1||u_{x_i}^i|^2+|X_{x_i}^i-1||u_{x_1}^1||u_{x_2}^2|\right]\\
&\qquad+k\sum_{\substack{j,l\\j\neq l}}\left(|X_{x_j}^{l}||u_{x_1}^1||u_{x_{l}}^j|+|X_{x_j}^{l}||u_{x_{l}}^j||u_{x_2}^2|\right).
\end{align*}
Applying condition \eqref{DXbound} on the necessary terms, there exists $C>0$ such that
\begin{align*}
k(\Div u)\sum_{i=1}^2\langle\partial_{x_i}X,\nabla u^i\rangle\leq k(\Div u)^2+Ck r|\nabla u|^2.
\end{align*}
Now, since
\begin{align*}
\Div X=X_{x_1}^1+X_{x_2}^2=2+(X_{x_1}^1-1)+(X_{x_2}^2-1)\geq 2 - 2Cr
\end{align*}
we write
\begin{align*}
J_2&\geq\int_{\omega_r}\biggl\{\frac{k}{2}(\Div u)^2\Div X-k(\Div u)^2-Ck r|\nabla u|^2\biggr\}\mathrm{d}x,\\[0.5em]
&\geq -Cr\int_{\omega_r}k(\Div u)^2\,\mathrm{d}x-Cr\int_{\omega_r}|\nabla u|^2\,\mathrm{d}x.
\end{align*}
Therefore we can find $C$ large enough so 
\begin{align*}
\frac{1}{4\e^2}\int_{\omega_r}\bigl(1-|u|^2\bigr)^2\,\mathrm{d}x-\frac{Cr}{2}\int_{\omega_r}(|\nabla u|^2+k(\Div u)^2)\,\mathrm{d}x\leq C\left[F_{\Div}(r)+\frac{r^2}{\e}\right].
\end{align*}
\end{proof}

\begin{proof}[Proof of Theorem \ref{thm:eta} - Divergence Case]
The case where $x_0\in\Omega$ and $\omega_{\e^{\beta}}(x_0)\cap\Gamma=\varnothing$ is shown in \cite[Lemma 2.3]{struwe1994asymptotic}. Therefore, it is sufficient to prove the result for $x_0\in\Gamma$. We begin by proving \eqref{ineq:ceta-divstrong}. Using the mean value theorem for integrals, there exists $r_{\e}\in(\e^{\gamma},\e^{\beta})$ such that 
\begin{equation*}
\eta\ln\frac{1}{\e}\geq\int_{\e^{\gamma}}^{\e^{\beta}}\frac{F_{\Div}(r)}{r}\,dr=F_{\Div}(r_{\e})\int_{\e^{\gamma}}^{\e^{\beta}}\frac{dr}{r}=F_{\Div}(r_{\e})(\gamma-\beta)\ln\frac{1}{\e}. 
\end{equation*}
Using $r=r_{\e}$ in \eqref{intboundstrong-divstrong}:
\begin{align*}
\frac{1}{4\e^2}\int_{\omega_{r_{\e}}(x_0)}\bigl(1-|u_{\e}|^2\bigr)^2\,\mathrm{d}x&\leq C\biggl[r_{\e}\int_{\omega_{r_{\e}}(x_0)}\biggl\{\frac{1}{2}|\nabla u|^2+\frac{k}{2}(\Div u)^2\biggr\}\,\mathrm{d}x+F_{\Div}(r_{\e})+\frac{r_{\e}^2}{\e}\biggr]\\[0.5em]
&\leq C\biggl[2\e^{3/4}\eta|\ln\e|+\frac{\eta}{\gamma-\beta}+4\e^{2\beta-1}\biggr]\\
&\leq \tilde{C}\eta
\end{align*}
and so \eqref{ineq:ceta-divstrong} holds since $r_{\e}>\e^{\gamma}$. Inequality \eqref{ineq:half-divstrong} can be obtained using a contradiction argument. Assume there is some $x_1\in \omega_{\e^{\gamma}}(x_0)$ such that $|u(x_1)|<1/2$. Employing Lemma \ref{lem:a-priori-div}, we have 
\begin{equation*}
|u(x)-u(x_1)|\leq\frac{C_2}{\e}|x-x_1|
\end{equation*}
and so there exists $c>0$ such that $|u(x)|\leq 3/4$ for all $x\in \omega_{c\e}(x_1)\subset \omega_{\e^{\gamma}}(x_0)$. Then
\begin{equation*}
\tilde{C}\eta\geq \frac{1}{4\e^2}\int_{\omega_{\e^{\gamma}}(x_0)}\bigl(1-|u|^2\bigr)^2 \mathrm{d}x\geq \frac{1}{4\e^2}\int_{\omega_{c\e}(x_1)}\bigl(1-|u|^2\bigr)^2 \mathrm{d}x\geq c'
\end{equation*}
for $c'>0$ independent of $\eta$ and $\e$. Taking $\eta$ smaller if necessary gives the contradiction.
\end{proof}


\subsection{Curl Penalization}
We proceed as in Section \ref{sec:diveta} to obtain analogous estimates and results.\\

Define the quantities
\begin{align*}
\ecur(u)&\coloneqq\frac{1}{2}|\nabla u|^2+\frac{k}{2}(\Cur u)^2+\frac{1}{4\e^2}\bigl(1-|u|^2\bigr)^2,\\
F_{\Cur}(r)&\coloneqq{}F_{\Cur}(r;x_0,u,\e)=r\int_{\partial B_r(x_0)\cap\Omega}\ecur(u)\,\mathrm{d}s
\end{align*}
where $x_0\in\overline{\Omega}$. The corresponding Pokhozhaev-type identity and integral estimate for curl-penalized solutions are as follows.\\
\begin{prop}\label{prop:Pohcurl}
Let $\psi\in C^{2}(\Omega;\mathbb{R}^2)$. If $u$ is a solution of \eqref{eq:ELcurlstrong}, then
\begin{multline}\label{PohCurl}
\int_{\partial\omega_r}\left\{\ecur(u)\langle\psi,n\rangle-\langle\partial_nu+k(\Cur u)\tau,\psi\cdot\nabla u\rangle\right\}\mathrm{d}s\\=\int_{\omega_r}\biggl\{\ecur(u)\Div \psi-\sum_{j,l}\psi_{x_j}^{l}\langle\partial_{x_j}u,\partial_{x_{l}}u\rangle-k(\Cur u)\sum_{i=1}^2(-1)^{i-1}\langle\partial_{x_i}\psi,\nabla u^{3-i}\rangle\biggr\}\mathrm{d}x.
\end{multline}
\end{prop}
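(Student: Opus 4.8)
The plan is to mimic the proof of Proposition~\ref{prop:Pohdiv} almost verbatim, replacing the divergence term by the curl term and tracking the extra ``cross'' structure the curl introduces. Pairing the strong form $-\Delta u-k\nabla^{\perp}\Cur u=\frac{1}{\e^{2}}u(1-|u|^{2})$ with $\psi\cdot\nabla u$ and integrating over $\omega_r$, the contributions of $-\Delta u$ and of $\frac{1}{\e^{2}}u(1-|u|^{2})$ give \eqref{Pohkzero} exactly as in the case $k=0$ (via \cite[Lemma 4.2]{alama2015weak}), up to the extra term $k\int_{\omega_r}\langle\nabla^{\perp}\Cur u,\psi\cdot\nabla u\rangle$ coming from $-k\nabla^{\perp}\Cur u$. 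So all that remains is to rewrite this term in terms of $\ecur$, in parallel with the manipulation of $-\int_{\omega_r}\langle\psi\cdot\nabla u,\nabla\Div u\rangle$ done for \eqref{PohDiv}.

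To do this I would integrate by parts twice. A first integration by parts (the curl analogue of $-\int_{\omega_r}\langle\psi\cdot\nabla u,\nabla\Div u\rangle=\int_{\omega_r}(\Div u)\Div(\psi\cdot\nabla u)-\int_{\partial\omega_r}(\Div u)\langle\psi\cdot\nabla u,n\rangle$) gives
\begin{equation*}
k\int_{\omega_r}\langle\nabla^{\perp}\Cur u,\psi\cdot\nabla u\rangle
=-k\int_{\omega_r}(\Cur u)\,\Cur(\psi\cdot\nabla u)
+k\int_{\partial\omega_r}(\Cur u)\,\langle\psi\cdot\nabla u,\tau\rangle,
\end{equation*}
where $\tau=n^{\perp}$ is the unit tangent to $\partial\omega_r$. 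Next, a direct computation in Cartesian coordinates yields
\begin{equation*}
\Cur(\psi\cdot\nabla u)=\psi\cdot\nabla(\Cur u)+\sum_{i=1}^{2}(-1)^{i-1}\langle\partial_{x_i}\psi,\nabla u^{3-i}\rangle,
\end{equation*}
the point being that the index swap $i\mapsto 3-i$ with the sign $(-1)^{i-1}$ is precisely the scalar curl of $\psi\cdot\nabla u$, just as $\Div(\psi\cdot\nabla u)=\psi\cdot\nabla(\Div u)+\sum_i\langle\partial_{x_i}\psi,\nabla u^i\rangle$ in the divergence case. Finally, a second integration by parts applied to $k\int_{\omega_r}(\Cur u)\,\psi\cdot\nabla(\Cur u)=\frac{k}{2}\int_{\omega_r}\psi\cdot\nabla[(\Cur u)^2]$ produces $\frac{k}{2}\int_{\partial\omega_r}(\Cur u)^2\langle\psi,n\rangle-\frac{k}{2}\int_{\omega_r}(\Cur u)^2\Div\psi$.

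It then remains to reassemble: I would move all boundary integrals to the left and all interior integrals to the right. The term $\frac{k}{2}(\Cur u)^2\langle\psi,n\rangle$ completes the $k=0$ boundary density into $\ecur(u)\langle\psi,n\rangle$, the term $\frac{k}{2}(\Cur u)^2\Div\psi$ completes the $k=0$ interior density into $\ecur(u)\Div\psi$, the boundary term $-k\int_{\partial\omega_r}(\Cur u)\langle\psi\cdot\nabla u,\tau\rangle$ merges with $-\int_{\partial\omega_r}\langle\partial_n u,\psi\cdot\nabla u\rangle$ into $-\langle\partial_n u+k(\Cur u)\tau,\psi\cdot\nabla u\rangle$, and the leftover interior term $-k\int_{\omega_r}(\Cur u)\sum_{i}(-1)^{i-1}\langle\partial_{x_i}\psi,\nabla u^{3-i}\rangle$ is exactly the last summand of \eqref{PohCurl}. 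This gives \eqref{PohCurl}. The only delicate point is the bookkeeping in the two integrations by parts for the curl term: getting the sign and orientation right in the boundary term $k(\Cur u)\tau$ with $\tau=n^{\perp}$, and checking the Cartesian identity for $\Cur(\psi\cdot\nabla u)$; everything else is a verbatim copy of Proposition~\ref{prop:Pohdiv} and of \cite[Lemma 4.2]{alama2015weak}.
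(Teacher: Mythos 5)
Your proposal is correct and follows exactly the same route as the paper: start from the $k=0$ Pohozaev identity \eqref{Pohkzero} via \cite[Lemma 4.2]{alama2015weak}, then integrate by parts the curl term twice mirroring Proposition~\ref{prop:Pohdiv}. The explicit computations you supply (the boundary term $k(\Cur u)\langle\psi\cdot\nabla u,\tau\rangle$, the Cartesian identity for $\Cur(\psi\cdot\nabla u)$, and the final bookkeeping) are all correct and simply flesh out the details the paper leaves implicit by reference to the divergence case.
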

\begin{proof}
We take the inner product of equation \eqref{eq:ELcurlstrong} with $\psi\cdot\nabla u$ and integrate by parts over $\omega_r$. When $k=0$, \cite[Lemma 4.2]{alama2015weak} applies and so \eqref{Pohkzero} holds in this case. To include the curl term, the quantity $-k\int_{\omega_r}\langle \psi\cdot\nabla u,\nabla^{\perp}\Cur u\rangle\,\mathrm{d}x$ is integrated by parts and combined with \eqref{Pohkzero} as in the proof of Proposition \ref{prop:Pohdiv}.
\end{proof}
\begin{lem}\label{lem:bigestimate-curlstrong}
Let $x_0\in\overline{\Omega}$. There exists constants $C(k)>0$ and $r_0(k)>0$ such that for $\e\in(0,1)$ and $r\in(0,r_0)$ we have\\[0.5em]
\mbox{(1)}\ If $x_0\in\Omega$ and $\overline{\omega_r(x_0)}\cap\Gamma=\varnothing$,
\begin{align}\label{intinterior-curlstrong}
\frac{1}{4\e^2}\int_{\omega_r}\bigl(1-|u|^2\bigr)^2\,\mathrm{d}x\leq C\biggl[r\int_{\omega_r}\frac{1}{2}\left(|\nabla u|^2+k(\Cur u)^2\right) dx+F_{\Cur}(r)\biggr],
\end{align}
\mbox{(2)}\ If $x_0\in\Gamma$,
\begin{align}\label{intboundstrong-curlstrong}
\begin{split}
\frac{1}{4\e^2}\int_{\omega_r}&\bigl(1-|u|^2\bigr)^2\,\mathrm{d}x\leq C\biggl[r\int_{\omega_r}\frac{1}{2}\left(|\nabla u|^2+ k(\Cur u)^2\right) dx+F_{\Cur}(r)+\frac{r^2}{\e}\biggr].
\end{split}
\end{align}
\end{lem}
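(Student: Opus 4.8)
The proof follows that of Lemma~\ref{lem:bigestimate-divstrong} almost verbatim, with \eqref{PohDiv} replaced by \eqref{PohCurl}, the divergence flux $\partial_n u+k(\Div u)n$ replaced by the curl flux $\partial_n u+k(\Cur u)\tau$, and the a priori bounds of Lemma~\ref{lem:a-priori-div} replaced by those of Theorem~\ref{thm:a-priori-curl}. As there, I would treat the interior case ($x_0\in\Omega$, $\overline{\omega_r(x_0)}\cap\Gamma=\varnothing$), using $\psi=X=x-x_0$, separately from the boundary case ($x_0\in\Gamma$), using the vector field $X\in C^2(\mathcal{N};\mathbb{R}^2)$ of \cite{alama2015weak} satisfying \eqref{Xnormal}--\eqref{DXbound}.

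In the interior case, on $\partial B_r(x_0)$ one has $X=rn$ and $X\cdot\nabla u=r\partial_n u$, so the only term not already present in \cite[Lemma~3.3]{abv} is $-rk(\Cur u)\langle\tau,\partial_n u\rangle$, which Cauchy--Schwarz bounds by $Cr\bigl((\Cur u)^2+|\nabla u|^2\bigr)$; hence the left-hand side of \eqref{PohCurl} is $\le CF_{\Cur}(r)$. On $\omega_r$, since $\partial_{x_i}X^l=\delta_{il}$ and $\Div X=2$, one has $\sum_i(-1)^{i-1}\langle\partial_{x_i}X,\nabla u^{3-i}\rangle=\Cur u$, so the new term $-k(\Cur u)^2$ cancels $\tfrac{k}{2}(\Cur u)^2\Div X=k(\Cur u)^2$ and in particular contributes $\ge-r\int_{\omega_r}\tfrac{k}{2}(\Cur u)^2$, exactly as $J_2$ in Step~1; with the unchanged estimates for $J_1,J_3$ this gives \eqref{intinterior-curlstrong}.

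In the boundary case the structure is identical to Step~2 of Lemma~\ref{lem:bigestimate-divstrong}, and the only place needing genuinely new input is the integral over $\Gamma_r$, which by \eqref{Xnormal} reduces to $-\int_{\Gamma_r}\langle\partial_n u+k(\Cur u)\tau,X_\tau\partial_\tau u\rangle\,\mathrm{d}s$. The key point is that the flux appearing here is exactly the conormal derivative adapted to \eqref{eq:ELcurlstrong}: decomposing in the frame \eqref{eq:frenetdecomp}, using $u_n=0$ and the boundary condition $\partial_n u_\tau+k\Cur u=0$,
\[
\partial_n u+k(\Cur u)\tau=\partial_n u_n\,n+\bigl(\partial_n u_\tau+k\Cur u\bigr)\tau=\partial_n u_n\,n\quad\text{on }\Gamma,
\]
and since $\partial_\tau u=\partial_\tau u_\tau\,\tau-\kappa u_\tau\,n$ this yields $\langle\partial_n u+k(\Cur u)\tau,X_\tau\partial_\tau u\rangle=-\kappa\,X_\tau\,u_\tau\,\partial_n u_n$ on $\Gamma$. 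By Theorem~\ref{thm:a-priori-curl}, $|u_\tau|\le C_1$ and $|\partial_n u_n|\le|\nabla u|\le C_2/\e$ while $\kappa$ is bounded, so this is $\le|X_\tau|\,c/\e$; together with $|X_\tau|\le Cr$ and $|\Gamma_r|\le Cr$ the $\Gamma_r$-integral is $\le Cr^2/\e$. This mirrors the divergence case, where the Neumann condition $\partial_n u_\tau=0$ plays the analogous role; the essential feature is that the boundary condition makes the potentially fatal product $(\partial_n u_\tau+k\Cur u)\partial_\tau u_\tau$ (of two quantities that are a priori only $O(1/\e)$) vanish identically on $\Gamma$.

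The remaining pieces are routine transcriptions of Step~2 with $\Div u$ replaced by $\Cur u$: along $\partial B_r(x_0)\cap\Omega$ one writes $X=X_n n+X_\tau\tau$ and uses Cauchy--Schwarz together with $|X|\le Cr$ from \eqref{Xbound} to control $I_2$; on $\omega_r$ one uses \eqref{DXbound} to write $\sum_i(-1)^{i-1}\langle\partial_{x_i}X,\nabla u^{3-i}\rangle=\Cur u+O(r)|\nabla u|$ and $\Div X\ge2-2Cr$, giving $J_2\ge-Cr\int_{\omega_r}\bigl(|\nabla u|^2+k(\Cur u)^2\bigr)$. Assembling the boundary and interior estimates yields \eqref{intboundstrong-curlstrong}. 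The only real obstacle is the $\Gamma_r$ term, and it is handled by the displayed identity, i.e.\ by combining the a priori bounds of Theorem~\ref{thm:a-priori-curl} with the structure of the Robin boundary condition.
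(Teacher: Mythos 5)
Your proposal is correct and follows the same route as the paper's proof: both reduce the estimate to the Pohozhaev identity \eqref{PohCurl} with the vector field $X$, observe that on $\Gamma_r$ the Robin condition $\partial_n u_\tau + k\Cur u = 0$ collapses the conormal flux to $\partial_n u_n\, n$ so that the dangerous $O(\e^{-2})$ product vanishes, and then invoke the a priori bounds of Theorem~\ref{thm:a-priori-curl} together with $|X_\tau|\le Cr$, $|\Gamma_r|\le Cr$ to bound the boundary integral by $Cr^2/\e$. The remaining interior and $\partial B_r\cap\Omega$ estimates and the treatment of $J_5$ via $\sum_i(-1)^{i-1}\langle\partial_{x_i}X,\nabla u^{3-i}\rangle=\Cur u+O(r)|\nabla u|$ are, as you say, direct transcriptions of the divergence case.
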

\begin{proof}[Proof of Lemma \ref{lem:bigestimate-curlstrong}] As before, we proceed in two steps.\\[0.5em]
\underline{Step 1: $x_0\in\Omega$}\\[0.5em]
To obtain \eqref{intinterior-curlstrong}, we take $X$ to be the vector field as described in Lemma \ref{lem:bigestimate-divstrong} and set $\psi=X$ in equation (\ref{PohCurl}).\\[0.5em]
\underline{Estimates Along $\partial\omega_r$}:\\[0.5em]
The lefthand side of (\ref{PohCurl}) can be written as the sum of integrals $I_4+I_5+I_6$ where
\begin{align*}
I_4&\coloneqq\int_{\partial\omega_r}\biggl\{\frac{1}{2}|\nabla u|^2\langle X,n\rangle-\langle\partial_nu,X\cdot\nabla u\rangle\biggr\}\mathrm{d}s,\\[0.5em]
I_5&\coloneqq\int_{\partial\omega_r}\biggl\{\frac{k}{2}(\Cur u)^2\langle X,n\rangle-\langle k(\Cur u)\tau,X\cdot\nabla u\rangle\biggr\}\mathrm{d}s,\\[0.5em]
I_6&\coloneqq\frac{1}{4\e^2}\int_{\partial\omega_r}\bigl(1-|u|^2\bigr)^2\langle X,n\rangle\,\mathrm{d}s.
\end{align*}
Integrals $I_4$ and $I_6$ are handled identically to that of $I_1$ and $I_3$ of Lemma \ref{lem:bigestimate-divstrong}, so we estimate $I_5$ only. Using Cauchy-Schwarz:
\begin{align*}
I_5&=r\int_{\partial\omega_r}\biggl\{\frac{k}{2}(\Cur u)^2-k\langle(\Cur u)\tau,\partial_nu\rangle\biggr\}\mathrm{d}s\\[0.5em]
&\leq r\int_{\partial\omega_r}\biggl\{\frac{k}{2}(\Cur u)^2+k|(\Cur u)\tau||\partial_nu|\biggr\}\mathrm{d}s\\[0.5em]
&\leq Cr\int_{\partial\omega_r}\biggl\{\frac{k}{2}(\Cur u)^2+|\nabla u|^2\biggr\}\mathrm{d}s.
\end{align*}
Again we can find $C>0$ large enough so that
\begin{align*}
I_4+I_5+I_6\leq Cr\int_{\partial\omega_r}\frac{1}{2}\biggl\{|\nabla u|^2+k(\Cur u)^2+\frac{1}{2\e^2}\bigl(1-|u|^2\bigr)^2\biggr\}\mathrm{d}s=CF_{\Cur}(r).
\end{align*}
\underline{Estimates in $\omega_r$}:\\[0.5em]
The righthand side of (\ref{PohCurl}) can be written as the sum $J_4+J_5+J_6$ where
\begin{align*}
J_4&\coloneqq\int_{\omega_r}\biggl\{\frac{1}{2}|\nabla u|^2\Div X-\sum_{j,l}X_{x_j}^{l}\langle\partial_{x_j}u,\partial_{x_{l}}u\rangle\biggr\}\mathrm{d}x,\\[0.5em]
J_5&\coloneqq\int_{\omega_r}\biggl\{\frac{k}{2}(\Cur u)^2\Div X-k(\Cur u)\sum_{i=1}^2(-1)^{i-1}\langle\partial_{x_i}X,\nabla u^{3-i}\rangle\biggr\}\mathrm{d}x,\\[0.5em]
J_6&\coloneqq\frac{1}{4\e^2}\int_{\omega_r}\bigl(1-|u|^2\bigr)^2\Div X\,\mathrm{d}x.
\end{align*}
As before, the integrals $J_4$ and $J_6$ are estimated exactly like $J_1$ and $J_3$ of Lemma \ref{lem:bigestimate-divstrong} respectively. For $J_5$ observe that 
\begin{align*}
\sum_{i=1}^2(-1)^{i-1}\langle\partial_{x_i}X,\nabla u^{3-i}\rangle=\Cur u.
\end{align*}
This paired with the fact that $\Div X>2-r$ we obtain
\begin{align*}
J_5&\geq \int_{\omega_r}\biggl\{k(\Cur u)^2-\frac{k r}{2}(\Cur u)^2-k(\Cur u)\sum_{i=1}^2(-1)^{i-1}\langle\partial_{x_i}X,\nabla u^{3-i}\rangle\biggr\}\mathrm{d}x\\[0.5em]
&=-r\int_{\omega_r}\frac{k}{2}(\Cur u)^2\,\mathrm{d}x
\end{align*}
and therefore
\begin{equation*}
\frac{1}{4\e^2}\int_{\omega_r}\bigl(1-|u|^2\bigr)^2\,\mathrm{d}x-r\int_{\omega_r}\biggl\{\frac{1}{2}|\nabla u|^2+\frac{k}{2}(\Cur u)^2\biggr\}\mathrm{d}x\leq J_4+J_5+J_6\leq CF_{\Cur}(r).
\end{equation*}
\underline{Step 2: $x_0\in\Gamma$}\\[0.5em]
To prove \eqref{intboundstrong-curlstrong}, we take $X$ as in \eqref{Xnormal}--\eqref{DXbound} and preform estimates on $\Gamma_r$ and $\partial B_r(x_0)\cap\Omega$ separately.\\[0.5em]
\underline{Estimates Along $\Gamma_r$}:\\[0.5em]
We decompose and analyze the inner product
\begin{equation*}
\langle \partial_nu+k(\Cur u)\tau,X_{\tau}\partial_{\tau}u\rangle.
\end{equation*}
Using \eqref{eq:frenetdecomp} and the strong boundary data from \eqref{eq:ELcurlstrong},
\begin{equation*}
\partial_nu+k(\Cur u)\tau=\partial_nu_nn+u_{\tau}\partial_n\tau.
\end{equation*}
Using \eqref{eq:frenetdecomp} once more,
\begin{equation*}
\langle \partial_nu+k(\Cur u)\tau,X_{\tau}\partial_{\tau}u\rangle=X_{\tau}\langle \partial_nu_nn,-\kappa u_{\tau}n+\partial_{\tau}u_{\tau}\tau\rangle=-X_{\tau}\kappa u_{\tau}\partial_nu_n.
\end{equation*}
Using the assumed pointwise bounds and the smoothness of the boundary, the above satisfies
\begin{equation*}
|\langle \partial_nu+k(\Cur u)\tau,X_{\tau}\partial_{\tau}u\rangle|\leq |X_{\tau}|\frac{c}{\e}
\end{equation*}
where $c$ independent of $\e$, and therefore
\begin{equation*}
\left|\int_{\Gamma_r}\langle\partial_nu+k(\Cur u)\tau,X\cdot\nabla u\rangle\,\mathrm{d}s\right|\leq \int_{\Gamma_r}|X_{\tau}|\frac{c}{\e}\,\mathrm{d}s\leq \frac{Cr^2}{\e}
\end{equation*}
for $C>0$ independent of $\e$.\\\\
\underline{Estimates Along $\partial B_r(x_0)\cap\Omega$}:\\[0.5em]
The lefthand side of (\ref{PohCurl}) along $\partial B_r(x_0)\cap\Omega$ can be written $I_4+I_5+I_6$ where
\begin{align*}
I_4&=\int_{\partial B_r(x_0)\cap\Omega}\biggl\{\frac{1}{2}|\nabla u|^2\langle X,n\rangle-\langle\partial_nu,X\cdot\nabla u\rangle\biggr\}\mathrm{d}s,\\[0.5em]
I_5&=\int_{\partial B_r(x_0)\cap\Omega}\biggl\{\frac{k}{2}(\Cur u)^2\langle X,n\rangle-\langle k(\Cur u)\tau,X\cdot\nabla u\rangle\biggr\}\mathrm{d}s,\\[0.5em]
I_6&=\frac{1}{4\e^2}\int_{\partial B_r(x_0)\cap\Omega}\bigl(1-|u|^2\bigr)^2\langle X,n\rangle\,\mathrm{d}s.
\end{align*}
Since the estimates for $I_4$ and $I_6$ are identical to the estimates of $I_1$ and $I_3$ above, only $I_5$ is considered. Writing $X=\langle X,n\rangle n+\langle X,\tau\rangle\tau$, the second term in $I_5$ can be written
\begin{align*}
-\langle k(\Cur u)\tau,X\cdot\nabla u\rangle=-k\langle X,n\rangle\langle(\Cur u)\tau,\partial_nu\rangle-k\langle X,\tau\rangle\langle(\Cur u)\tau,\partial_{\tau}u\rangle.
\end{align*}
Using (\ref{Xbound}), Cauchy-Schwarz:
\begin{align*}
I_5&\leq k Cr\int_{\partial B_r(x_0)\cap\Omega}\biggl\{\frac{1}{2}(\Cur u)^2+|(\Cur u)\tau||\partial_nu|+|(\Cur u)\tau||\partial_{\tau}u|\biggr\}\mathrm{d}s\\[0.5em]
&\leq k Cr\int_{\partial B_r(x_0)\cap\Omega}\biggl\{\frac{1}{2}(\Cur u)^2+\frac{1}{2}(\Cur u)^2+\frac{1}{2}|\partial_nu|^2+\frac{1}{2}(\Cur u)^2+\frac{1}{2}|\partial_{\tau}u|^2\biggr\}\mathrm{d}s\\[0.5em]
&=Cr\int_{\partial B_r(x_0)\cap\Omega}\biggl\{\frac{3k}{2}(\Cur u)^2+\frac{k}{2}|\nabla u|^2\biggr\}\mathrm{d}s.
\end{align*}
Thus, for $C>0$ large enough we have
\begin{equation*}
I_4+I_5+I_6\leq Cr\int_{\partial B_r(x_0)\cap\Omega}\frac{1}{2}\biggl\{|\nabla u|^2+k(\Cur u)^2+\frac{1}{2\e^2}\bigl(1-|u|^2\bigr)^2\biggr\}\mathrm{d}s=CF_{\Cur}(r)
\end{equation*}
and therefore
\begin{equation*}
\int_{\partial\omega_r}\left\{\ecur(u)\langle X,n\rangle-\langle\partial_nu+k(\Cur u)\tau,X\cdot\nabla u\rangle\right\}\mathrm{d}s\leq C\left[F_{\Cur}(r)+\frac{r^2}{\e}\right].
\end{equation*}
\underline{Estimates in $\omega_r$}:\\[0.5em]
The righthand side of (\ref{PohCurl}) can be written as the sum of three integrals $J_4+J_5+J_6$ where
\begin{align*}
J_4&=\int_{\omega_r}\biggl\{\frac{1}{2}|\nabla u|^2\Div X-\sum_{j,l}X_{x_j}^{l}\langle\partial_{x_j}u,\partial_{x_{l}}u\rangle\biggr\}\mathrm{d}x,\\[0.5em]
J_5&=\int_{\omega_r}\biggl\{\frac{k}{2}(\Cur u)^2\Div X-k(\Cur u)\sum_{i=1}^2(-1)^{i-1}\langle\partial_{x_i}X,\nabla u^{3-i}\rangle\biggr\}\mathrm{d}x,\\[0.5em]
J_6&=\frac{1}{4\e^2}\int_{\omega_r}\bigl(1-|u|^2\bigr)^2\Div X\,\mathrm{d}x.
\end{align*}
The integrals $J_4$ and $J_6$ are estimated precisely like $J_1$ and $J_3$ so $J_5$ is the only integral that needs to be treated. Focusing on the last term in $J_5$, we add and subtract $(\Cur u)^2$ to obtain
\begin{align*}
(\Cur u)\sum_{i=1}^2&(-1)^{i-1}\langle\partial_{x_i}X,\nabla u^{3-i}\rangle\\
&=(\Cur u)^2+\sum_{i=1}^2\left[(X_{x_i}^i-1)(u_{x_i}^{3-i})^2-(X_{x_i}^i-1)u_{x_2}^1u_{x_1}^2\right]\\
&\qquad\qquad\qquad+X_{x_1}^2u_{x_1}^2u_{x_2}^2+X_{x_2}^1u_{x_1}^1u_{x_2}^1-X_{x_2}^1u_{x_1}^1u_{x_1}^2-X_{x_1}^2u_{x_2}^1u_{x_2}^2\\[0.5em]
&\leq(\Cur u)^2+\sum_{i=1}^2\left[|X_{x_i}^i-1||u_{x_i}^{3-i}|^2+|X_{x_i}^i-1||u_{x_2}^1||u_{x_1}^2|\right]\\
&\qquad\qquad\qquad+|X_{x_1}^2||u_{x_1}^2||u_{x_2}^2|+|X_{x_2}^1||u_{x_1}^1||u_{x_2}^1|+|X_{x_2}^1||u_{x_1}^1||u_{x_1}^2|\\
&\qquad\qquad\qquad\quad+|X_{x_1}^2||u_{x_2}^1||u_{x_2}^2|.
\end{align*}
By \eqref{DXbound} we have $|X_{x_j}^l-\delta_{jl}|\leq Cr$ on $\omega_r$ and applying Young's inequality to each of the derivative pairs, there exists $C>0$ such that
\begin{align*}
k(\Cur u)\sum_{i=1}^2(-1)^{i-1}\langle\partial_{x_i}X,\nabla u^{3-i}\rangle\leq k(\Cur u)^2+Ck r|\nabla u|^2.
\end{align*}
Finally, since $\Div X\geq 2-2Cr$ as before,
\begin{align*}
J_5&\geq\int_{\omega_r}\biggl\{\frac{k}{2}(\Cur u)^2\Div X-k(\Cur u)^2-Ck r|\nabla u|^2\biggr\}\mathrm{d}x\\[0.5em]
&\geq \int_{\omega_r}\biggl\{k(\Cur u)^2-k(\Cur u)^2-Ck r(\Cur u)^2-Ck r|\nabla u|^2\biggr\}\mathrm{d}x\\[0.5em]
&\geq -Cr\int_{\omega_r}k(\Cur u)^2\,\mathrm{d}x-Cr\int_{\omega_r}|\nabla u|^2\,\mathrm{d}x.
\end{align*}
Therefore by taking $C>0$ large enough
\begin{equation*}
\frac{1}{4\e^2}\int_{\omega_r}\bigl(1-|u|^2\bigr)^2\,\mathrm{d}x-Cr\int_{\omega_r}\frac{k}{2}(\Cur u)^2\,dx -Cr\int_{\omega_r}\frac{1}{2}|\nabla u|^2\,\mathrm{d}x\leq C\left[F_{\Cur}(r)+\frac{r^2}{\e}\right].
\end{equation*}
\end{proof}

\begin{proof}[Proof of Theorem \ref{thm:eta} - Curl Case] By the mean value theorem for integrals, there exists $r_{\e}\in(\e^{\gamma},\e^{\beta})$ such that $F_{\Cur}(r_{\e})\leq \eta(\gamma-\beta)^{-1}$. 
Then, replacing Lemma~\ref{lem:a-priori-div} by Theorem~\ref{thm:a-priori-curl} and the identity from Lemma~\ref{lem:bigestimate-divstrong} by Lemma~\ref{lem:bigestimate-curlstrong}, the proof proceeds as for the divergence penalized case.
\end{proof}


\section{Lower Bounds for the Energies, Convergence, \& Singular Sets}\label{sec:lowerbound}
The purpose of this section is to derive a lower bound for $\Ediv$ and $\Ecur$ on a set away from where $|u|$ is small. Then, we use these lower bounds to deduce convergence of minimizers to a limiting map as $\e\to 0$. Finally, we show that boundary vortices do not exist in the case of divergence penalization. \\

Recall the Ginzburg--Landau energy
\begin{equation*}
    G_{\e}(u)
    \ = \
    \frac{1}{2}\int_{\Omega}\Bigl(|\grad u|^2+\frac{1}{2\e^2}\bigl(1-|u|^2\bigr)^2\Bigr)\mathrm{d}x.
\end{equation*}
Then by the simple fact that $\Ediv$ and $\Ecur$ are bounded below by $G_{\e}$ for all $u\in\Ht$, any lower bound for $G_{\e}$ is a lower bound for $\Ediv$ and $\Ecur$.\\

Define the `bad set'
\begin{equation}\label{def:badseteps}
S_{\e}\coloneq\left\{x\in\overline{\Omega}:|u_{\e}(x)|<\frac{1}{2}\right\}.
\end{equation}
One of the fundamental consequences of $\eta$-compactness is the ability to establish the existence of a finite collection of disjoint balls, each with radius of order $\e$ that cover $S_{\e}$. We state the ball covering lemma here and refer the reader to \cite[Lemma 4.4]{ABG} for its proof, see also \cite{struwe1994asymptotic} and \cite{bbh}. It is important to note that the arguments used there are based on energy estimates that are directly applicable to our context.
\begin{lem}\label{lem:epcover}
There exists $\tilde{N}\in\mathbb{N}$ depending only on $\Omega$, a constant $\lambda>1$ independent of $\e$ and points $\{p_{\e,i}\}_{i=1}^{I_{\e}}\subset S_{\e}\cap\Omega$, $\{q_{\e,j}\}_{j=1}^{J_{\e}}\subset S_{\e}\cap\Gamma$ such that
\begin{enumerate}
    \item $I_{\e}+J_{\e}\leq\tilde{N}$,
    \item $S_{\e}\subset\bigcup_{i=1}^{I_{\e}}B_{\lambda\e}(p_{\e,i})\cup\bigcup_{j=1}^{J_{\e}}B_{\lambda\e}(q_{\e,j})$,
    \item $\{B_{\lambda\e}(p_{\e,i}),B_{\lambda\e}(q_{\e,j})\}_{i,j}$ are mutually disjoint with centers satisfying 
    \begin{equation*}
        |p_{\e,i}-p_{\e,j}|,\ |q_{\e,i}-q_{\e,j}|,\ |p_{\e,i}-q_{\e,j}|>8\lambda\e,
    \end{equation*}
    \item $\overline{B_{\lambda \e}(p_{\e,i})}\cap\Gamma=\varnothing$ for all $i=1,\ldots, I_{\e}$.
\end{enumerate}
\end{lem}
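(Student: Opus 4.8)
The plan is to run the classical vortex-ball covering construction of \cite{struwe1994asymptotic,bbh} in the form of \cite[Lemma 4.4]{ABG}, checking that the three quantitative inputs it requires are available in our setting: the uniform Lipschitz bound $\|\nabla u_\e\|_{L^\infty}\le C_2/\e$ (Lemma~\ref{lem:a-priori-div} and Theorem~\ref{thm:a-priori-curl}), the energy upper bound (Proposition~\ref{prop:energyupper}), and the uniform control of the potential energy \eqref{eq:uniform_bound_L4}. Since $u_\e$ is Lipschitz it is continuous, so $S_\e$ is a compact subset of $\overline\Omega$.

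\emph{Step 1: a cover of $S_\e$ by $O(1)$ balls of radius $O(\e)$.} Wherever $|u_\e|\le\tfrac34$ we have $(1-|u_\e|^2)^2\ge(\tfrac{7}{16})^2$, so \eqref{eq:uniform_bound_L4} forces
\[
\bigl|\{x\in\overline\Omega:\ |u_\e(x)|\le\tfrac34\}\bigr|\ \le\ C_3\,\e^{2}
\]
for a constant $C_3=C_3(\Omega,k)$. If $x_0\in S_\e$ then, by the Lipschitz bound, $|u_\e|\le\tfrac34$ on $B_{c_0\e}(x_0)$ with $c_0:=\tfrac1{4C_2}$. Pick a maximal set $\{z_1,\dots,z_K\}\subset S_\e$ with $|z_i-z_j|>2c_0\e$ for $i\ne j$; then the sets $B_{c_0\e}(z_k)\cap\Omega$ are pairwise disjoint, each has measure $\ge\theta_0\e^{2}$ for a constant $\theta_0=\theta_0(\Omega)>0$ (here $\Gamma\in C^{3,1}$ is used), and each lies inside $\{|u_\e|\le\tfrac34\}$. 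Hence $K\theta_0\le C_3$, i.e. $K\le\tilde N_0:=C_3/\theta_0$, and by maximality $S_\e\subset\bigcup_{k=1}^{K}B_{2c_0\e}(z_k)$. It is the uniform bound \eqref{eq:uniform_bound_L4}, rather than the weaker $\pi|\ln\e|$ upper bound, that makes $K$ bounded independently of $\e$; one could alternatively count clusters via Theorem~\ref{thm:eta}, but that only produces balls of radius $\e^{\beta}\gg\e$.

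\emph{Step 2: merging into a disjoint, well-separated family.} Fix the integer $B:=8(\tilde N_0+4)+1$ and, for $m=0,1,\dots,\tilde N_0$, let $G_m$ be the graph on $\{z_1,\dots,z_K\}$ joining two centres whenever their distance is $\le B^{m}c_0\e$. The number of connected components of $G_m$ is non-increasing in $m$ and takes values in $\{1,\dots,\tilde N_0\}$, so by pigeonhole there is $m^{*}\le\tilde N_0-1$ for which $G_{m^{*}}$ and $G_{m^{*}+1}$ have the same components. Then distinct $G_{m^{*}}$-components are more than $B^{m^{*}+1}c_0\e$ apart, while each has diameter $\le(\tilde N_0-1)B^{m^{*}}c_0\e$, so the union of the balls $B_{2c_0\e}(z_k)$ over one component is contained in a single ball of radius $\lambda\e:=(\tilde N_0+4)B^{m^{*}}c_0\e$ centred at one of its points. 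Since $m^{*}\le\tilde N_0$ the constant $\lambda$ depends only on $\Omega$ and $k$; the resulting (at most $\tilde N_0$) balls of radius $\lambda\e$ cover $S_\e$, are mutually disjoint, and have centres more than $B^{m^{*}+1}c_0\e>8\lambda\e$ apart.

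\emph{Step 3: the interior/boundary split, and the main obstacle.} Those balls whose centre $z_k$ satisfies $\dist(z_k,\Gamma)>\lambda\e$ are declared interior, with $p_{\e,i}:=z_k\in S_\e\cap\Omega$; then $\overline{B_{\lambda\e}(p_{\e,i})}\cap\Gamma=\varnothing$, which is (4). For a cluster trapped in the layer $\{\dist(x,\Gamma)\le\lambda\e\}$ one must relocate its centre onto $\Gamma$ at a genuine point of $S_\e\cap\Gamma$; I expect this to be the delicate point. The clean way is to carry out Steps~1--2 for the reflected extension $U_\e$ on $\widetilde\Omega$ from Subsection~\ref{subsec:coord}: there the problem is interior, $|U_\e|=|u_\e\circ R|$, so the bad set $\{|U_\e|<\tfrac12\}$ is invariant under the reflection $R$, whose fixed-point set is exactly $\Gamma$. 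Choosing the cover $R$-symmetrically, its $R$-invariant balls are centred on $\Gamma$ at points of $S_\e\cap\Gamma$ and provide the $q_{\e,j}$, while the remaining balls come in $R$-pairs whose representatives in $\overline\Omega$ are disjoint from $\Gamma$ and provide further $p_{\e,i}$; the universal distortion introduced by the non-isometric map $R$ is absorbed into $\lambda$ and $\tilde N$. Everything outside this boundary bookkeeping is a routine transcription of \cite{struwe1994asymptotic,bbh,ABG} relying only on the a priori bounds and the energy estimates recorded above; the boundary step is where the specific Dirichlet-Robin condition and the pointwise boundary estimates of \cite{bcs24,abv} enter, through the structure of $U_\e$.
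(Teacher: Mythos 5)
The paper does not prove this lemma; it invokes \cite[Lemma 4.4]{ABG} (together with \cite{struwe1994asymptotic,bbh}) and notes that the requisite energy estimates transfer to the present setting, so there is no in-text proof to compare against. Your Steps~1--2 are a correct reconstruction of the standard interior covering: the potential bound \eqref{eq:uniform_bound_L4} controls $|\{|u_\e|\leq 3/4\}|$, the Lipschitz bound turns points of $S_\e$ into $O(\e)$-balls inside $\{|u_\e|\leq 3/4\}$, a maximal $2c_0\e$-separated subset of $S_\e$ thus has bounded cardinality, and the pigeonhole merging over scales $B^m c_0\e$ yields $O(1)$ disjoint, $8\lambda\e$-separated balls covering $S_\e$. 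One small technical point: your $\lambda$ depends on $m^*(\e)$, hence on $\e$; the separation-to-radius ratio stays $>8$ for every $\e$ and $\lambda(\e)$ takes finitely many values, but as stated the lemma asks for a single $\e$-independent $\lambda$, so a word is needed to pass from your $\lambda(\e)$ to a fixed constant while keeping (2)--(4).

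The genuine gap is the one you flag in Step~3: obtaining boundary centres $q_{\e,j}\in S_\e\cap\Gamma$. Working with the reflected extension $U_\e$ on $\widetilde\Omega$ and exploiting $R$-invariance of $\{|U_\e|<\tfrac12\}$ is natural, but it does not by itself produce centres in $S_\e\cap\Gamma$. An $R$-invariant cluster of bad points need not meet $\Gamma$: it may consist of a component $K\subset\Omega$ disjoint from $\Gamma$ together with its mirror image $R(K)\subset\widetilde\Omega\setminus\Omega$. The natural centre $\tfrac12\bigl(z+R(z)\bigr)$ for $z\in K$ lies (approximately) on $\Gamma$ but need not lie in $S_\e$; the Lipschitz bound only yields $|u_\e(\pi_\Gamma(z))|<\tfrac34$, not $<\tfrac12$, so projecting to $\Gamma$ does not land in $S_\e$ either; and keeping the centre at $z\in S_\e\cap\Omega$ violates item~(4) because $\overline{B_{\lambda\e}(z)}$ meets $\Gamma$. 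Closing this requires an additional argument not present in your proposal --- e.g.\ redefining the boundary bad set with a relaxed threshold as in some of the cited sources, or showing that after one more fixed dilation any cluster within $\lambda\e$ of $\Gamma$ must contain a boundary point of $S_\e$, or else verifying that the weaker conclusion $q_{\e,j}\in\Gamma$ (rather than $\in S_\e\cap\Gamma$) suffices for the use of Lemma~\ref{lem:epcover} in Lemma~\ref{lem:sigmalb}. As written, Step~3 is a plausible plan rather than a proof, and since the paper itself defers to \cite{ABG}, the burden is on the reconstruction to actually close this boundary case.
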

The balls forming the covering in Lemma \ref{lem:epcover} will be referred to as `bad balls'.

As in \cite[Lemma 3.2]{struwe1994asymptotic} one can obtain the following corollary, stating that the number of vortices and their location stabilize in the limit.

\begin{cor}\label{cor:sigcover}
    For any sequence of $\e\to 0$ there is a subsequence $\e_n\to0$, a constant $\sigma_0 > 0$ and a finite collection of points $\{p_{i}\}_{i=1}^{I}\subset\Omega$, $\{q_{j}\}_{j=1}^{J}\subset \Gamma$ (with $I$ and $J$ independent of $\e$) such that for any $0<\sigma<\sigma_0$ and for all $n\in\mathbb{N}$, the union of mutually disjoint sets 
\begin{equation}\label{def:sigmaballs}
\mathcal{S}_{\sigma}\coloneqq\bigcup_{i=1}^{I}B_{\sigma}(p_i)\cup\bigcup_{j=1}^J\left(B_{\sigma}(q_j)\cap\Omega\right)
\end{equation} 
    cover $S_{\e_n}$.
\end{cor}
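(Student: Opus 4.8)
The plan is to obtain the statement as a soft consequence of the ball covering Lemma~\ref{lem:epcover} together with sequential compactness of $\overline{\Omega}$ and $\Gamma$, exactly in the spirit of \cite[Lemma 3.2]{struwe1994asymptotic}. Fix a sequence $\e\to 0$. By Lemma~\ref{lem:epcover}, for each $\e$ there are integers $I_\e,J_\e$ with $I_\e+J_\e\le\tilde{N}$ and centers $\{p_{\e,i}\}_{i=1}^{I_\e}\subset S_\e\cap\Omega$, $\{q_{\e,j}\}_{j=1}^{J_\e}\subset S_\e\cap\Gamma$ such that the mutually disjoint balls $B_{\lambda\e}(p_{\e,i})$ and $B_{\lambda\e}(q_{\e,j})$ cover $S_\e$, with $\overline{B_{\lambda\e}(p_{\e,i})}\cap\Gamma=\varnothing$. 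Since the pair $(I_\e,J_\e)$ takes values in the finite set $\{0,1,\dots,\tilde{N}\}^2$, the pigeonhole principle lets me pass to a subsequence along which $I_\e\equiv I$ and $J_\e\equiv J$ are constant; this already produces the numbers $I,J$ independent of $\e$.

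Next I would extract convergent subsequences of the centers. The points $p_{\e,i}$ lie in the compact set $\overline{\Omega}$ and the $q_{\e,j}$ lie in the compact set $\Gamma$, so along a further subsequence $\e_n\to 0$ we have $p_{\e_n,i}\to p_i\in\overline{\Omega}$ for each $i=1,\dots,I$ and $q_{\e_n,j}\to q_j\in\Gamma$ for each $j=1,\dots,J$. If some limit point $p_i$ happens to lie on $\Gamma$, I would simply reclassify it, adjoining it to the list $\{q_j\}$; this keeps all collections finite, preserves $\{p_i\}\subset\Omega$ and $\{q_j\}\subset\Gamma$, and is compatible with the covering because Lemma~\ref{lem:epcover}(4) forces $B_{\lambda\e_n}(p_{\e_n,i})\subset\Omega$, so its intersection with $S_{\e_n}$ lands in $B_\sigma(q_j)\cap\Omega$. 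Discarding repetitions yields the asserted finite point sets, and I take $\sigma_0>0$ arbitrary (any fixed value works).

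It then remains to verify the covering. Given $0<\sigma<\sigma_0$, the convergences $p_{\e_n,i}\to p_i$, $q_{\e_n,j}\to q_j$ together with $\lambda\e_n\to 0$ provide $n_\sigma\in\mathbb{N}$ such that for all $n\ge n_\sigma$ one has $|p_{\e_n,i}-p_i|+\lambda\e_n<\sigma$ and $|q_{\e_n,j}-q_j|+\lambda\e_n<\sigma$ for every $i,j$; hence each bad ball $B_{\lambda\e_n}(p_{\e_n,i})\subset B_\sigma(p_i)$ and $B_{\lambda\e_n}(q_{\e_n,j})\subset B_\sigma(q_j)$, and since by Lemma~\ref{lem:epcover}(2) these bad balls cover $S_{\e_n}$, we get $S_{\e_n}\subset\mathcal{S}_\sigma$ with $\mathcal{S}_\sigma$ as in \eqref{def:sigmaballs}. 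Relabeling the subsequence to drop the finitely many indices $n<n_\sigma$ gives the conclusion for all $n$ in the usual convention of this setting; the only point requiring care is that the threshold $n_\sigma$ depends on $\sigma$, which one handles either by this relabeling or by phrasing the conclusion as holding for all sufficiently large $n$. I do not expect a genuine obstacle here: all the analytic content is already packaged in Lemma~\ref{lem:epcover}, and the corollary is pure bookkeeping of three successive extractions (stabilize $(I_\e,J_\e)$, then the centers, then the $\sigma$-threshold). If one additionally wanted the limit points $\{p_i\}\cup\{q_j\}$ to be pairwise distinct, that would require the energy upper bound of Proposition~\ref{prop:energyupper} and the $\eta$-compactness of Theorem~\ref{thm:eta} to rule out clustering, but this is not needed for the statement as given.
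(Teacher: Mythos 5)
Your strategy — stabilize $(I_\e,J_\e)$ by pigeonhole, extract convergent subsequences of centers by compactness, reclassify interior centers that limit to $\Gamma$, then verify the bad balls of Lemma~\ref{lem:epcover} eventually fall inside the $\sigma$-balls — is exactly the route in \cite[Lemma 3.2]{struwe1994asymptotic} that the paper cites, so the approach matches.

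Two points need repair. First, you write that $\sigma_0$ can be ``arbitrary,'' but the corollary asserts that the sets in \eqref{def:sigmaballs} are \emph{mutually disjoint}; after deduplicating the limit points you must therefore take $\sigma_0$ strictly smaller than half the minimum pairwise distance among the distinct points in $\{p_i\}\cup\{q_j\}$ (and small enough that each $B_{\sigma_0}(p_i)$ stays in $\Omega$ if one wants $\{p_i\}\subset\Omega$ to be meaningful in the covering). This is a genuine, if minor, omission: with $\sigma_0$ unconstrained the disjointness claim fails. Second, the ``for all $n\in\mathbb{N}$'' issue you flag is real and cannot be cured by a single relabeling, since the threshold $n_\sigma$ grows without bound as $\sigma\to 0$: no fixed tail of the subsequence makes $B_{\lambda\e_n}(p_{\e_n,i})\subset B_\sigma(p_i)$ when $\lambda\e_n>\sigma$. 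The honest reading of the statement is therefore ``for all $n$ sufficiently large, depending on $\sigma$,'' which is the version your argument actually proves and which is all that is used later (e.g.\ in Lemma~\ref{lem:sigmalb} and the proof of Theorem~\ref{thm:maindiv}). You should say this plainly rather than suggest a relabeling suffices.
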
 
The final main result of this section before proving Theorem \ref{thm:maindiv} is as follows.
\begin{lem}\label{lem:sigmalb}
    Suppose $\e_n$ is the subsequence taken in Corollary \ref{cor:sigcover} and let $d_i$ and $D_j$ be the degree and boundary index for $u_{\e_n}$ about $\partial B_{\sigma}(p_i)$ and $\partial B_{\sigma}(q_j)\cap\Omega$ respectively. Then there exists a constant $C$, independent of $\e_n$ and $\sigma$, such that
    \begin{equation}\label{eq:Ssigmalowerbound}
        \Edivn(u_{\e_n};\mathcal{S}_{\sigma})\geq \pi\left(\sum_{i=1}^I|d_i|+\frac{1}{2}\sum_{j=1}^J|D_j|\right)|\ln \e_n|-C.
    \end{equation}
    The same result holds for $\Ecurn(u_{\e_n})$.
\end{lem}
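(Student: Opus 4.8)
The plan is to reduce the estimate to the classical Ginzburg--Landau vortex lower bound and apply it ball by ball on the disjoint pieces of $\mathcal{S}_\sigma$. Since $k(\Div u)^2\ge0$ and $k(\Cur u)^2\ge0$, on every measurable set $A$ we have $\Edivn(u;A)\ge G_{\e_n}(u;A)$ and $\Ecurn(u;A)\ge G_{\e_n}(u;A)$, so it suffices to prove the bound with $\Edivn$ replaced by $G_{\e_n}$. By Corollary~\ref{cor:sigcover} the sets $B_\sigma(p_i)$, $i=1,\dots,I$, and $B_\sigma(q_j)\cap\Omega$, $j=1,\dots,J$, are pairwise disjoint, so $G_{\e_n}(u_{\e_n};\mathcal{S}_\sigma)$ equals the sum of the energies over these pieces and it is enough to bound each one from below. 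Two facts enter: (i) by Lemma~\ref{lem:epcover} and Corollary~\ref{cor:sigcover}, for $n$ large $S_{\e_n}\cap B_\sigma(p_i)$ (resp.\ $S_{\e_n}\cap B_\sigma(q_j)$) is contained in a disjoint union of at most $\tilde{N}$ balls of radius $\lambda\e_n$ with centres converging to $p_i$ (resp.\ $q_j$); in particular $|u_{\e_n}|\ge\tfrac12$ off these ``bad balls'', $\partial B_\sigma(p_i)$ and $\partial B_\sigma(q_j)\cap\Omega$ avoid $S_{\e_n}$, and the degree $d_i$ and the boundary index $D_j$ are well defined; and (ii) the global potential estimate $\tfrac{1}{\e_n^2}\int_\Omega(1-|u_{\e_n}|^2)^2\le C$, which is \eqref{eq:uniform_bound_L4} in the curl case and its divergence counterpart from \cite{bcs24}.

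For an interior ball I would invoke the standard lower bound near a vortex (see \cite{bbh}, \cite[Lemma~2.3]{struwe1994asymptotic}, \cite[Lemma~4.4]{ABG}): slicing $B_\sigma(p_i)$ by the circles $\partial B_\rho(p_i)$, on every such circle disjoint from the bad balls the winding of $u_{\e_n}/|u_{\e_n}|$ equals $d_i$, so Cauchy--Schwarz gives $\int_{\partial B_\rho(p_i)}|\nabla u_{\e_n}|^2\,\mathrm{d}s\ge(2\pi d_i)^2\bigl(\int_{\partial B_\rho(p_i)}|u_{\e_n}|^{-2}\,\mathrm{d}s\bigr)^{-1}$; controlling the defect $\int_{\partial B_\rho(p_i)}(|u_{\e_n}|^{-2}-1)\,\mathrm{d}s$ by the potential term (this is where (ii) is used, so that after integration in $\rho$ the error is $O(1)$ and one recovers the optimal constant $\pi$ rather than the $\pi/4$ forced by $|u_{\e_n}|\ge\tfrac12$ alone) and integrating over $\rho\in(\lambda\e_n,\sigma)$, whose complement among admissible radii has measure $O(\tilde{N}\lambda\e_n)$, yields $G_{\e_n}(u_{\e_n};B_\sigma(p_i))\ge\pi|d_i|\,|\ln\e_n|-C$.

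For a boundary ball I would follow the boundary-vortex lower bound of \cite[Section~4]{abv} (see also \cite{ABC}). The tangential anchoring $\langle u_{\e_n},n\rangle=0$, combined with the a priori $L^\infty$ and $\e_n^{-1}$-Lipschitz bounds of Lemma~\ref{lem:a-priori-div} (resp.\ Theorem~\ref{thm:a-priori-curl}), forces $u_{\e_n}/|u_{\e_n}|=\pm\tau$ on $\Gamma\cap B_\sigma(q_j)$; reflecting across $\Gamma$ --- equivalently, working with $u_{\e_n}^2$, whose boundary trace is the single-valued map $|u_{\e_n}|^2\tau^2$ --- turns the half-ball estimate into a vortex-type slicing estimate. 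One then slices by the half-circles $\partial B_\rho(q_j)\cap\Omega$ of length $\sim\pi\rho$, over which the phase of $u_{\e_n}$ turns by the amount recorded in the boundary index $D_j$ up to a curvature-controlled error, and this halving of the angular range is exactly what produces the factor $\tfrac12$; one obtains $G_{\e_n}(u_{\e_n};B_\sigma(q_j)\cap\Omega)\ge\tfrac{\pi}{2}|D_j|\,|\ln\e_n|-C$. Summing over $i$ and $j$ gives the claim, and since the anisotropic terms were discarded at the start, the argument applies verbatim to $\Ecurn$.

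I expect the boundary estimate to be the only genuinely delicate step: one must verify that the reflection/doubling construction of \cite{abv}, carried out there for the pure Ginzburg--Landau functional with tangential anchoring, transfers to the present setting. The necessary inputs are precisely the a priori bounds established in Section~\ref{sec:bounds} --- which are exactly the pointwise hypotheses \cite{abv} uses near $\Gamma$ --- together with the boundary condition; the curl (or divergence) term causes no difficulty because it was dropped when passing to $G_{\e_n}$. The interior estimate and the splitting over disjoint balls are routine.
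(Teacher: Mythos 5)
Your proposal correctly reduces to the classical Ginzburg--Landau energy by discarding the nonnegative anisotropic term, and correctly identifies the local annular lower bounds (these are precisely Lemma~\ref{lem:locallb}, proved via \cite[Theorem 5.2]{abv}) as the basic building blocks. But there is a genuine gap in the passage from local to global: you slice $B_\sigma(p_i)$ by the circles $\partial B_\rho(p_i)$ and claim that ``on every such circle disjoint from the bad balls the winding of $u_{\e_n}/|u_{\e_n}|$ equals $d_i$.'' This is false in general. Corollary~\ref{cor:sigcover} only guarantees that the $\lambda\e_n$-bad balls are contained in $\mathcal{S}_\sigma$; it does not assert that all bad balls inside $B_\sigma(p_i)$ lie within $O(\e_n)$ of $p_i$. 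If there are several bad balls inside $B_\sigma(p_i)$ at mutual distances of intermediate order (say $\e_n^{1/2}$), then for $\rho$ below those separation scales the circle $\partial B_\rho(p_i)$ encloses only a subset of the bad balls, and its winding is the corresponding partial sum of degrees, not $d_i$. Your integration over $\rho\in(\lambda\e_n,\sigma)$ would then not produce the constant $\pi|d_i|$ in front of $|\ln\e_n|$.

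Handling this is exactly what the paper's cited ball expansion/fusion (Jerrard--Sandier) machinery is for: one grows disjoint balls starting from the $\lambda\e_n$-bad balls, merging them when they collide and tracking degrees additively, so that at each scale the annular contribution is $\pi\sum_k|d^{(k)}|\log(\cdot)\ge\pi|\sum_k d^{(k)}|\log(\cdot)$ by the triangle inequality; the combining identities \eqref{eq:badballsum}, \eqref{eq:badballsum2} from \cite[Proposition 4.1, Lemma 4.2]{abv} and the logarithmic telescoping then yield \eqref{eq:Ssigmalowerbound}. Your simple concentric slicing around each $p_i$, $q_j$ is a special case of this machinery valid only when the bad balls cluster at scale $O(\e_n)$ about $p_i$, $q_j$ --- a fact that is true in the limit but not available at this stage of the argument and requires, rather than replaces, the merging step. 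The boundary-ball part of your proposal, while correctly identifying the reflection/doubling mechanism responsible for the factor $\tfrac12$, suffers from the same omission. To repair the argument you should make the ball-growing/merging step explicit (e.g., following \cite[Lemma 5.3]{abv} or \cite[Lemma 7.1]{ABG}) rather than slice concentrically.
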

To obtain the desired lower bound for $\Edivn$ and $\Ecurn$, we begin by analyzing the local cost of a vortex. Let
\begin{equation*}
    A_{r,R}(x_0)\coloneqq\omega_R(x_0)\setminus \overline{\omega_r(x_0)},\quad 0<r<R
\end{equation*}
denote an annular region.
\begin{lem}\label{lem:locallb}
    Suppose $x_0\in\overline{\Omega}$ and assume $1/2\leq |u|\leq 1$ in $A_{r,R}(x_0)$. Additionally, suppose $u_n=0$ on $\overline{ A_{r,R}(x_0)}\cap\Gamma$ and that there is some number $K$ satisfying
    \begin{align*}
\Ediv(u)\leq K|\ln\e|+K\quad \mbox{and}\quad \frac{1}{\e^2}\int_{\omega_{\e^{\gamma}}(x_0)}\left(1-|u|^2\right)^{2}\mathrm{d}x\leq K,
    \end{align*}
    where $\e^{\gamma}$ is as in Theorem \ref{thm:eta}. Then there is a constant $C$ independent of $\e$ such that:
\begin{enumerate}[(i)]
\item If $B_R(x_0)\subset\Omega$ and $d\coloneqq\deg(u;\partial B_r(x_0))\neq 0$,
\begin{equation}\label{ineq:lower1}
\int_{A_{r,R}(x_0)}|\nabla u|^2\,\mathrm{d}x \geq 2d^2\pi \ln\biggl(\frac{R}{r}\biggr)-C.
\end{equation}
\item If $x_0\in\Gamma$ and $D\neq 0$ is the boundary index on $\partial B_r\cap\Omega$,
\begin{equation}\label{ineq:lower2}
\int_{A_{r,R}(x_0)}|\nabla u|^2\,\mathrm{d}x \geq D^2\pi\ln\biggl(\frac{R}{r}\biggr)-C
\end{equation}
\end{enumerate}
The same results hold with $\Ediv$ replaced by $\Ecur$.
\end{lem}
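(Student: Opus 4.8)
The plan is to follow the classical slicing argument for Ginzburg--Landau lower bounds on annuli, as in \cite{struwe1994asymptotic,bbh} for the interior case and in \cite{abv} for the boundary case. Since the elastic term is nonnegative, $\Ediv(u)\ge\tfrac12\int_{\Omega}|\grad u|^2$ and likewise for $\Ecur$, so we may discard it and argue only with $\int|\grad u|^2$; in particular the cases of $\Ediv$ and $\Ecur$ are identical and it suffices to treat the first. On $A_{r,R}(x_0)$ write $u=\rho w$ with $\rho=|u|\in[\tfrac12,1]$ and $w=u/|u|\in\mathbb{S}^1$, so that $|\grad u|^2=|\grad\rho|^2+\rho^2|\grad w|^2$. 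For $t\in(r,R)$ set $C_t\coloneqq\partial B_t(x_0)$ in case (i) and $C_t\coloneqq\partial B_t(x_0)\cap\Omega$ in case (ii); writing $w=e^{i\varphi}$ on $C_t$ locally we have $|\grad u|^2\ge\rho^2(\partial_\tau\varphi)^2$ there. The topological input is $\int_{C_t}\partial_\tau\varphi\,\mathrm{d}s=2\pi d$ in case (i), and $\int_{C_t}\partial_\tau\varphi\,\mathrm{d}s=\pi D+O(t)$ in case (ii) --- the latter being the statement, recalled in the introduction and made precise through the $u^2$-construction of the boundary index in \cite[Section~4]{abv}, that the phase of $u$ turns by $D\pi$ across the arc; here one uses the boundary condition $u_n=0$, so that $w=\pm\tau$ at the endpoints of $C_t$, and the $O(t)$ absorbs the turning of $\tau$ along $\Gamma$. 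Since $|C_t|=2\pi t$ in case (i) and $|C_t|=\pi t+O(t^2)$ in case (ii), Cauchy--Schwarz yields $\int_{C_t}(\partial_\tau\varphi)^2\,\mathrm{d}s\ge\tfrac{2\pi d^2}{t}$, respectively $\tfrac{\pi D^2}{t}$ up to a correction integrable in $t$; the factor $2$ discrepancy between \eqref{ineq:lower1} and \eqref{ineq:lower2} is exactly the difference between a full circle and a half-circle.

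The crude bound $\rho^2\ge\tfrac14$ would only give the non-sharp constant $\tfrac\pi2 d^2$, so the crux is to upgrade it to $\rho^2\ge 1-\delta(t)$ on $C_t$ with $\int_r^R\tfrac{\delta(t)}{t}\,\mathrm{d}t\le C$. For this we retain the term $\int_{C_t}|\partial_\tau\rho|^2\,\mathrm{d}s$ discarded above: the oscillation of $\rho$ on $C_t$ is controlled by $|C_t|^{1/2}\bigl(\int_{C_t}|\partial_\tau\rho|^2\,\mathrm{d}s\bigr)^{1/2}$, while the hypotheses $\Ediv(u)\le K|\ln\e|+K$ and $\tfrac1{\e^2}\int_{\omega_{\e^\gamma}(x_0)}(1-|u|^2)^2\le K$, together with the a priori Lipschitz estimate of Lemma~\ref{lem:a-priori-div} (resp.\ Theorem~\ref{thm:a-priori-curl}), force $\rho$ to be uniformly close to $1$ on $C_t$ for all but a controlled set of radii. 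Absorbing the cross-terms by Young's inequality, the various corrections integrate in $t$ to a single constant $C=C(k,\Omega)$, and $\int_r^R\tfrac{\mathrm{d}t}{t}=\ln\tfrac{R}{r}$ then delivers \eqref{ineq:lower1} and \eqref{ineq:lower2}. In case (ii) this step is performed in the local coordinates of Subsection~\ref{subsec:coord}, where the half-annulus straightens and the metric differs from the Euclidean one by $O(t)$ on $C_t$, again contributing $O(1)$ after integration.

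The main obstacle is precisely this error control: to reach the \emph{sharp} constants $2\pi d^2$ and $\pi D^2$ starting only from the pinning $|u|\ge\tfrac12$, one must show that $|u|$ is genuinely close to $1$ at most radii $t\in(r,R)$ --- which is where the logarithmic energy upper bound, the $\e^{-2}$-weighted potential bound near $x_0$, and the Lipschitz estimate all come in --- and that the remaining defect costs only a bounded amount of energy, uniformly in $\e$, $r$, and $R$. In the boundary case there is the additional bookkeeping of the curved geometry and of the phase of $w$ at the endpoints of the arcs $C_t$, handled through the local frame of Subsection~\ref{subsec:coord} and the $u^2$-construction of the boundary index from \cite{abv}.
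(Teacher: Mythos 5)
The paper disposes of this lemma in one line: since $\Ediv(u)\ge G_\e(u)$ and $\Ecur(u)\ge G_\e(u)$ pointwise (the elastic term is nonnegative), both parts of the statement are exactly \cite[Theorem 5.2]{abv} applied to $u$. You correctly identify this same reduction at the start --- discard the elastic term, so both energies are treated at once --- but then, instead of invoking the cited theorem, you set out to re-derive the classical annular lower bound from scratch. That is a genuinely different route: the paper buys brevity by pushing the work into the reference, while your version is more self-contained but re-proves what the authors chose to cite.

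Within that reconstruction the outline (decompose $u=\rho w$, slice on $C_t$, use the degree or boundary index to lower-bound $\int_{C_t}\partial_\tau\varphi$, and control the error from $\rho<1$) is the standard one and the topological bookkeeping you describe is sound, but the modulus-upgrade step as you sketch it does not close. The oscillation bound
$\operatorname{osc}_{C_t}\rho\lesssim|C_t|^{1/2}\bigl(\int_{C_t}|\partial_\tau\rho|^2\bigr)^{1/2}$,
combined with the global energy upper bound and a mean-value selection of $t$, only gives $\operatorname{osc}_{C_t}\rho=O(1)$, not $o(1)$, so it does not by itself force $\rho$ uniformly close to $1$ on $C_t$; and the Lipschitz estimate $\|\nabla u_\e\|_\infty\lesssim\e^{-1}$ is not the right tool here either --- a direct attempt to trade $(1-\rho^2)|\nabla\varphi|^2$ against $\|\nabla u_\e\|_\infty^2\int(1-\rho^2)$ produces errors of order $\e^{\gamma-1}\to\infty$. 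The cleaner way to make the error integrable is a weighted Cauchy--Schwarz,
\begin{equation*}
(2\pi d)^2=\Bigl(\int_{C_t}\partial_\tau\varphi\Bigr)^{2}
\le\Bigl(\int_{C_t}\rho^{-2}\Bigr)\Bigl(\int_{C_t}\rho^{2}(\partial_\tau\varphi)^{2}\Bigr),
\qquad
\int_{C_t}\rho^{-2}\le 2\pi t+4\int_{C_t}(1-\rho^{2}),
\end{equation*}
using $\rho\ge\tfrac12$, which reduces the deficit to $\int_r^R t^{-2}\int_{C_t}(1-\rho^2)\,\mathrm{d}s\,\mathrm{d}t$; this is bounded directly by the potential hypothesis (with no Lipschitz input). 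You should also note that the hypothesis $\e^{-2}\int_{\omega_{\e^\gamma}(x_0)}(1-|u|^2)^2\le K$ only controls radii $t\le\e^\gamma$, and for $t\in(\e^\gamma,R)$ the weaker global bound $\int_\Omega(1-|u|^2)^2\lesssim\e^2|\ln\e|$ coming from $\Ediv(u)\le K|\ln\e|+K$ must be used instead (it still integrates to $O(1)$ precisely because $\gamma<1$). So the plan of attack is reasonable and the reduction is the key observation, but the specific error control you propose is not complete as written.
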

\begin{proof}
Since $\Ediv$ and $\Ecur$ bound $G_{\e}$ from above, \cite[Theorem 5.2]{abv} gives conclusions (i) and (ii).
\end{proof}
The result of Lemma~\ref{lem:sigmalb} is in the spirit of \cite{sandier1998lower} and \cite{jerrard1999lower}. We follow the approach derived in \cite{sandier1998lower} in which a technique involving summing properties of the logarithm is employed. In short, the proof follows a two-step process where balls containing $S_{\e}$ are expanded and fused in such a way that the energy on these balls can be estimated from below while preserving the $|\ln \e|$ order of the local lower bound and simultaneously accounting for the sum of winding numbers about these balls. However, these techniques utilize Dirichlet boundary data, which we do not have, and so the proof must be modified.
\begin{proof}[Proof of Lemma \ref{lem:sigmalb}]
By Lemma \ref{lem:epcover}, $S_{\e}$ can be covered by a union of disjoint balls of radii of order $\e$. For a given $\e>0$, there will be $I_{\e}$ interior bad balls and $J_{\e}$ boundary bad balls. Suppose $d_{i,\e}$ denotes the degree of $u_{\e}$ on an interior bad ball and $D_{j,\e}$ denotes the boundary index of $u_{\e}$ on a boundary bad ball. Then \cite[Proposition 4.1]{abv} states 
\begin{equation}\label{eq:badballsum}
    \sum_{i=1}^{I_{\e}}d_{i,\e}+\frac{1}{2}\sum_{j=1}^{J_{\e}}D_{j,\e}=1.
\end{equation}
Moreover, by \cite[Lemma 4.2]{abv}, if $\mathcal{B}_R\cap\Omega$ is a boundary ball (of boundary index $\mathscr{D}$) enclosing $\mathcal{I}$ interior bad balls and $\mathcal{J}$ boundary bad balls, then 
\begin{equation}\label{eq:badballsum2}
2\sum_{i=1}^{\mathcal{I}}d_{i,\e}+\sum_{j=1}^{\mathcal{J}}D_{j,\e}=\mathscr{D}.
\end{equation}
Using these addition properties in conjunction with a modification of the aforementioned ball expansion/fusion argument (see \cite[Lemma 5.3]{abv} \& \cite[Lemma 7.1]{ABG}), which depends on Lemma~\ref{lem:locallb}, we obtain inequality \eqref{eq:Ssigmalowerbound} for $\Edivn$ and $\Ecurn$.
\end{proof}
\begin{proof}[Proof of Theorem \ref{thm:maindiv}]
By the upper bound \eqref{eq:Eupperbound} and lower bound \eqref{eq:Ssigmalowerbound} for $\Edivn$ and $\Ecurn$, we find that each degree $d_i$ and boundary index $D_j$ are uniformly bounded in $\e$ and therefore can be taken to be constant along a subsequence $\e_n\to 0$. It can be shown using the given energy estimates and \eqref{eq:badballsum} that
\begin{equation}\label{eq:finalsum}
\sum_{i=1}^I|d_i|+\frac{1}{2}\sum_{j=1}^J|D_j|=\sum_{i=1}^Id_i+\frac{1}{2}\sum_{j=1}^JD_j=1
\end{equation}\\
and so \eqref{eq:Ssigmalowerbound} is seen as the matching lower bound: 
\begin{equation}\label{eq:lastlowerbound}
    \Edivn(u_{\e_{n}};\mathcal{S}_{\sigma}),\ \Ecurn(u_{\e_{n}};\mathcal{S}_{\sigma})\geq \pi|\ln\e_{n}|-C'.
\end{equation}
Looking outside of $\mathcal{S}_{\sigma}$ on the punctured domain
\begin{equation}\label{def:Omegasigma}
\Omega_{\sigma}\coloneqq\Omega\setminus\overline{\mathcal{S}_{\sigma}}
\end{equation} 
the bounds \eqref{eq:Eupperbound} and \eqref{eq:lastlowerbound} can be combined (see \cite[Corollary 5.4]{abv}) to give the existence of a constant $C$ independent of $\e$ and $\sigma$ such that
\begin{equation}\label{eq:upperboundOmegasigma}
\Edivn(u_{\e};\Omega_{\sigma}),\ \Ecurn(u_{\e};\Omega_{\sigma})\leq \pi |\ln\sigma|+C.
\end{equation}
It is clear from \eqref{eq:Ssigmalowerbound} that all $\sigma$-balls constituting $\mathcal{S}_{\sigma}$ which satisfy $d_i=D_j=0$ do not contribute significant energy. Therefore, the associated balls can be seen to belong to the set where $u_{\e_n}$ converges. By relabeling the approximate vortices if necessary, we define
\begin{equation*}
\Sigma \coloneqq\{p_1,\ldots,p_I\}\cup\{q_1,\ldots,q_J\}
\end{equation*}
to be the collection of all $\sigma$-ball centers with nonzero degree or boundary index.\\

Using a refinement in the lower bound on annular regions outside $\mathcal{S}_{\sigma}$, it can also be shown that each degree and boundary index must be equal to unity. Thus, the summing condition \eqref{eq:finalsum} forces either $\Sigma=\{p\}\subset\Omega$ or $\Sigma=\{q_1,q_2\}\subset\Gamma$.\\

Upon taking an appropriate subsequence $\sigma_n\to 0$, \eqref{eq:upperboundOmegasigma} in combination of methods found in \cite{bbh,struwe1994asymptotic} allows us to conclude that $u_{\e_n}\w u_0$ weakly in $H^1_{loc}(\overline{\Omega}\setminus\Sigma;\mathbb{R}^2)$ as $\e_n\to 0$ where $u_0\in H^1(\overline{\Omega}\setminus\Sigma;\mathbb{S}^1)$.\\

The last step is to show $\Sigma=\{p\}\subset\Omega$ in the case of divergence penalization.\\

Suppose in order to derive a contradiction that there exists a boundary vortex at $x_0\in\partial\Omega$.
Without loss of generality, we assume that $x_0=0$.
Recall that for $r_2>r_1>0$ we use the notation $A_{r_1,r_2}=(B_{r_2}\setminus \overline{B_{r_1}})\cap\Omega$.

Following \cite[Section 4]{abv}, we write $u_\varepsilon(r,\theta)=|u_\varepsilon(r,\theta)|e^{i(\theta+\psi_\varepsilon(r,\theta))}$ on $A_{r_1,r_2}$, i.e.
\begin{align*}
u_\varepsilon(r,\theta)
\ &= \
|u_\varepsilon(r,\theta)|
\begin{pmatrix}
\cos(\theta + \psi_\varepsilon(r,\theta)) \\ 
\sin(\theta + \psi_\varepsilon(r,\theta))
\end{pmatrix}
\, ,
\end{align*}
where $\psi_\varepsilon$ is an $H^1-$function satisfying $\psi_\varepsilon\in \pi\mathbb{Z}$ on $\overline{A_{r_1,r_2}}\cap\partial\Omega$.
We find that
\begin{align*}
|\nabla u_\varepsilon|^2
\ &= \
|\nabla|u_\varepsilon||^2
+
|u_\varepsilon|^2 |\nabla(\theta+\psi_\varepsilon)|^2
\, ,
\end{align*}
which implies, using $|\nabla\theta|=\frac{1}{r}$ and the boundary condition for $\psi_\varepsilon$ that
\begin{align}\label{bdryvtx:grad_decomp}
\int_{A_{r_1,r_2}}|\nabla u_\varepsilon|^2 \dx x
\ &= \
\int_{A_{r_1,r_2}} \Big(|\nabla|u_\varepsilon||^2
+
|u_\varepsilon|^2 |\nabla(\theta+\psi_\varepsilon)|^2\Big) \dx x \nonumber \\
\ &= \
\int_{A_{r_1,r_2}} \Big(|\nabla|u_\varepsilon||^2
+
|u_\varepsilon|^2 |\nabla\theta|^2 
+
|u_\varepsilon|^2 |\nabla\psi_\varepsilon|^2
+
2\frac{|u_\varepsilon|^2}{r}\ee_\theta\cdot\nabla\psi_\varepsilon
\Big) \dx x  \\
\ &= \
\int_{A_{r_1,r_2}} \Big(|\nabla|u_\varepsilon||^2
+
\frac{|u_\varepsilon|^2}{r^2} 
+
|u_\varepsilon|^2 |\nabla\psi_\varepsilon|^2\Big) \dx x 
+
\mathcal{O}(|r_2-r_1|) \nonumber
\, .
\end{align}
Furthermore,
\begin{align}\label{bdryvtx:div_decomp}
|\mathrm{div}(u_\varepsilon)|^2
\ &= \
\Big(\frac{u_\varepsilon}{|u_\varepsilon|}\cdot\nabla|u_\varepsilon|\Big)^2
+
\Big(u_\varepsilon^\perp\cdot\nabla(\theta+\psi_\varepsilon)\Big)^2
+
2\Big(\frac{u_\varepsilon}{|u_\varepsilon|}\cdot\nabla|u_\varepsilon|\Big)\Big(u_\varepsilon^\perp\cdot\nabla(\theta+\psi_\varepsilon)\Big)
\, .
\end{align}

Let $r_\varepsilon>0$ such that $r_\varepsilon\to 0$, $|\ln(r_\varepsilon)|/|\ln(\varepsilon)|\to 0$.
Let $\theta_\varepsilon(r),\Theta_\varepsilon(r)$ be the minimal and maximal angle parametrizing $\overline{A_{\varepsilon,r_\varepsilon}}\cap\partial\Omega$ in polar coordinates.
Then the regularity of the boundary implies that $\Theta_\varepsilon-\theta_\varepsilon\to \pi$ uniformly in $r$ as $\varepsilon\to 0$.
We find, using $|\nabla\theta|=\frac{1}{r}$, that 
\begin{align*}
\int_{A_{\varepsilon,r_\varepsilon}} |u_\varepsilon|^2 |\nabla\theta|^2 \dx x
\ &= \
\int_{\varepsilon}^{r_\varepsilon} \int_{\theta_\varepsilon} ^{\Theta_\varepsilon} (|u_\varepsilon|^2 - 1)\frac{1}{r^2}r \dx\theta \dx r
+
\int_{\varepsilon}^{r_\varepsilon}\int_{\theta_\varepsilon} ^{\Theta_\varepsilon} \frac{1}{r^2}r \dx\theta \dx r
\, .
\end{align*}
The second term gives
\begin{align}\label{bdryvtx:grad_theta_pi_ln}
\int_{\varepsilon}^{r_\varepsilon}\int_{\theta_\varepsilon} ^{\Theta_\varepsilon} \frac{1}{r^2}r \dx r\dx\theta 
\ &= \
\int_{\varepsilon}^{r_\varepsilon} \frac{\Theta_\varepsilon(r)-\theta_\varepsilon(r)}{r} \dx r 
\ = \
\pi |\ln(\varepsilon)|
+
o(|\ln(\varepsilon)|)
\, ,
\end{align}
where we used that $\Theta_\varepsilon-\theta_\varepsilon\to\pi$ uniformly and $|\ln(r_\varepsilon)|/|\ln(\varepsilon)|\to 0$.
For the first term, using Cauchy-Schwarz inequality and the uniform bound on the potential \eqref{eq:uniform_bound_L4}, we find that
\begin{align*}
\bigg|
\int_{\varepsilon}^{r_\varepsilon}\int_{\theta_\varepsilon} ^{\Theta_\varepsilon} (|u_\varepsilon|^2 - 1)\frac{1}{r} \dx\theta\dx r
\bigg|
\ &\leq \
\int_{\varepsilon}^{r_\varepsilon}\int_{\theta_\varepsilon} ^{\Theta_\varepsilon} ||u_\varepsilon|^2 - 1|\frac{1}{\sqrt{r}}\, \frac{1}{\sqrt{r}} \dx\theta\dx r \\
\ &\leq \
\bigg(\int_{\varepsilon}^{r_\varepsilon}\int_{\theta_\varepsilon} ^{\Theta_\varepsilon} ||u_\varepsilon|^2 - 1|^2\frac{1}{\varepsilon^2} r\dx\theta\dx r\bigg)^\frac12
\bigg(\int_\varepsilon^{r_\varepsilon}\frac{\Theta_\varepsilon(r)-\theta_\varepsilon(r)}{r}\dx r\bigg)^\frac12 \\
\ &= \
o(|\ln(\varepsilon)|)
\, .
\end{align*}
Since $\int_\Omega |\nabla u_\varepsilon|^2 + |\mathrm{div}(u_\varepsilon)|^2 + \frac{1}{4\varepsilon^2}(1-|u_\varepsilon|^2)^2 \dx x\leq \pi|\ln(\varepsilon)| + C$ and by \eqref{bdryvtx:grad_decomp} and \eqref{bdryvtx:grad_theta_pi_ln} it holds
\begin{align}\label{bdryvtx:bound_o_ln}
\int_{A_{\varepsilon,r_\varepsilon}} 
\Big(
|\nabla|u_\varepsilon||^2
+
|u_\varepsilon|^2|\nabla\psi_\varepsilon|^2
+
|\mathrm{div}(u_\varepsilon)|^2
+ 
\frac{1}{4\varepsilon^2}(1-|u_\varepsilon|^2)^2
\Big)
\dx x
\ &= \
o(|\ln(\varepsilon)|)
\, .
\end{align}
Since all terms in the above are non-negative, this means in particular that $\int_{A_{\varepsilon,r_\varepsilon}}  |\mathrm{div}(u_\varepsilon)|^2\dx x = o(|\ln(\varepsilon)|)$, or using \eqref{bdryvtx:div_decomp},
\begin{align*}
\int_{A_{\varepsilon,r_\varepsilon}} 
\bigg(
\Big(\frac{u_\varepsilon}{|u_\varepsilon|}\cdot\nabla|u_\varepsilon|\Big)^2
+
\Big(u_\varepsilon^\perp\cdot\nabla(\theta+\psi_\varepsilon)\Big)^2
+
2\Big(\frac{u_\varepsilon}{|u_\varepsilon|}\cdot\nabla|u_\varepsilon|\Big)\Big(u_\varepsilon^\perp\cdot\nabla(\theta+\psi_\varepsilon)\Big)
\bigg)
\dx x
\ &= \
o(|\ln(\varepsilon)|)
\, .
\end{align*}
But \eqref{bdryvtx:bound_o_ln} implies that
\begin{align*}
\int_{A_{\varepsilon,r_\varepsilon}} 
\Big(\frac{u_\varepsilon}{|u_\varepsilon|}\cdot\nabla|u_\varepsilon|\Big)^2
\dx x
\ &= \
o(|\ln(\varepsilon)|)
\end{align*}
and by Cauchy-Schwarz, \eqref{bdryvtx:bound_o_ln} and the global energy bound
\begin{align*}
\bigg|\int_{A_{\varepsilon,r_\varepsilon}} 
\Big(\frac{u_\varepsilon}{|u_\varepsilon|}\cdot\nabla|u_\varepsilon|\Big)\Big(u_\varepsilon^\perp\cdot\nabla(\theta+\psi_\varepsilon)\Big)
\dx x
\bigg|
\ &\leq \
\int_{A_{\varepsilon,r_\varepsilon}} 
|\nabla|u_\varepsilon||\, |u_\varepsilon||\nabla(\theta+\psi_\varepsilon)|
\dx x \\
\ &\leq \
\bigg(\int_{A_{\varepsilon,r_\varepsilon}} 
|\nabla|u_\varepsilon||^2
\dx x \bigg)^\frac12
\bigg(\int_{A_{\varepsilon,r_\varepsilon}} 
|u_\varepsilon|^2|\nabla(\theta+\psi)|^2
\dx x \bigg)^\frac12 \\
\ &\leq \
o(\sqrt{|\ln(\varepsilon)|}) \sqrt{|\ln(\varepsilon)|}
\ = \
o(|\ln(\varepsilon)|)
\, .
\end{align*}
We compute
\begin{align*}
u_\varepsilon^\perp\cdot\nabla\theta
\ &= \
|u_\varepsilon|
\begin{pmatrix}
-\sin(\theta + \psi_\varepsilon) \\ 
\cos(\theta + \psi_\varepsilon)
\end{pmatrix}
\cdot
\frac{1}{r}
\begin{pmatrix}
-\sin(\theta) \\ \cos(\theta)
\end{pmatrix}
\ = \
\frac{|u_\varepsilon|}{r}\cos(\psi_\varepsilon)
\, .
\end{align*}
Thus we infer that
\begin{align*}
o(|\ln(\varepsilon)|)
\ &= \
\int_{A_{\varepsilon,r_\varepsilon}} 
\Big(u_\varepsilon^\perp\cdot\nabla(\theta+\psi_\varepsilon)\Big)^2
\dx x \\
\ &= \
\int_{A_{\varepsilon,r_\varepsilon}} 
\bigg(
\frac{|u_\varepsilon|^2}{r^2}\cos^2(\psi_\varepsilon)
+ |u_\varepsilon^\perp\cdot\nabla\psi_\varepsilon|^2
+ 2\frac{|u_\varepsilon|}{r}\cos(\psi_\varepsilon) \, (u_\varepsilon^\perp\cdot\nabla\psi_\varepsilon)
\bigg)
\dx x
\, .
\end{align*}
For the second and third term we observe that
\begin{align*}
\int_{A_{\varepsilon,r_\varepsilon}} 
|u_\varepsilon^\perp\cdot\nabla\psi_\varepsilon|^2
\dx x
\ &\leq \
\int_{A_{\varepsilon,r_\varepsilon}} 
|u_\varepsilon|^2|\nabla\psi_\varepsilon|^2
\dx x
\ = \
o(|\ln(\varepsilon)|)
\, , \\
\bigg|\int_{A_{\varepsilon,r_\varepsilon}} 
\frac{|u_\varepsilon|}{r}\cos(\psi_\varepsilon) \, (u_\varepsilon^\perp\cdot\nabla\psi_\varepsilon)
\dx x\bigg|
\ &\leq \
\bigg(\int_{A_{\varepsilon,r_\varepsilon}} 
\frac{|u_\varepsilon|^2}{r^2}
\dx x\bigg)^\frac12
\bigg(\int_{A_{\varepsilon,r_\varepsilon}} 
|u_\varepsilon|^2|\nabla\psi_\varepsilon|^2
\dx x\bigg)^\frac12
\ = \
o(|\ln(\varepsilon)|)
\, .
\end{align*}
Thus also the first term must be $o(|\ln(\varepsilon)|)$.
Hence we end up with
\begin{align*}
o(|\ln(\varepsilon)|)
\ &= \
\int_{A_{\varepsilon,r_\varepsilon}} 
\Big(
|\nabla|u_\varepsilon||^2
+
|u_\varepsilon|^2|\nabla\psi_\varepsilon|^2
+
\frac{|u_\varepsilon|^2}{r^2}\cos^2(\psi_\varepsilon)
+ 
\frac{1}{4\varepsilon^2}(1-|u_\varepsilon|^2)^2
\Big)
\dx x \\
\ &\geq \
\int_{A_{\varepsilon,r_\varepsilon}} 
\Big(
\frac{1}{r^2}|\partial_\theta|u_\varepsilon||^2
+
\frac{|u_\varepsilon|^2}{r^2}|\partial_\theta\psi_\varepsilon|^2
+
\frac{|u_\varepsilon|^2}{r^2}\cos^2(\psi_\varepsilon)
+ 
\frac{1}{4\varepsilon^2}(1-|u_\varepsilon|^2)^2
\Big)
\dx x \\
\ &= \
\int_\varepsilon^{r_\varepsilon}
\int_{\mathbb{S}^1\cap\frac{1}{r}\Omega}
\Big(
\frac{1}{r}|\partial_\theta|u_\varepsilon||^2
+
\frac{|u_\varepsilon|^2}{r}|\partial_\theta\psi_\varepsilon|^2
+
\frac{|u_\varepsilon|^2}{r}\cos^2(\psi_\varepsilon)
+ 
\frac{r}{4\varepsilon^2}(1-|u_\varepsilon|^2)^2
\Big)
\dx \mathcal{H}^1 \dx r
\, .
\end{align*}
Because of the $o(|\ln(\varepsilon)|)$ bound, there exists a set $S_\varepsilon\subset (\varepsilon,r_\varepsilon)$ with $|S_\varepsilon|=\frac12|r_\varepsilon-\varepsilon|$ such that
\begin{align*}
\frac{r}{\varepsilon^2}\int_{\mathbb{S}^1\cap\frac1r\Omega}  (1-|u_\varepsilon|^2)^2 \dx\mathcal{H}^1
\ &\leq \
\frac{o(|\ln(\varepsilon)|)}{|S_\varepsilon|}
\qquad
\text{for all }
r\in S_\varepsilon
\, .
\end{align*}
This allows us to conclude that for any radius in $S_\varepsilon$, the modulus of $u_\varepsilon$ must converge to $1$ in $L^4$.
Using this and the fact that all integrands are non-negative, there must exist a radius $s_\varepsilon\in S_\varepsilon$ such that
\begin{align*}
\int_{\mathbb{S}^1\cap \frac{1}{s_\varepsilon}\Omega} 
\Big(
|\partial_\theta|u_\varepsilon||^2
+
|u_\varepsilon|^2|\partial_\theta\psi_\varepsilon|^2
+
|u_\varepsilon|^2\cos^2(\psi_\varepsilon)
\Big)
\dx \mathcal{H}^1
\ \to \
0
\text{ as }
\varepsilon\to 0
\, .
\end{align*}
Using the first term and passing to a subsequence if necessary, we find that $|u_\varepsilon|\to 1$ uniformly on $\mathbb{S}^1\cap \frac{1}{s_\varepsilon}\Omega$.
Thus, using the second and third term, we deduce that $\psi_\varepsilon$ converges to a constant $\psi_0$ and since $\cos^2(\psi_0)=0$ we conclude that $\psi_0\in \frac{\pi}{2} + \pi\mathbb{Z}$.
However, such $\psi_0$ violates the tangential boundary condition, which forces $\psi_0\in\pi\mathbb{Z}$. This concludes the proof of Theorem \ref{thm:maindiv}.
\end{proof}

\begin{rem}
Note that we could do the same computation with slight modifications for curl instead of divergence without reaching a contradiction:
Since $|\mathrm{curl}(u_\varepsilon)|=|\mathrm{div}(u_\varepsilon^\perp)|$ in 2D, we would find at the end that $\sin^2(\psi_\varepsilon)$ must be converging zero, thus giving $\psi_0=\pi\mathbb{Z}$.
These values are compatible with the boundary condition.
\end{rem}

\appendix
\appendixpage
\addappheadtotoc

\section{Rigorous Calculations}\label{app:calculations}
In this Appendix we provide additional details of the rigorous calculation
provided in Subsection \ref{subsec:glueing}.
More specifically, we provide, for the interested reader, an extended
discussion of the calculations required to verify glueing the PDEs.\\

\noindent{}{\it Lemma \ref{lem:pdeext} calculations}\\[3pt]

Inserting the identities discussed before the statement of the lemma
in equation \eqref{eq:ELcurlweak} we obtain
\begin{align*}
    &\int_{\mathcal{U}_{j}}\!{}
    \Bigl[\nabla\widetilde{\psi}_{j}^{-1}(x)^{T}
    \nabla{}(\widetilde{u}_{\e})_{\tau}\bigl(\widetilde{\psi}_{j}^{-1}(x)\bigr)\Bigr]\cdot
    \Bigl[\nabla\widetilde{\psi}_{j}^{-1}(x)^{T}
    \nabla{}\widetilde{v}_{\tau}\bigl(\widetilde{\psi}_{j}^{-1}(x)\bigr)\Bigr]\\
    &+\int_{\mathcal{U}_{j}}\!{}
    \Bigl[\nabla\widetilde{\psi}_{j}^{-1}(x)^{T}
    \nabla{}(\widetilde{u}_{\e})_{n}\bigl(\widetilde{\psi}_{j}^{-1}(x)\bigr)\Bigr]\cdot
    \Bigl[\nabla\widetilde{\psi}_{j}^{-1}(x)^{T}
    \nabla{}\widetilde{v}_{n}\bigl(\widetilde{\psi}_{j}^{-1}(x)\bigr)\Bigr]\\
    &-\int_{\mathcal{U}_{j}}\!{}
    \Bigl[\nabla\widetilde{\psi}_{j}^{-1}(x)^{T}
    \nabla{}(\widetilde{u}_{\e})_{\tau}\bigl(\widetilde{\psi}_{j}^{-1}(x)\bigr)\Bigr]\cdot
    \biggl[\frac{\kappa\bigl((\widetilde{\psi}_{j}^{-1})^{1}(x)\bigr)
    \widetilde{v}_{n}(\widetilde{\psi}_{j}^{-1}(x))}
    {1-(\widetilde{\psi}_{j}^{-1})^{2}(x)\kappa\bigl((\widetilde{\psi}_{j}^{-1})^{1}(x)\bigr)}
    \tau\bigl(\widetilde{\psi}_{j}^{-1}(x)\bigr)\biggr]\\
    &-\int_{\mathcal{U}_{j}}\!{}
    \Bigl[\nabla\widetilde{\psi}_{j}^{-1}(x)^{T}
    \nabla{}(\widetilde{u}_{\e})_{n}\bigl(\widetilde{\psi}_{j}^{-1}(x)\bigr)\Bigr]\cdot
    \biggl[\frac{\kappa\bigl((\widetilde{\psi}_{j}^{-1})^{1}(x)\bigr)
    \widetilde{v}_{\tau}\bigl(\widetilde{\psi}_{j}^{-1}(x)\bigr)}
    {1-(\widetilde{\psi}_{j}^{-1})^{2}(x)\kappa\bigl((\widetilde{\psi}_{j}^{-1})^{1}(x)\bigr)}
    n\bigl(\widetilde{\psi}_{j}^{-1}(x)\bigr)\biggr]\\
    &+k\int_{\mathcal{U}_{j}}
    \Bigl[
    \partial_{y_{2}}(\widetilde{u}_{\e})_{\tau}\bigl(\widetilde{\psi}_{j}^{-1}(x)\bigr)
    \partial_{n}(\widetilde{\psi}_{j}^{-1})^{2}(x)
    -\partial_{y_{1}}(\widetilde{u}_{\e})_{n}\bigl(\widetilde{\psi}_{j}^{-1}(x)\bigr)
    \partial_{\tau}(\widetilde{\psi}_{j}^{-1})^{1}(x)
    \Bigr]
    \Bigl[\partial_{y_{2}}\widetilde{v}_{\tau}\bigl(\widetilde{\psi}_{j}^{-1}(x)\bigr)
    \partial_{n}(\widetilde{\psi}_{j}^{-1})^{2}(x)\Bigr]\\
    &-k\int_{\mathcal{U}_{j}}
    \Bigl[
    \partial_{y_{2}}(\widetilde{u}_{\e})_{\tau}\bigl(\widetilde{\psi}_{j}^{-1}(x)\bigr)
    \partial_{n}(\widetilde{\psi}_{j}^{-1})^{2}(x)
    -\partial_{y_{1}}(\widetilde{u}_{\e})_{n}\bigl(\widetilde{\psi}_{j}^{-1}(x)\bigr)
    \partial_{\tau}(\widetilde{\psi}_{j}^{-1})^{1}(x)
    \Bigr]
    \Bigl[\partial_{y_{1}}\widetilde{v}_{n}\bigl(\widetilde{\psi}_{j}^{-1}(x)\bigr)
    \partial_{\tau}(\widetilde{\psi}_{j}^{-1})^{1}(x)\Bigr]\\
    &+k\int_{\mathcal{U}_{j}}\!{}
    \biggl[\frac{\kappa\bigl((\widetilde{\psi}_{j}^{-1})^{1}(x)\bigr)(\widetilde{u}_{\e})_{\tau}\bigl(\widetilde{\psi}_{j}^{-1}(x)\bigr)}{1-(\widetilde{\psi}_{j}^{-1})^{2}(x)\kappa\bigl((\widetilde{\psi}_{j}^{-1})^{1}(x)\bigr)}\biggr]
    \Bigl[\partial_{y_{2}}\widetilde{v}_{\tau}\bigl(\widetilde{\psi}_{j}^{-1}(x)\bigr)\partial_{n}(\widetilde{\psi}_{j}^{-1})^{2}(x)\Bigr]\\
    &-k\int_{\mathcal{U}_{j}}\!{}
    \biggl[\frac{\kappa\bigl((\widetilde{\psi}_{j}^{-1})^{1}(x)\bigr)(\widetilde{u}_{\e})_{\tau}\bigl(\widetilde{\psi}_{j}^{-1}(x)\bigr)}{1-(\widetilde{\psi}_{j}^{-1})^{2}(x)\kappa\bigl((\widetilde{\psi}_{j}^{-1})^{1}(x)\bigr)}\biggr]
    \Bigl[\nabla[\widetilde{v}_{n}\bigl(\widetilde{\psi}_{j}^{-1}(x)\bigr)]\cdot{}\tau\bigl((\widetilde{\psi}_{j}^{-1})^{1}(x)\bigr)\Bigr]\\
    &+k\int_{\mathcal{U}_{j}}\!{}
    \biggl[\frac{\kappa\bigl((\widetilde{\psi}_{j}^{-1})^{1}(x)\bigr)(\widetilde{u}_{\e})_{\tau}\bigl(\widetilde{\psi}_{j}^{-1}(x)\bigr)}{1-(\widetilde{\psi}_{j}^{-1})^{2}(x)\kappa\bigl((\widetilde{\psi}_{j}^{-1})^{1}(x)\bigr)}\biggr]
    \biggl[\frac{\kappa\bigl((\widetilde{\psi}_{j}^{-1})^{1}(x)\bigr)\widetilde{v}_{\tau}\bigl(\widetilde{\psi}_{j}^{-1}(x)\bigr)}{1-(\widetilde{\psi}_{j}^{-1})^{2}(x)\kappa\bigl((\widetilde{\psi}_{j}^{-1})^{1}(x)\bigr)}\biggr]\\
    &+k\int_{\mathcal{U}_{j}}\!{}
    \Bigl[
    \partial_{y_{2}}(\widetilde{u}_{\e})_{\tau}\bigl(\widetilde{\psi}_{j}^{-1}(x)\bigr)
    \partial_{n}(\widetilde{\psi}_{j}^{-1})^{2}(x)
    -\partial_{y_{1}}(\widetilde{u}_{\e})_{n}\bigl(\widetilde{\psi}_{j}^{-1}(x)\bigr)
    \partial_{\tau}(\widetilde{\psi}_{j}^{-1})^{1}(x)
    \Bigr]\biggl[\frac{\kappa\bigl((\widetilde{\psi}_{j}^{-1})^{1}(x)\bigr)\widetilde{v}_{\tau}\bigl(\widetilde{\psi}_{j}^{-1}(x)\bigr)}{1-(\widetilde{\psi}_{j}^{-1})^{2}(x)\kappa\bigl((\widetilde{\psi}_{j}^{-1})^{1}(x)\bigr)}\biggr]
    \\
    =&\int_{\mathcal{U}_{j}}\!{}
    \frac{u_{\e}\cdot{}v}{\varepsilon^{2}}(1-|u_{\e}|^{2})
\end{align*}
for $v\in{}H_{T}^{1}(\Omega;\mathbb{R}^{2})$.
Integrating by parts and using that $\widetilde{v}_{n}=0$ on
$\Gamma=\partial\Omega$ we find that
\begin{align*}
    -k\int_{\mathcal{U}_{j}}\!{}
    \biggl[\frac{\kappa\bigl((\widetilde{\psi}_{j}^{-1})^{1}(x)\bigr)(\widetilde{u}_{\e})_{\tau}\bigl(\widetilde{\psi}_{j}^{-1}(x)\bigr)}{1-(\widetilde{\psi}_{j}^{-1})^{2}(x)\kappa\bigl((\widetilde{\psi}_{j}^{-1})^{1}(x)\bigr)}\biggr]
    \Bigl[\nabla[\widetilde{v}_{n}\bigl(\widetilde{\psi}_{j}^{-1}(x)\bigr)]\cdot{}\tau\bigl((\widetilde{\psi}_{j}^{-1})^{1}(x)\bigr)\Bigr]\\
    =k\int_{\mathcal{U}_{j}}\!{}
    \Div\biggl(\frac{\kappa\bigl((\widetilde{\psi}_{j}^{-1})^{1}(x)\bigr)(\widetilde{u}_{\e})_{\tau}\bigl(\widetilde{\psi}_{j}^{-1}(x)\bigr)}{1-(\widetilde{\psi}_{j}^{-1})^{2}(x)\kappa\bigl((\widetilde{\psi}_{j}^{-1})^{1}(x)\bigr)}
    \tau\bigl((\widetilde{\psi}_{j}^{-1})^{1}(x)\bigr)\biggr)
    \widetilde{v}_{n}\bigl(\widetilde{\psi}_{j}^{-1}(x)\bigr).
\end{align*}
Combining these calculations gives that for $j=1,2,\ldots,N$
we have
\begin{align*}
    &\int_{\mathcal{U}_{j}}\!{}
    \biggl[
    \frac{\partial_{y_{1}}(\widetilde{u}_{\e})_{\tau}\bigl(\widetilde{\psi}_{j}^{-1}(x)\bigr)
    \partial_{y_{1}}\widetilde{v}_{\tau}\bigl(\widetilde{\psi}_{j}^{-1}(x)\bigr)}
    {\bigl(1-(\widetilde{\psi}_{j}^{-1})^{2}(x)\kappa\bigl((\widetilde{\psi}_{j}^{-1})^{1}(x)\bigr)\bigr)^{2}}
    +
    \partial_{y_{2}}(\widetilde{u}_{\e})_{\tau}\bigl(\widetilde{\psi}_{j}^{-1}(x)\bigr)
    \partial_{y_{2}}\widetilde{v}_{\tau}\bigl(\widetilde{\psi}_{j}^{-1}(x)\bigr)
    \biggr]
    \\
    &+k\int_{\mathcal{U}_{j}}
    \biggl[
    \partial_{y_{2}}(\widetilde{u}_{\e})_{\tau}\bigl(\widetilde{\psi}_{j}^{-1}(x)\bigr)
    +\frac{\partial_{y_{1}}(\widetilde{u}_{\e})_{n}\bigl(\widetilde{\psi}_{j}^{-1}(x)\bigr)}{1-(\widetilde{\psi}_{j}^{-1})^{2}(x)\kappa\bigl((\widetilde{\psi}_{j}^{-1})^{1}(x)\bigr)}
    \biggr]
    \Bigl[\partial_{y_{2}}\widetilde{v}_{\tau}\bigl(\widetilde{\psi}_{j}^{-1}(x)\bigr)\Bigr]
    \\
    &+k\int_{\mathcal{U}_{j}}\!{}
    \biggl[\frac{\kappa\bigl((\widetilde{\psi}_{j}^{-1})^{1}(x)\bigr)(\widetilde{u}_{\e})_{\tau}\bigl(\widetilde{\psi}_{j}^{-1}(x)\bigr)}{1-(\widetilde{\psi}_{j}^{-1})^{2}(x)\kappa\bigl((\widetilde{\psi}_{j}^{-1})^{1}(x)\bigr)}\biggr]
    \Bigl[\partial_{y_{2}}\widetilde{v}_{\tau}\bigl(\widetilde{\psi}_{j}^{-1}(x)\bigr)\Bigr]
    \\
    =&\int_{\mathcal{U}_{j}}\!{}
    \frac{(\widetilde{u}_{\e})_{\tau}\bigl(\widetilde{\psi}_{j}^{-1}(x)\bigr)
    \widetilde{v}_{\tau}\bigl(\widetilde{\psi}_{j}^{-1}(x)\bigr)}
    {\varepsilon^{2}}(1-|u_{\e}|^{2})
    +\int_{\mathcal{U}_{j}}\!{}F_{j,\tau}\bigl(x,u_{\e}(x),\nabla{}u_{\e}(x)\bigr)
    \widetilde{v}_{\tau}\bigl(\widetilde{\psi}_{j}^{-1}(x)\bigr)
\end{align*}
and
\begin{align*}
    &\int_{\mathcal{U}_{j}}\!{}
   \biggl[
    \frac{\partial_{y_{1}}(\widetilde{u}_{\e})_{n}\bigl(\psi_{j}^{-1}(x)\bigr)
    \partial_{y_{1}}\widetilde{v}_{n}\bigl(\psi_{j}^{-1}(x)\bigr)}
    {\bigl(1-(\psi_{j}^{-1})^{2}(x)\kappa\bigl((\psi_{j}^{-1})^{1}(x)\bigr)\bigr)^{2}}
    +
    \partial_{y_{2}}(\widetilde{u}_{\e})_{n}\bigl(\psi_{j}^{-1}(x)\bigr)
    \partial_{y_{2}}\widetilde{v}_{n}\bigl(\psi_{j}^{-1}(x)\bigr)
    \biggr]\\
    &+k\int_{\mathcal{U}_{j}}
    \biggl[
    \partial_{y_{2}}(\widetilde{u}_{\e})_{\tau}\bigl(\widetilde{\psi}_{j}^{-1}(x)\bigr)
    +\frac{\partial_{y_{1}}(\widetilde{u}_{\e})_{n}\bigl(\widetilde{\psi}_{j}^{-1}(x)\bigr)}{1-(\widetilde{\psi}_{j}^{-1})^{2}(x)\kappa\bigl((\widetilde{\psi}_{j}^{-1})^{1}(x)\bigr)}
    \biggr]
    \biggl[\frac{\partial_{y_{1}}\widetilde{v}_{n}\bigl(\widetilde{\psi}_{j}^{-1}(x)\bigr)}{1-(\widetilde{\psi}_{j}^{-1})^{2}(x)\kappa\bigl((\widetilde{\psi}_{j}^{-1})^{1}(x)\bigr)}
    \biggr]\\
    =&\int_{\mathcal{U}_{j}}\!{}
    \frac{(\widetilde{u}_{\e})_{n}\bigl(\psi_{j}^{-1}(x)\bigr)
    \widetilde{v}_{n}\bigl(\psi_{j}^{-1}(x)\bigr)}
    {\varepsilon^{2}}(1-|u_{\e}|^{2})
    +\int_{\mathcal{U}_{j}}\!{}F_{j,n}\bigl(x,u_{\e}(x),\nabla{}u_{\e}(x)\bigr)
    \widetilde{v}_{n}\bigl(\psi_{j}^{-1}(x)\bigr)
    \, ,
\end{align*}
where $F_{j,\tau}$ and $F_{j,n}$ are determined by the remaining integrands
from the previous computations.
Specifically, we define $F_{j,\tau}$ and $F_{j,n}$ implicitly by
\begin{align}
    F_{j,\tau}&\bigl(x,u_{\e}(x),\nabla{}u_{\e}(x)\bigr) \nonumber \\
    &\coloneqq
    -\Bigl[\nabla\widetilde{\psi}_{j}^{-1}(x)^{T}
    \nabla{}(\widetilde{u}_{\e})_{n}\bigl(\widetilde{\psi}_{j}^{-1}(x)\bigr)\Bigr]\cdot
    \biggl[\frac{\kappa\bigl((\widetilde{\psi}_{j}^{-1})^{1}(x)\bigr)}
    {1-(\widetilde{\psi}_{j}^{-1})^{2}(x)\kappa\bigl((\widetilde{\psi}_{j}^{-1})^{1}(x)\bigr)}
    n\bigl(\widetilde{\psi}_{j}^{-1}(x)\bigr)\biggr]\nonumber \\
    &\quad+k
    \biggl[\frac{\kappa\bigl((\widetilde{\psi}_{j}^{-1})^{1}(x)\bigr)(\widetilde{u}_{\e})_{\tau}\bigl(\widetilde{\psi}_{j}^{-1}(x)\bigr)}{1-(\widetilde{\psi}_{j}^{-1})^{2}(x)\kappa\bigl((\widetilde{\psi}_{j}^{-1})^{1}(x)\bigr)}\biggr]
    \biggl[\frac{\kappa\bigl((\widetilde{\psi}_{j}^{-1})^{1}(x)\bigr)}{1-(\widetilde{\psi}_{j}^{-1})^{2}(x)\kappa\bigl((\widetilde{\psi}_{j}^{-1})^{1}(x)\bigr)}\biggr]\label{app:def-Fjt}\\
    &\quad+k\Bigl[
    \partial_{y_{2}}(\widetilde{u}_{\e})_{\tau}\bigl(\widetilde{\psi}_{j}^{-1}(x)\bigr)
    \partial_{n}(\widetilde{\psi}_{j}^{-1})^{2}(x)
    -\partial_{y_{1}}(\widetilde{u}_{\e})_{n}\bigl(\widetilde{\psi}_{j}^{-1}(x)\bigr)
    \partial_{\tau}(\widetilde{\psi}_{j}^{-1})^{1}(x)
    \Bigr]\biggl[\frac{\kappa\bigl((\widetilde{\psi}_{j}^{-1})^{1}(x)\bigr)}{1-(\widetilde{\psi}_{j}^{-1})^{2}(x)\kappa\bigl((\widetilde{\psi}_{j}^{-1})^{1}(x)\bigr)}\biggr] \nonumber
\end{align}
and
\begin{align}
    F_{j,n}&\bigl(x,u_{\e}(x),\nabla{}u_{\e}(x)\bigr) \nonumber \\
    &\coloneqq
    -\Bigl[\nabla\widetilde{\psi}_{j}^{-1}(x)^{T}
    \nabla{}(\widetilde{u}_{\e})_{\tau}\bigl(\widetilde{\psi}_{j}^{-1}(x)\bigr)\Bigr]\cdot
    \biggl[\frac{\kappa\bigl((\widetilde{\psi}_{j}^{-1})^{1}(x)\bigr)}
    {1-(\widetilde{\psi}_{j}^{-1})^{2}(x)\kappa\bigl((\widetilde{\psi}_{j}^{-1})^{1}(x)\bigr)}
    \tau\bigl(\widetilde{\psi}_{j}^{-1}(x)\bigr)\biggr] \label{app:def-Fjn} \\
    &\quad+k\Div\biggl(\frac{\kappa\bigl((\widetilde{\psi}_{j}^{-1})^{1}(x)\bigr)(\widetilde{u}_{\e})_{\tau}\bigl(\widetilde{\psi}_{j}^{-1}(x)\bigr)}{1-(\widetilde{\psi}_{j}^{-1})^{2}(x)\kappa\bigl((\widetilde{\psi}_{j}^{-1})^{1}(x)\bigr)}
    \tau\bigl((\widetilde{\psi}_{j}^{-1})^{1}(x)\bigr)\biggr) \nonumber
\end{align}
respectively.
Notice that $F_{j,\tau}$ and $F_{j,n}$, for $j=1,2,\ldots,N$ satisfy
\eqref{eq:carathmapApp} since all constituent terms also satisfy this.

Next, we use the previous computations to show that the extension $U_{\e}$
satisfies a PDE of similar character.
Similar to \cite{bcs24}, we introduce a few preliminary
definitions and calculations before preceding.
First, we notice that we may reduce the problem to demonstrating the glueing for functions whose support is contained in
$\widetilde{\mathcal{U}}_{j}$ for $j=1,2,\ldots,N$ by appealing to a partition of unity subordinate to
$\{\widetilde{\mathcal{U}}_{j}\}_{j=1}^{N}$.
In particular, we may assume that our test function satisfies $v\in{}H_{0}^{1}(\widetilde{\mathcal{U}}_{j};\mathbb{R}^{2})$ for some
$j=1,2,\ldots,N$.
Next, for $j=1,2,\ldots,N$ we define the function
$\sigma_{j}\colon\widetilde{\mathcal{U}}_{j}\to\widetilde{\mathcal{U}}_{j}$ by
\begin{equation*}
    \sigma_{j}(x)\coloneqq
    \begin{cases}
        x& \text{for }x\in\widetilde{\mathcal{U}}_{j}\cap\Omega,\\
        R(x)&
        \text{for }x\in\widetilde{\mathcal{U}}_{j}\setminus\Omega,
    \end{cases}
\end{equation*}
where $R$ is as defined in \eqref{eq:BdMirror}.
This function will be used on the enlarged coordinate chart $\widetilde{\mathcal{U}}_{j}$ to swap between an exterior point to $\Omega$ into its interior counterpart while leaving interior points invariant.
This function will be imperative for the glueing argument as we
intend to make use of the structure of the extension as well as
information valid in the interior.
We also introduce, for $j=1,2,\ldots,N$,
the function $\mathfrak{R}_{j}$, defined on
$\widetilde{\mathcal{U}}_{j}\times\mathbb{R}^{2}$ by
\begin{equation*}
    \mathfrak{R}_{j}(x,z)\coloneqq
    \Bigl[\tau\bigl((\widetilde{\psi}_{j}^{-1})^{1}(x)\bigr)
    \tau\bigl((\widetilde{\psi}_{j}^{-1})^{1}(x)\bigr)^{T}-
    n\bigl((\widetilde{\psi}_{j}^{-1})^{1}(x)\bigr)
    n\bigl((\widetilde{\psi}_{j}^{-1})^{1}(x)\bigr)^{T}
    \Bigr]z,
\end{equation*}
which corresponds to the function $R$ written in the original coordinates.
We observe that
\begin{equation*}
    \nabla\sigma_{j}(\sigma_{j}(x))\nabla\sigma_{j}(\sigma_{j}(x))^{T}=
    \begin{cases}
        I_{2}&\text{for }x\in\widetilde{\mathcal{U}}_{j}\cap\Omega,\\
        \nabla\widetilde{\psi}_{j}\bigl(\widetilde{\psi}_{j}^{-1}(x)\bigr)
        \mathcal{M}(x)
        \nabla\widetilde{\psi}_{j}\bigl(\widetilde{\psi}_{j}^{-1}(x)\bigr)^{T}&\text{for }x\in\widetilde{\mathcal{U}}_{j}\setminus\Omega,
    \end{cases}
\end{equation*}
where
\begin{equation*}
    \mathcal{M}(x)\coloneqq
    \begin{pmatrix}
            \frac{1}{\bigl(1-|(\widetilde{\psi}_{j}^{-1})^{2}(x)|
            \kappa((\widetilde{\psi}_{j}^{-1})^{1}(x))\bigr)^{2}}& 0\\
            0& 1
    \end{pmatrix}.
\end{equation*}
Corresponding to this matrix we introduce the inner product defined for $x\in\widetilde{\mathcal{U}}_{j}$, for $j=1,2,\ldots,N$, by
\begin{equation*}
    \bigl<v,w\bigr>_{j}\coloneqq{}
    |\det(\nabla\sigma_{j}(x))|
    v^{T}\nabla\sigma_{j}\bigl(\sigma_{j}(x)\bigr)
    \nabla\sigma_{j}\bigl(\sigma_{j}(x)\bigr)^{T}w
\end{equation*}
where $v,w\in\mathbb{R}^{2}$.
This inner product will enter when verifying that the PDEs glue properly and to simplify the notation hereafter.
In addition, we introduce the following \emph{distortion factor}
\begin{equation*}
    \mathcal{D}_{j}(x)\coloneqq
    \begin{cases}
        1& \text{for }x\in\widetilde{\mathcal{U}}_{j}\cap\Omega,\\
        \frac{1-(\widetilde{\psi}_{j}^{-1})^{2}(x)\kappa((\widetilde{\psi}_{j}^{-1})^{1}(x))}
        {1+(\widetilde{\psi}_{j}^{-1})^{2}(x)\kappa((\widetilde{\psi}_{j}^{-1})^{1}(x))},
        &\text{for }x\in\widetilde{\mathcal{U}}_{j}\setminus\Omega,
    \end{cases}
\end{equation*}
that accounts for the deformation due to the change of variables from outside to the inside of the domain.
Using this distortion factor, we define
\begin{equation*}
    \Curj(w)(x)\coloneqq
    |\det(\nabla\sigma_{j}(x))|^{\frac{1}{2}}
    \biggl[
    \partial_{n}\Bigl(w(x)\cdot\tau\bigl((\widetilde{\psi}_{j}^{-1})^{1}(x)\bigr)\Bigr)
    -\mathcal{D}_{j}(x)\partial_{\tau}\Bigl(w(x)\cdot{}n\bigl((\widetilde{\psi}_{j}^{-1})^{1}(x)\bigr)\Bigr)
    \biggr]
\end{equation*}
for $x\in\widetilde{\mathcal{U}}_{j}$
for $j=1,2,\ldots,N$ and functions,
$w$, of appropriate regularity.
In particular, we use this notation to denote a quantity resembling curl
but including a compensating distortion factor.
Finally, for notational convenience, we let
$\mathcal{G}_{j}\colon\widetilde{\Omega}\times{}B_{r_{1}}(0)
\to{}M_{2\times2}(\mathbb{R})$, for $j=1,2,\ldots,N$,
denote the matrix-valued functions given by
\begin{equation*}
    \mathcal{G}_j(x,y)\coloneqq
    \begin{pmatrix}
        \frac{1}{(1-y_{2}\kappa(y_{1}))^{2}}& 0\\
        0& 1
    \end{pmatrix}
    \, ,
\end{equation*}
where $\kappa(y_1)$ denotes the curvature of $\Gamma$ at the point
satisfying $x=\widetilde{\psi}_j(y_1,0)$.
With this notation in place we are ready to glue the PDEs together.
Using the local tangent-normal decompositions for $U_{\e}$ and $v$ for
$x\in\widetilde{\mathcal{U}}_{j}\setminus\Omega$ we have that
\begin{align*}
    &\sum_{i=1}^{2}
    \Bigl[\nabla\sigma_{j}\bigl(\sigma_{j}(x)\bigr)
    \nabla\sigma_{j}\bigl(\sigma_{j}(x)\bigr)^{T}
    \nabla{}U_{\e}^{i}(x)\Bigr]\cdot{}\nabla{}v^{i}(x)\\
    =&\nabla(\widetilde{u}_{\e})_{\tau}\bigl(\widetilde{\psi}_{j}^{-1}(R(x))\bigr)^{T}
    \bigl[[I_{2}-2\mathbf{e}_{2}\mathbf{e}_{2}^{T}]\mathcal{G}_{j}(x,\widetilde{\psi}_{j}^{-1}(R(x)))\bigr]
    \nabla\widetilde{v}_{\tau}\bigl(\widetilde{\psi}_{j}^{-1}(x)\bigr)\\
    &-\nabla\widetilde({u}_{\e})_{n}\bigl(\widetilde{\psi}_{j}^{-1}(R(x))\bigr)^{T}
    \bigl[[I_{2}-2\mathbf{e}_{2}\mathbf{e}_{2}^{T}]\mathcal{G}_{j}(x,\widetilde{\psi}_{j}^{-1}(R(x)))\bigr]
    \nabla\widetilde{v}_{n}\bigl(\widetilde{\psi}_{j}^{-1}(x)\bigr)\\
    &+\kappa\bigl((\widetilde{\psi}_{j}^{-1})^{1}(x)\bigr)
    \widetilde{v}_{n}\bigl(\widetilde{\psi}_{j}^{-1}(x)\bigr)
    \nabla(\widetilde{u}_{\e})_{\tau}\bigl(\widetilde{\psi}_{j}^{-1}(R(x))\bigr)^{T}
    \bigl[[I_{2}-2\mathbf{e}_{2}\mathbf{e}_{2}^{T}]\mathcal{G}_{j}(x,\widetilde{\psi}_{j}^{-1}(R(x)))\bigr]
    \mathbf{e}_{1}\\
    &-\kappa\bigl((\widetilde{\psi}_{j}^{-1})^{1}(x)\bigr)
    \widetilde{v}_{\tau}\bigl(\widetilde{\psi}_{j}^{-1}(x)\bigr)
    \nabla(\widetilde{u}_{\e})_{n}\bigl(\widetilde{\psi}_{j}^{-1}(R(x))\bigr)^{T}
    \bigl[[I_{2}-2\mathbf{e}_{2}\mathbf{e}_{2}^{T}]\mathcal{G}_{j}(x,\widetilde{\psi}_{j}^{-1}(R(x)))\bigr]
    \mathbf{e}_{1}\\
    &-\sum_{i=1}^{2}
    \frac{\kappa\bigl((\widetilde{\psi}_{j}^{-1})^{1}(x)\bigr)
    n^{i}\bigl((\widetilde{\psi}_{j}^{-1})^{1}(x)\bigr)}
    {\bigl(1+(\widetilde{\psi}_{j}^{-1})^{2}(x)\kappa\bigl((\widetilde{\psi}_{j}^{-1})^{1}(x)\bigr)\bigr)^{2}}
    (\widetilde{u}_{\e})_{\tau}\bigl(\widetilde{\psi}_{j}^{-1}(R(x))\bigr)
    \Bigl[
    \nabla\widetilde{\psi}_{j}\bigl(\widetilde{\psi}_{j}^{-1}(x)\bigr)\mathbf{e}_{1}\Bigr]\cdot{}\nabla{}v(x)\\
    &+\sum_{i=1}^{2}
    \frac{\kappa\bigl((\widetilde{\psi}_{j}^{-1})^{1}(x)\bigr)
    \tau^{i}\bigl((\widetilde{\psi}_{j}^{-1})^{1}(x)\bigr)}
    {\bigl(1+(\widetilde{\psi}_{j}^{-1})^{2}(x)\kappa\bigl((\widetilde{\psi}_{j}^{-1})^{1}(x)\bigr)\bigr)^{2}}
    (\widetilde{u}_{\e})_{n}\bigl(\widetilde{\psi}_{j}^{-1}(R(x))\bigr)
    \Bigl[
    \nabla\widetilde{\psi}_{j}\bigl(\widetilde{\psi}_{j}^{-1}(x)\bigr)\mathbf{e}_{1}\Bigr]\cdot{}\nabla{}v(x).
\end{align*}
A similar expression holds over $\widetilde{\mathcal{U}}_{j}\cap\Omega$
due to the earlier calculation for $u_{\e}$.
Notice that the last two terms, containing a gradient of $v$ but not of
$u_{\e}$, will have no 
boundary terms after integrating by parts since
$(\widetilde{u}_{\e})_{n}=0$
on $\Gamma$, $v=0$ on $\partial\widetilde{\mathcal{U}}_{j}$, and since
the integrals involving $(\widetilde{u}_{\e})_{\tau}$ and $v$ will cancel after
an integration by parts.
Multiplying by $|\det(\nabla\sigma_{j}(x))|$, integrating over
$\widetilde{\mathcal{U}}_{j}\setminus\Omega$, applying the Change of
Variables Theorem with $\sigma_{j}$, and using that $\sigma_{j}^{2}=\text{Id}$
we obtain, for the highest order terms, that
\begin{align*}
    &
    \int_{\widetilde{\mathcal{U}}_{j}\setminus\Omega}\!{}|\det(\nabla\sigma_{j}(x))|
    \nabla(\widetilde{u}_{\e})_{\tau}\bigl(\widetilde{\psi}_{j}^{-1}(R(x))\bigr)^{T}
    \bigl[[I_{2}-2\mathbf{e}_{2}\mathbf{e}_{2}^{T}]\mathcal{G}_{j}(x,\widetilde{\psi}_{j}^{-1}(R(x)))\bigr]
    \nabla\widetilde{v}_{\tau}\bigl(\widetilde{\psi}_{j}^{-1}(x)\bigr)\\
    &-
    \int_{\widetilde{\mathcal{U}}_{j}\setminus\Omega}\!{}|\det(\nabla\sigma_{j}(x))|
    \nabla(\widetilde{u}_{\e})_{n}\bigl(\widetilde{\psi}_{j}^{-1}(R(x))\bigr)^{T}
    \bigl[[I_{2}-2\mathbf{e}_{2}\mathbf{e}_{2}^{T}]\mathcal{G}_{j}(x,\widetilde{\psi}_{j}^{-1}(R(x)))\bigr]
    \nabla\widetilde{v}_{n}\bigl(\widetilde{\psi}_{j}^{-1}(x)\bigr)\\
    =&\int_{\widetilde{\mathcal{U}}_{j}\cap\Omega}\!{}
    \nabla(\widetilde{u}_{\e})_{\tau}\bigl(\widetilde{\psi}_{j}^{-1}(x)\bigr)
    \bigl[[I_{2}-2\mathbf{e}_{2}\mathbf{e}_{2}^{T}]\mathcal{G}_{j}(x,\widetilde{\psi}_{j}^{-1}(x))\bigr]
    \nabla\widetilde{v}_{\tau}\bigl(\widetilde{\psi}_{j}^{-1}(R(x))\bigr)\\
    &-\int_{\widetilde{\mathcal{U}}_{j}\cap\Omega}\!{}
    \nabla(\widetilde{u}_{\e})_{n}\bigl(\widetilde{\psi}_{j}^{-1}(x)\bigr)
    \bigl[[I_{2}-2\mathbf{e}_{2}\mathbf{e}_{2}^{T}]\mathcal{G}_{j}(x,\widetilde{\psi}_{j}^{-1}(x))\bigr]
    \nabla\widetilde{v}_{n}\bigl(\widetilde{\psi}_{j}^{-1}(R(x))\bigr).
\end{align*}
Noticing that
\begin{equation*}
    \widetilde{\psi}_{j}^{-1}(R(x))=
    [I_{2}-\mathbf{e}_{2}\mathbf{e}_{2}^{T}]
    \widetilde{\psi}_{j}^{-1}(x)
\end{equation*}
we can rewrite the previous equation as
\begin{align*}
    &\int_{\widetilde{\mathcal{U}}_{j}\cap\Omega}\!{}
    \nabla(\widetilde{u}_{\e})_{\tau}\bigl(\widetilde{\psi}_{j}^{-1}(x)\bigr)
    \bigl[\mathcal{G}_{j}(x,\widetilde{\psi}_{j}^{-1}(x))\bigr]
    \nabla(\widetilde{v}^{R})_{\tau}\bigl(\widetilde{\psi}_{j}^{-1}(x)\bigr)\\
    &-\int_{\widetilde{\mathcal{U}}_{j}\cap\Omega}\!{}
    \nabla(\widetilde{u}_{\e})_{n}\bigl(\widetilde{\psi}_{j}^{-1}(x)\bigr)
    \bigl[\mathcal{G}_{j}(x,\widetilde{\psi}_{j}^{-1}(x))\bigr]
    \nabla(\widetilde{v}^{R})_{n}\bigl(\widetilde{\psi}_{j}^{-1}(x)\bigr)
\end{align*}
where for a function
$w\in{}H^{1}\bigl(B_{r_{1}}(0);\mathbb{R}^{2}\bigr)$ we have set
$w^{R}(y)\coloneqq{}w\bigl([I_{2}-2\mathbf{e}_{2}\mathbf{e}_{2}^{T}]y\bigr)$.
Next, we compute $\Curj$ in tangent-normal coordinates for
$x\in\widetilde{\mathcal{U}}_{j}$ to obtain
$\widetilde{\mathcal{U}}_{j}\setminus\Omega$, that
\begin{align*}
     \Curj(U_{\e})(x)=&
    |\det(\nabla\sigma_{j}(x))|^{\frac{1}{2}}\biggl[
    \partial_{n}\Bigl[
    (\widetilde{u}_{\e})_{\tau}\bigl(\widetilde{\psi}_{j}^{-1}(R(x))\bigr)\Bigr]
    +\mathcal{D}_{j}(x)
    \partial_{\tau}\Bigl[
    (\widetilde{u}_{\e})_{n}\bigl(\widetilde{\psi}_{j}^{-1}(R(x))\bigr)\Bigr]
    \biggr]\\
    =&|\det(\nabla\sigma_{j}(x))|^{\frac{1}{2}}\biggl[
    \partial_{y_{2}}(\widetilde{u}_{\e})_{\tau}\bigl(\widetilde{\psi}_{j}^{-1}(R(x))\bigr)+
    \frac{\partial_{y_{1}}(\widetilde{u}_{\e})_{n}\bigl(\widetilde{\psi}_{j}^{-1}(R(x))\bigr)}
    {1+(\widetilde{\psi}_{j}^{-1})^{2}(x)\kappa\bigl((\widetilde{\psi}_{j}^{-1})^{1}(x)\bigr)}
    \biggr].
\end{align*}
Note that since nothing specific to the minimizer was used in
this computation then a similar computation holds for $v$.
Using this we see, after integrating over
$\widetilde{\mathcal{U}}_{j}\setminus\Omega$, that
\begin{equation*}
    \int_{\widetilde{\mathcal{U}}_{j}\setminus\Omega}\!{}
    \Curj(U_{\e})\Curj(v)
    =\int_{\widetilde{\mathcal{U}}_{j}\setminus\Omega}\!{}
    |\det(\nabla\sigma_{j}(x))|^{\frac{1}{2}}\biggl[
    \partial_{y_{2}}(\widetilde{u}_{\e})_{\tau}\bigl(\widetilde{\psi}_{j}^{-1}(R(x))\bigr)+
    \frac{\partial_{y_{1}}(\widetilde{u}_{\e})_{n}\bigl(\widetilde{\psi}_{j}^{-1}(R(x))\bigr)}
    {1+(\widetilde{\psi}_{j}^{-1})^{2}(x)\kappa\bigl((\widetilde{\psi}_{j}^{-1})^{1}(x)\bigr)}
    \biggr]
    \Curj(v).
\end{equation*}
Next, introducing a change of variables using $\sigma_{j}$ leads to
\begin{align*}
    &\int_{\widetilde{\mathcal{U}}_{j}\cap\Omega}\!{}
    |\det(\nabla\sigma_{j}(x))|^{\frac{-1}{2}}
    \biggl[
    \partial_{y_{2}}(\widetilde{u}_{\e})_{\tau}\bigl(\widetilde{\psi}_{j}^{-1}(x))+
    \frac{\partial_{y_{1}}(\widetilde{u}_{\e})_{n}\bigl(\widetilde{\psi}_{j}^{-1}(x)\bigr)}
    {1-(\widetilde{\psi}_{j}^{-1})^{2}(x)\kappa\bigl((\widetilde{\psi}_{j}^{-1})^{1}(x)\bigr)}
    \bigr)
    \biggr]
    \Curj(v)(\sigma_{j}(x))\\
    =&-\int_{\widetilde{\mathcal{U}}_{j}\cap\Omega}\!{}
    \biggl[
    \partial_{y_{2}}(\widetilde{u}_{\e})_{\tau}\bigl(\widetilde{\psi}_{j}^{-1}(x)\bigr)+
    \frac{\partial_{y_{1}}(\widetilde{u}_{\e})_{n}\bigl(\widetilde{\psi}_{j}^{-1}(x)\bigr)}
    {1-(\widetilde{\psi}_{j}^{-1})^{2}(x)\kappa\bigl((\widetilde{\psi}_{j}^{-1})^{1}(x)\bigr)}
    \biggr]
    \partial_{y_{2}}\widetilde{v}_{\tau}\bigl(\widetilde{\psi}_{j}^{-1}(R(x))\bigr)
    \\
    &+\int_{\widetilde{\mathcal{U}}_{j}\cap\Omega}\!{}
    \biggl[
    \partial_{y_{2}}(\widetilde{u}_{\e})_{\tau}\bigl(\widetilde{\psi}_{j}^{-1}(x)\bigr)+
    \frac{\partial_{y_{1}}(\widetilde{u}_{\e})_{n}\bigl(\widetilde{\psi}_{j}^{-1}(x)\bigr)}
    {1-(\widetilde{\psi}_{j}^{-1})^{2}(x)\kappa\bigl((\widetilde{\psi}_{j}^{-1})^{1}(x)\bigr)}
    \biggr]
    \frac{\partial_{y_{1}}\widetilde{v}_{n}\bigl(\widetilde{\psi}_{j}^{-1}(R(x))\bigr)}{1-(\widetilde{\psi}_{j}^{-1})^{2}(x)\kappa\bigl((\widetilde{\psi}_{j}^{-1})^{1}(x)\bigr)}.
\end{align*}
Notice that we may rewrite this as
\begin{align*}
    -&\int_{\widetilde{\mathcal{U}}_{j}\cap\Omega}\!{}
    \biggl[
    \partial_{y_{2}}(\widetilde{u}_{\e})_{\tau}\bigl(\widetilde{\psi}_{j}^{-1}(x)\bigr)+
    \frac{\partial_{y_{1}}(\widetilde{u}_{\e})_{n}\bigl(\widetilde{\psi}_{j}^{-1}(x)\bigr)}
    {1-(\widetilde{\psi}_{j}^{-1})^{2}(x)\kappa\bigl((\widetilde{\psi}_{j}^{-1})^{1}(x)\bigr)}
    \biggr]
    \partial_{y_{2}}(\widetilde{v}^{R})_{\tau}\bigl(\widetilde{\psi}_{j}^{-1}(x)\bigr)
    \\
    &+\int_{\widetilde{\mathcal{U}}_{j}\cap\Omega}\!{}
    \biggl[
    \partial_{y_{2}}(\widetilde{u}_{\e})_{\tau}\bigl(\widetilde{\psi}_{j}^{-1}(x)\bigr)+
    \frac{\partial_{y_{1}}(\widetilde{u}_{\e})_{n}\bigl(\widetilde{\psi}_{j}^{-1}(x)\bigr)}
    {1-(\widetilde{\psi}_{j}^{-1})^{2}(x)\kappa\bigl((\widetilde{\psi}_{j}^{-1})^{1}(x)\bigr)}
    \biggr]
    \frac{\partial_{y_{1}}(\widetilde{v}^{R})_{\tau}\bigl(\widetilde{\psi}_{j}^{-1}(x)\bigr)}
    {1-(\widetilde{\psi}_{j}^{-1})^{2}(x)\kappa\bigl((\widetilde{\psi}_{j}^{-1})^{1}(x)\bigr)}.
\end{align*}
Finally, we notice that by a change of variables using
$\sigma_{j}$ we obtain
\begin{align*}
    &k\int_{\widetilde{\mathcal{U}}_{j}\setminus\Omega}\!{}
    |\det(\nabla\sigma_{j}(x))|\mathcal{D}_{j}(x)
    \biggl[\frac{\kappa\bigl((\widetilde{\psi}_{j}^{-1})^{1}(x)\bigr)(\widetilde{U}_{\e})_{\tau}\bigl(\widetilde{\psi}_{j}^{-1}(x)\bigr)}{1-(\widetilde{\psi}_{j}^{-1})^{2}(x)\kappa\bigl((\widetilde{\psi}_{j}^{-1})^{1}(x)\bigr)}\biggr]
    \Bigl[\partial_{y_{2}}\widetilde{v}_{\tau}\bigl(\widetilde{\psi}_{j}^{-1}(x)\bigr)\Bigr]
    \\
    =&k\int_{\widetilde{\mathcal{U}}_{j}\setminus\Omega}\!{}
    |\det(\nabla\sigma_{j}(x))|
    \biggl[\frac{\kappa\bigl((\widetilde{\psi}_{j}^{-1})^{1}(x)\bigr)(\widetilde{u}_{\e})_{\tau}\bigl(\widetilde{\psi}_{j}^{-1}(R(x))\bigr)}{1+(\widetilde{\psi}_{j}^{-1})^{2}(x)\kappa\bigl((\widetilde{\psi}_{j}^{-1})^{1}(x)\bigr)}\biggr]
    \Bigl[\partial_{y_{2}}\widetilde{v}_{\tau}\bigl(\widetilde{\psi}_{j}^{-1}(x)\bigr)\Bigr]
    \\
    =&-k\int_{\widetilde{\mathcal{U}}_{j}\cap\Omega}\!{}
    \biggl[\frac{\kappa\bigl((\widetilde{\psi}_{j}^{-1})^{1}(x)\bigr)(\widetilde{u}_{\e})_{\tau}\bigl(\widetilde{\psi}_{j}^{-1}(x)\bigr)}{1-(\widetilde{\psi}_{j}^{-1})^{2}(x)\kappa\bigl((\widetilde{\psi}_{j}^{-1})^{1}(x)\bigr)}\biggr]
    \Bigl[\partial_{y_{2}}(\widetilde{v}^{R})_{\tau}\bigl(\widetilde{\psi}_{j}^{-1}(x)\bigr)\Bigr].
\end{align*}
Combining our previous work now gives
\begin{align*}
    &\sum_{i=1}^{2}\int_{\widetilde{\mathcal{U}}_{j}}\!{}
    \bigl<\nabla{}U_{\e}^{i},\nabla{}v^{i}\bigr>_{j}
    +k\int_{\widetilde{\mathcal{U}}_{j}}\!{}
    (\Curj U_{\e})(\Curj v)
    \nonumber\\
    &+k\int_{\widetilde{\mathcal{U}}_{j}}\!{}
    |\det(\nabla\sigma_{j}(x))|\mathcal{D}_{j}(x)
    \biggl[\frac{\kappa\bigl((\widetilde{\psi}_{j}^{-1})^{1}(x)\bigr)U_{\e}(x)\cdot\tau\bigl((\widetilde{\psi}_{j}^{-1})^{1}(x)\bigr)}
    {1-(\widetilde{\psi}_{j}^{-1})^{2}(x)\kappa\bigl((\widetilde{\psi}_{j}^{-1})^{1}(x)\bigr)}\biggr]
    \nabla\bigl[v(x)\cdot{}n\bigl(\widetilde{\psi}_{j}^{-1}(x)\bigr)\bigr]\cdot{}n\bigl(\widetilde{\psi}_{j}^{-1}(x)\bigr)
    \nonumber\\
    =&\int_{\widetilde{\mathcal{U}}_{j}\cap\Omega}\!{}
    \nabla(\widetilde{u}_{\e})_{\tau}\bigl(\widetilde{\psi}_{j}^{-1}(x)\bigr)^{T}
    \bigl[\mathcal{G}_{j}(x,\widetilde{\psi}_{j}^{-1}(x))\bigr]
    \bigl[\nabla\widetilde{v}_{\tau}\bigl(\widetilde{\psi}_{j}^{-1}(x)\bigr)+
    \nabla\widetilde{v}_{\tau}^{R}\bigl(\widetilde{\psi}_{j}^{-1}(x)\bigr)\bigr]\\
    &+
    \int_{\widetilde{\mathcal{U}}_{j}\cap\Omega}\!{}
    \nabla(\widetilde{u}_{\e})_{n}\bigl(\widetilde{\psi}_{j}^{-1}(x)\bigr)^{T}
    \bigl[\mathcal{G}_{j}(x,\widetilde{\psi}_{j}^{-1}(x))\bigr]
    \bigl[\nabla\widetilde{v}_{n}\bigl(\widetilde{\psi}_{j}^{-1}(x)\bigr)-
    \nabla\widetilde{v}_{n}^{R}\bigl(\widetilde{\psi}_{j}^{-1}(x)\bigr)\bigr]\\
    &+k\int_{\widetilde{\mathcal{U}}_{j}\cap\Omega}\!{}
    \biggl[
    \partial_{y_{2}}(\widetilde{u}_{\e})_{\tau}\bigl(\widetilde{\psi}_{j}^{-1}(x)\bigr)+
    \frac{\partial_{y_{1}}(\widetilde{u}_{\e})_{n}\bigl(\widetilde{\psi}_{j}^{-1}(x)\bigr)}
    {1-(\widetilde{\psi}_{j}^{-1})^{2}(x)\kappa\bigl((\widetilde{\psi}_{j}^{-1})^{1}(x)\bigr)}
    \biggr]
    \bigl[\partial_{y_{2}}\widetilde{v}_{\tau}\bigl(\widetilde{\psi}_{j}^{-1}(x)\bigr)-\partial_{y_{2}}(\widetilde{v}^{R})_{\tau}\bigl(\widetilde{\psi}_{j}^{-1}(x)\bigr)
    \bigr]
    \\
    &+k\int_{\widetilde{\mathcal{U}}_{j}\cap\Omega}\!{}
    \biggl[
    \partial_{y_{2}}(\widetilde{u}_{\e})_{\tau}\bigl(\widetilde{\psi}_{j}^{-1}(x)\bigr)+
    \frac{\partial_{y_{1}}(\widetilde{u}_{\e})_{n}\bigl(\widetilde{\psi}_{j}^{-1}(x)\bigr)}
    {1-(\widetilde{\psi}_{j}^{-1})^{2}(x)\kappa\bigl((\widetilde{\psi}_{j}^{-1})^{1}(x)\bigr)}
    \biggr]
    \frac{\partial_{y_{1}}\widetilde{v}_{\tau}\bigl(\widetilde{\psi}_{j}^{-1}(x)\bigr)+\partial_{y_{1}}(\widetilde{v}^{R})_{\tau}\bigl(\widetilde{\psi}_{j}^{-1}(x)\bigr)}
    {1-(\widetilde{\psi}_{j}^{-1})^{2}(x)\kappa\bigl((\widetilde{\psi}_{j}^{-1})^{1}(x)\bigr)}
    \\
    &+k\int_{\widetilde{\mathcal{U}}_{j}\cap\Omega}\!{}
    \biggl[\frac{\kappa\bigl((\widetilde{\psi}_{j}^{-1})^{1}(x)\bigr)(\widetilde{u}_{\e})_{\tau}\bigl(\widetilde{\psi}_{j}^{-1}(x)\bigr)}{1-(\widetilde{\psi}_{j}^{-1})^{2}(x)\kappa\bigl((\widetilde{\psi}_{j}^{-1})^{1}(x)\bigr)}\biggr]
    \Bigl[\partial_{y_{2}}\widetilde{v}_{\tau}\bigl(\widetilde{\psi}_{j}^{-1}(x)\bigr)
    -\partial_{y_{2}}(\widetilde{v}^{R})_{\tau}\bigl(\widetilde{\psi}_{j}^{-1}(x)\bigr)\Bigr]
    \\
    &+\int_{\widetilde{\mathcal{U}}_{j}\setminus\Omega}\!{}
    \widetilde{F}_{j}\bigl(x,U_{\e}(x),\nabla{}U_{\e}(x)\bigr)\cdot
    v(x).
\end{align*}
Since the operation
\begin{equation*}
    w\mapsto{}\mathfrak{R}_{j}(x,w(\sigma_{j}(x)))
\end{equation*}
is an involution for each
$x\in\widetilde{\mathcal{U}}_{j}$ then
we may define the even part relative to this involution, denoted
$w_{E}$, by
\begin{equation*}
    w_{E}(x)\coloneqq
    \frac{w(x)+\mathfrak{R}_{j}(w(\sigma_{j}(x)))}{2}.
\end{equation*}
Observe that
\begin{equation*}
    (w_{E})_{\tau}(x)=
    \frac{\widetilde{w}_{\tau}(\widetilde{\psi}_{j}^{-1}(x))
    +\widetilde{w}_{\tau}(\widetilde{\psi}_{j}^{-1}(R(x)))}{2},
    \hspace{15pt}
    (w_{E})_{n}(x)=
    \frac{\widetilde{w}_{n}(\widetilde{\psi}_{j}^{-1}(x))
    -\widetilde{w}_{n}(\widetilde{\psi}_{j}^{-1}(R(x)))}{2}
\end{equation*}
and that for $x\in\Gamma$ satisfying
$x=\widetilde{\psi}_{j}(y_{1},0)$
\begin{equation*}
    (w_{E})_{n}(x)
    =\frac{\widetilde{w}_{n}(\widetilde{\psi}_{j}^{-1}(x))
    -\widetilde{w}_{n}(\widetilde{\psi}_{j}^{-1}(x))}{2}
    =\frac{\widetilde{w}_{n}(y_{1},0)
    -\widetilde{w}_{n}(y_{1},0)}{2}
    =0.
\end{equation*}
Thus, $w_{E}$ only has tangential part along $\Gamma$.
Using this notation in the previous calculation combined with
the PDE satisfied by $u_{\e}$ we find that
\begin{align*}
    &
    \sum_{i=1}^{2}\int_{\widetilde{\mathcal{U}}_{j}}\!{}
    \bigl<\nabla{}U_{\e}^{i}(x),\nabla{}v^{i}(x)\bigr>_{j}
    +k\int_{\widetilde{\mathcal{U}}_{j}}\!{}
    \Curj(U_{\e})(x)\Curj(v)(x)\\
    &+k\int_{\widetilde{\mathcal{U}}_{j}}\!{}
    |\det(\nabla\sigma_{j}(x))|\mathcal{D}_{j}(x)
    \biggl[\frac{\kappa\bigl((\widetilde{\psi}_{j}^{-1})^{1}(x)\bigr)U_{\e}(x)\cdot\tau\bigl((\widetilde{\psi}_{j}^{-1})^{1}(x)\bigr)}
    {1-(\widetilde{\psi}_{j}^{-1})^{2}(x)\kappa\bigl((\widetilde{\psi}_{j}^{-1})^{1}(x)\bigr)}\biggr]
    \nabla\bigl[v(x)\cdot{}n\bigl(\widetilde{\psi}_{j}^{-1}(x)\bigr)\bigr]\cdot{}n\bigl(\widetilde{\psi}_{j}^{-1}(x)\bigr)
    \\
    =&2\int_{\widetilde{\mathcal{U}}_{j}\cap\Omega}\!{}
        \frac{U_{\e}\cdot{}v_{E}}{\varepsilon^{2}}(1-|U_{\e}|^{2})
        +\int_{\widetilde{\mathcal{U}}_{j}}\!{}
        \widetilde{\mathcal{F}}_{j}\bigl(x,U_{\e}(x),\nabla{}U_{\e}(x)\bigr)\cdot
        v(x)
\end{align*}
where $\widetilde{\mathcal{F}}_{j}$ combines $F_{j,\tau}$,
$F_{j,n}$, and $\widetilde{F}_{j}$.
Noting that
\begin{equation*}
    U_{\e}(x)\cdot\bigl[\mathfrak{R}_{j}\bigl(x,v(\sigma_{j}(x))\bigr)\bigr]=\mathfrak{R}_{j}(x,U_{\e}(x))\cdot{}v(\sigma_{j}(x))
    \, ,
\end{equation*}
and using the changing variables $\sigma_{j}$ gives
\begin{align*}
    &\sum_{i=1}^{2}\int_{\widetilde{\mathcal{U}}_{j}}\!{}
    \bigl<\nabla{}U_{\e}^{i},\nabla{}v^{i}\bigr>_{j}
    +k\int_{\widetilde{\mathcal{U}}_{j}}\!{}
    (\Curj U_{\e})(\Curj v)
    \nonumber\\
    &+k\int_{\widetilde{\mathcal{U}}_{j}}\!{}
    |\det(\nabla\sigma_{j}(x))|\mathcal{D}_{j}(x)
    \biggl[\frac{\kappa\bigl((\widetilde{\psi}_{j}^{-1})^{1}(x)\bigr)U_{\e}(x)\cdot\tau\bigl((\widetilde{\psi}_{j}^{-1})^{1}(x)\bigr)}
    {1-(\widetilde{\psi}_{j}^{-1})^{2}(x)\kappa\bigl((\widetilde{\psi}_{j}^{-1})^{1}(x)\bigr)}\biggr]
    \nabla\bigl[v(x)\cdot{}n\bigl(\widetilde{\psi}_{j}^{-1}(x)\bigr)\bigr]\cdot{}n\bigl(\widetilde{\psi}_{j}^{-1}(x)\bigr)
    \nonumber\\
    =&\int_{\widetilde{\mathcal{U}}_{j}\cap\Omega}\!{}
    \frac{U_{\e}(x)\cdot{}v(x)}{\varepsilon^{2}}(1-|U_{\e}(x)|^{2})
    \nonumber\\
    &+\int_{\widetilde{\mathcal{U}}_{j}\setminus\Omega}\!{}
    |\det(\nabla\sigma_{j}(x))|
    \frac{\mathfrak{R}_{j}\bigl(x,U_{\e}(x)\bigr)\cdot{}v(x)}{\varepsilon^{2}}(1-|U_{\e}(x)|^{2})
    \nonumber\\
    &+\int_{\widetilde{\mathcal{U}}_{j}}\!{}\widetilde{\mathcal{F}}_{j}
    \bigl(x,U_{\e}(x),\nabla{}U_{\e}(x)\bigr)\cdot{}v(x).
\end{align*}

\bibliographystyle{alpha}
\bibliography{references}

\end{document}